\numberwithin{equation}{section}
\newtheorem{theorem}{Theorem}[section]
\newtheorem{lemma}[theorem]{Lemma}
\newtheorem{proposition}[theorem]{Proposition}
\newtheorem{cor}[theorem]{Corollary}
\newtheorem{rem}[theorem]{Remark}
\renewcommand{\ge}{\geq}
\renewcommand{\le}{\leq}
\newcommand{\ind}{\mathbf{1}}
\newcommand{\E}{\mathbb{E}}
\newcommand{\R}{\mathbb{R}}
\newcommand{\N}{\mathbb{N}}
\renewcommand{\tilde}{\widetilde}
\renewcommand{\hat}{\widehat}
\newcommand{\cT}{{\ensuremath{\mathcal T}} }
\newcommand{\bP}{{\ensuremath{\mathbf P}} }
\newcommand{\bE}{{\ensuremath{\mathbf E}} }
\DeclareMathSymbol{\leqslant}{\mathalpha}{AMSa}{"36} 
\DeclareMathSymbol{\geqslant}{\mathalpha}{AMSa}{"3E} 
\DeclareMathSymbol{\eset}{\mathalpha}{AMSb}{"3F}     
\renewcommand{\leq}{\;\leqslant\;}                   
\renewcommand{\geq}{\;\geqslant\;}                   
\newcommand{\dd}{\,\text{\rm d}}             
\newcommand{\bbE}{{\ensuremath{\mathbb E}} }
\newcommand{\bbL}{{\ensuremath{\mathbb L}} }
\newcommand{\bbN}{{\ensuremath{\mathbb N}} }
\newcommand{\bbP}{{\ensuremath{\mathbb P}} }
\newcommand{\bbR}{{\ensuremath{\mathbb R}} }
\newcommand{\ga}{\alpha}
\newcommand{\gb}{\beta}
\newcommand{\gd}{\delta}
\newcommand{\gep}{\varepsilon}       
\newcommand{\gz}{\zeta}
\newcommand{\gG}{\Gamma}
\newcommand{\go}{\omega}
\newcommand{\gl}{\lambda}
\newcommand{\gs}{\sigma}
\def\captionfont@{\footnotesize}
\def\captionheadfont@{\scshape}
\long\def\@makecaption#1#2{%
  \vspace{2mm}
  \setbox\@tempboxa\vbox{\color@setgroup
    \advance\hsize-6pc\noindent
    \captionfont@\captionheadfont@#1\@xp\@ifnotempty\@xp
        {\@cdr#2\@nil}{.\captionfont@\upshape\enspace#2}%
    \unskip\kern-6pc\par
    \global\setbox\@ne\lastbox\color@endgroup}%
  \ifhbox\@ne 
    \setbox\@ne\hbox{\unhbox\@ne\unskip\unskip\unpenalty\unkern}%
  \fi
  \ifdim\wd\@tempboxa=\z@ 
    \setbox\@ne\hbox to\columnwidth{\hss\kern-6pc\box\@ne\hss}%
  \else 
    \setbox\@ne\vbox{\unvbox\@tempboxa\parskip\z@skip
        \noindent\unhbox\@ne\advance\hsize-6pc\par}%
\fi
  \ifnum\@tempcnta<64 
    \addvspace\abovecaptionskip
    \moveright 3pc\box\@ne
  \else 
    \moveright 3pc\box\@ne
    \nobreak
    \vskip\belowcaptionskip
  \fi
\relax
}
\def\writefig#1 #2 #3 {\rlap{\kern #1 truecm
\raise #2 truecm \hbox{#3}}}
\newcommand{\tf}{\textsc{f}}
\newcommand{\ttau}{\tilde{\tau}}
\newcommand{\tT}{\tilde{T}}
\newcommand{\tga}{\tilde{\alpha}}
\renewcommand{\a}{\mathrm{ann}}
\newcommand{\p}{\mathrm{pin}}
\newcommand{\q}{\mathrm{que}}
\newcommand{\pin}{\mathrm{pin}}
\newcommand{\ann}{\mathrm{ann}}
\newcommand{\htau}{\hat\tau}
\title[pinning model in random correlated environment]{Sharp critical behavior for pinning model in random correlated environment}
\author{Quentin Berger}
\address{
Laboratoire de Physique, ENS Lyon,  Universit\'e de Lyon, 46 All\'ee d'Italie, 
69364 Lyon, France
}
\email{quentin.berger@ens-lyon.fr}
\author{Hubert Lacoin}
\address{CEREMADE, Université Paris Dauphine, Place du Maréchal De Lattre De Tassigny, 75775 Paris Cedex 16 - France}
\email{lacoin@ceremade.dauphine.fr}
\begin{document}

\begin{abstract}
  This article investigates the effect for random pinning models of
  long range power-law decaying correlations in the environment. For a
  particular type of environment based on a renewal construction, we
  are able to sharply describe the phase transition from the
  delocalized phase to the localized one, giving the critical exponent
  for the (quenched) free-energy, and proving that at the critical
  point the trajectories are fully delocalized.  These results
  contrast  with what happens both for  the pure model (i.e. without
  disorder) \cite{Fisher} and for the widely studied case of i.i.d.\ disorder,
  where the relevance or irrelevance of disorder on the critical properties
  is decided via the so-called Harris Criterion \cite{A06,
    AZ08, DGLT07, GLT08,GLT09}.
\\
\\
       2010 \textit{Mathematics Subject Classification: 82D60, 60K37, 60K05}
\\
  \textit{Keywords: Polymer pinning, Quenched Disorder, Free Energy, Correlation, Path behavior.}
\end{abstract}

\maketitle

\section{Introduction}

\subsection{Physical motivations}
The effect of disorder long-range correlations on the critical
properties of a physical system has been well-studied in the physics
literature, historically in \cite{WeinHalp83} for a general class of
models, and in \cite{Usatenko} for the phenomenon we are interested
in: the adsorption of a polymer on a wall or a line.  One example that
arises in nature is the DNA sequence, that has been found
\cite{Li,Peng} to exhibit long-range power-law correlations and it is
thought that some repetitive patterns are responsible for these
correlations. It is
of great interest to analyse how these correlations affect the DNA denaturation process.
 We study here a probabilistic model that represents a
polymer which is pinned on a line that presents strongly correlated
disorder with repetitive (but not periodic) patterns, and we show that,
according to physicists' predictions, the critical properties of the
model are modified with respect to the case where the disorder is independent
at each site of the line.

\subsection{Definition of the model}

%
%

Let $\tau:=\{\tau_n\}_{n\geq0}$ be a recurrent renewal sequence, that
is a sequence of random variables such that $\tau_0:=0$, and
$\{\tau_{i+1}-\tau_{i}\}_{i\geq 0}$ are independent random variables
identically distributed with support in $\N$, with common law (called
inter-arrival distribution) denoted by $K(\cdot)$. The law of $\tau$
is denoted by $\bP$.  We assume that $K(\cdot)$ satisfies
\begin{equation}
 K(n):=\bP(\tau_1=n) = (1+o(1))\frac{c_K}{n^{1+\ga}},
\label{assump:K}
\end{equation} 
for some $\alpha>0$, $\alpha\ne 1$ (the assumption 
$\alpha\ne1$ does not
hide anything  deep but it avoids various technical nuisances).
The fact that the renewal is recurrent simply means that
$K(\infty)=\bP(\tau_1=+\infty)=0$.
We assume also for simplicity that $K(n)>0$ for all $n\in \N$.
We use the notation
\begin{equation}
\bar K(n):=\bP(\tau_1>n)=\sum_{i=n+1}^{\infty} K(i).
\end{equation}
With a slight abuse of notation, $\tau$ also denotes the set $\{ k\in \N \ |\ \tau_n=k \text{ for some } n\}$.
Given a sequence $\go=(\go_n)_{n\in \N}$ of real numbers (the environment), $h\in \bbR$ (the pinning parameter)
and $\gb\ge 0$ (the inverse temperature),
we define the sequence of polymer measures $\bP_{N,h}^{\go,\gb}$, $N\in \N$ as follows
\begin{equation}
 \frac{\dd \bP_{N,h}^{\go,\gb}}{\dd \bP} (\tau) := 
\frac{1}{Z_{N,h}^{\go,\gb}}   \exp\left( \sum_{n=1}^N  (h+\gb\go_n) \ind_{\{n\in\tau\}}\right),
\end{equation}
where 
\begin{equation}
 Z_{N,h}^{\go,\gb}:= \bE\left[ \exp\left( \sum_{n=1}^N  (h+\gb\go_n) \ind_{\{n\in\tau\}}\right) \right]
\end{equation}
is called the \emph{partition function} of the system.

The set $\tau$ can be thought of as the set of return times to its
departure point (call it $0$)  of some random walk $S$ on some state space, say
$\mathbb Z^d$.  The graph of the random walk $(k,S_k)_{k\in[0,N]}$ is
interpreted as a $1$-dimensional polymer chain living in a
$(d+1)$-dimensional space, and interacting with the defect line
$[0,N]\times\{0\}$.  Physically, our modification of $\bP$ corresponds
to giving an energy reward (or penalty, depending on its sign) to the
trajectory $(k,S_k)_{k\in[0,N]}$ when it touches the defect line, at
the times $(\tau_i)_{i\in\N}$. The reward consists of an homogeneous
part: $h$, and an inhomogeneous one: $\gb\go_n$.

\medskip

Our aim is to study the properties of $\tau\cap[0,N]$ under the
polymer measure $\bP_{N,h}^{\go,\gb}$ for large values of $N$.  This
model, known as \textit{inhomogeneous pinning model}, has been studied
in depth in the literature (see \cite{Book, SFLN} for complete
reviews on the subject), in particular in the cases where $\go$ is a
periodic sequence \cite{BG,CGZ1,CGZ2} and where $\go$ is a typical
realization of a sequence of i.i.d.\ variables
\cite{A06,AZ08,DGLT07,GT05,GLT08,GLT09}.  In this paper, we focus on a
particular type of environment $\go$, constructed as follows:

Let $\hat \tau=(\hat \tau_n)_{n\ge 0}$, $\hat \tau_0=0$ be a recurrent
renewal process (let $\hat \bP$ denote its law), with inter-arrival
law $\hat K(\cdot)$ that satisfies
\begin{equation}
  \hat K(n):=\hat \bP(\hat\tau_1=n) = (1+o(1))\frac{\hat c_K}{n^{1+\tilde \ga}},
\end{equation}
for some $\tilde \ga>1$.
These conditions ensure that $\hat \bE[ \hat\tau_1]<\infty$ which is crucial.
Then let 
$(X_i)_{i\ge 1}$ be a sequence of i.i.d.\ random variables (law $\bbP$ independent of $\hat \bP$) satisfying
\begin{equation}
 \bbP(X_i=0)=\bbP(X_i=-1)=1/2
\end{equation}
and set
\begin{equation}
\go_n= X_i, \quad \forall n \in (\tau_{i-1},\tau_{i}].
\label{defenvir}
\end{equation}

For later convenience we may use another construction to get $\go$.
We start from the renewal process $\tilde \tau$ (let $\tilde \bP$ denote its law), with inter-arrival law $\tilde K(\cdot)$ given by
\begin{equation}\label{deftiled}
\tilde \bP (\tilde\tau_1=n):=\tilde K(n):=\sum_{k=1}^\infty 2^{-k}\hat \bP( \hat \tau_k=n).
\end{equation}
One can check (using Proposition \ref{doneybis} in the appendix), that 
\begin{equation}\label{tidd}
  \tilde K(n)= (1+o(1))\frac{2 \hat c_K}{n^{1+\tilde \ga}}.
\end{equation}
Then one sets 
\begin{equation}\label{altern}
    \go_i= \begin{cases}\, \, 0 & \text{ if there exists some } n\geq 0 \text{ such that } i\in( \ttau_{2n}, \ttau_{2n+1}], \\
    -1 & \text{ if there exists some } n\geq 0 \text{ such that } i \in (\ttau_{2n+1}, \ttau_{2n+2}].
 \end{cases}
\end{equation} 
This construction gives an environment with the same law as the first one conditioned to $X_1=0$,
and this conditioning is harmless for our purpose.

\medskip
\begin{rem}\rm
The reason to choose such an environment is that it is a simple framework to study the influence
 of long-range power-law correlations for disordered pinning models.
One can compute the correlation easily: for any $i\in\N$, $k\geq 0$
\begin{equation}
 \mathrm{Cov}(\go_i,\go_{i+k}) = \frac{1}{4} \hat\bP \left( \exists n \in\N ,\  (i;i+k) \in( \hat\tau_{n-1},\hat\tau_n ]^2\right).
\end{equation}
The latter term is equal to
\begin{equation}
 \frac14 \sum_{l=1}^{i-1} \hat\bP(l\in\hat\tau)  \hat \bP(\hat\tau_1 > k+i-l) \stackrel{k\to\infty}{\sim}
       \frac{\hat c_K}{4\tilde\ga} \sum_{l=1}^{i-1} \hat\bP(l\in\hat\tau) (i-l+k)^{-\tilde \alpha}.
\end{equation}
One uses the renewal theorem to get that
$\hat\bP(l\in\hat\tau)\sim_{l\to\infty} \hat\bE[\hat \tau_1]^{-1}$,
so that taking $i$ large, one has that $\mathrm{Cov}(\go_i,\go_{i+k})$ is of order $k^{1-\tilde\ga }$, 
which decays slower and slower as $\tilde\ga$ is taken
close to $1$.

The reason why we impose $\tilde\alpha>1$ is that for $\tilde\alpha<1$ the
model is somewhat trivial. Indeed, in that case, the infinite-volume quenched (averaged) free energy
has the same critical behavior as for the non-disordered model. Moreover,
in this case one loses the ergodicity of the environment sequence and the free energy is no more a self-averaging quantity (i.e. the 
almost sure limit in \eqref{eq:qwert} does not exist).

\end{rem}

\subsection{The homogeneous model}

Before giving our results, we recall some facts about the easier case $\gb=0$, that is called \textit{homogeneous pinning model}.
This model presents the particularity of being exactly solvable (see \cite{Fisher}).
Recall the definition of the polymer measure in this particular case:
\begin{equation}
 \frac{\dd \bP_{N,h}}{\dd \bP} (\tau) := \frac{1}{Z_{N,h}}   \exp\left( \sum_{n=1}^N h \ind_{\{n\in\tau\}}\right),
\end{equation}
where
\begin{equation}
 Z_{N,h}:= \bE\left[ \exp\left( \sum_{n=1}^N  h \ind_{\{n\in\tau\}}\right) \right]
\end{equation}
is the \textsl{partition function}, \textit{i.e.} the normalizing factor that makes $\bP_{N,h}$
a probability measure. The study of the asymptotics of the partition function allows to describe the typical
behavior of $\tau\cap [0,N]$ under $\bP_{N,h}$ for large $N$. We summarize this fact in the following Proposition.

\begin{proposition}[\cite{Book}, Chapter 2]
 The limit 
\begin{equation}
 \tf(h):= \lim_{N \to \infty} \frac{1}{N}\log Z_{N,h}
\end{equation}
exists and is called \textsl{free energy}. Moreover $h\mapsto \tf(h)$ is a non-decreasing convex function, and
$\tf(h)>0$ if and only if $h>0$. One also has the following asymptotics of $\tf(h)$ around $h=0_+$:
\begin{equation}
\label{th:puro}
 \tf(h)= \begin{cases} \frac{\alpha h^{1/\ga}}{\gG(1-\alpha)c_k}(1+o(1)) \text { if } \alpha<1,\\
                       (\bE[\tau_1])^{-1} h(1+o(1)) \text{ if } \alpha>1. 
         \end{cases}
\end{equation}
Moreover, at every point where $\tf$ is differentiable one has 
\begin{equation}
 \lim_{N\to \infty}\frac{1}{N}\bE_{N,h}\left[\sum_{n=1}^N \ind_{\{n\in \tau\}}\right]=\tf'(h).
\end{equation}
\label{homo1}
\end{proposition}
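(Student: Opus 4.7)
The plan is to first establish existence and basic properties of $\tf(h)$ via a standard superadditivity argument, then derive an implicit equation characterising $\tf(h)$ through a generating-function computation, and finally extract the critical asymptotics from that equation. The differentiability statement will follow from general convex analysis.

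For existence, I would introduce the constrained partition function $Z_{N,h}^c:=\bE\!\left[\exp\!\left(\sum_{n=1}^N h\,\ind_{\{n\in\tau\}}\right)\ind_{\{N\in\tau\}}\right]$. Decomposing any trajectory at the renewal $N\in\tau$ and using the Markov property yields $Z_{N+M,h}^c\geq Z_{N,h}^c Z_{M,h}^c$, so Fekete's lemma gives existence of $\lim\frac{1}{N}\log Z_{N,h}^c$. A direct renewal decomposition gives $Z_{N,h}=\sum_{m=0}^N Z_{m,h}^c\,\bar K(N-m)$, which combined with the polynomial decay of $\bar K$ forces $Z_{N,h}$ and $Z_{N,h}^c$ to share the same exponential rate $\tf(h)$. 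Convexity in $h$ follows from the log-sum-exp form of $\log Z_{N,h}$, monotonicity from $\ind_{\{n\in\tau\}}\geq 0$, while the bounds $Z_{N,h}\geq \bar K(N)$ (take $\tau_1>N$) and $Z_{N,h}\leq e^{Nh\vee 0}$ pin down $\tf(0)=0$.

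To obtain the sign $\tf(h)>0\Leftrightarrow h>0$ and a usable representation, decompose $Z_{N,h}^c$ over the number $k$ of renewals in $(0,N]$ and pass to the Laplace transform in $N$:
\begin{equation}
\sum_{N\geq 1}e^{-FN}Z_{N,h}^c \;=\; \frac{e^{h}\Phi(F)}{1-e^{h}\Phi(F)},\qquad \Phi(F):=\sum_{n\geq 1}e^{-Fn}K(n).
\end{equation}
Since $\Phi$ is continuous, strictly decreasing, with $\Phi(0_+)=1$ (recurrence) and $\Phi(\infty)=0$, for each $h>0$ there is a unique $F>0$ solving $e^h\Phi(F)=1$, and this $F$ is the abscissa of convergence of the series, hence equals $\tf(h)$. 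For $h\leq 0$ the same identity has no positive solution, giving $\tf(h)=0$.

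The critical asymptotics then come from the implicit equation $1-e^{-h}=\sum_{n\geq 1}K(n)\bigl(1-e^{-\tf(h)n}\bigr)$. When $\ga>1$, $\bE[\tau_1]<\infty$; dominated convergence plus $1-e^{-x}=x(1+o(1))$ gives the right-hand side $\sim \bE[\tau_1]\,\tf(h)$, whence $\tf(h)\sim h/\bE[\tau_1]$. When $\ga<1$, the sum is governed by the tail: using $K(n)\sim c_K n^{-(1+\ga)}$ and the substitution $u=\tf(h)n$,
\begin{equation}
\sum_{n\geq 1}K(n)\bigl(1-e^{-Fn}\bigr)\;=\;\bigl(1+o(1)\bigr)c_K F^{\ga}\int_0^\infty u^{-(1+\ga)}(1-e^{-u})\,\dd u\;=\;\bigl(1+o(1)\bigr)\frac{c_K\,\gG(1-\ga)}{\ga}F^{\ga},
\end{equation}
and inverting yields the claimed $h^{1/\ga}$ leading order. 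The main technical point, and the only real obstacle, is justifying the passage from the sum to the integral uniformly in small $F$ — this is precisely where the hypothesis $\ga\neq 1$ matters, since otherwise a logarithmic correction would appear.

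Finally, $\bE_{N,h}\!\left[\sum_{n=1}^N \ind_{\{n\in\tau\}}\right]=\partial_h\log Z_{N,h}$, and the convex functions $\frac{1}{N}\log Z_{N,h}$ converge pointwise to the convex function $\tf$; the standard fact that pointwise convergence of convex functions implies convergence of left- and right-derivatives, which coincide at every differentiability point of the limit, delivers the density identity.
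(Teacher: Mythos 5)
Your proposal is correct, but note that the paper does not prove this proposition at all: it is quoted verbatim from \cite{Book} (Chapters 1--2), and the argument you give — supermultiplicativity of the constrained partition function, the renewal/Laplace-transform identity $\sum_N e^{-FN}Z^c_{N,h}=e^h\Phi(F)/(1-e^h\Phi(F))$, the implicit equation $\Phi(\tf(h))=e^{-h}$, and the Abelian asymptotics of $1-\Phi(F)$ — is precisely the standard proof from that reference, so there is nothing to compare against in the paper itself. All the steps check out, including the identification of the root of $e^h\Phi(F)=1$ with the abscissa of convergence and the convex-analysis argument for the contact-fraction identity. One small remark: your computation for $\ga<1$ correctly yields $\tf(h)=\bigl(\tfrac{\ga}{c_K\gG(1-\ga)}\bigr)^{1/\ga}h^{1/\ga}(1+o(1))$, whereas the constant displayed in \eqref{th:puro} is written without the exponent $1/\ga$; your version is the right one, and only the leading order $h^{1/\ga}$ is used elsewhere in the paper.
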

The above result implies that the number of contact points $|\tau \cap [0,N]|$ under the polymer measure is 
of order $N$ for $h>0$ (and also for $h=0$, $\alpha>1$ by the renewal Theorem \cite[Chapter 1, Theorem 2.2]{Asmuss})
 and $o(N)$ in the other cases.
In fact one can get a more precise statement.

\begin{proposition}[\cite{Book}]\label{homo2} (Asymptotic behavior of the path measure)

\begin{itemize}
\item When $h<0$, for all $k$ one has
\begin{equation}
 \lim_{N\to \infty}\bP_{N,h}(|\tau \cap[0,N]|=k+1)= (1-e^{h}) e^{kh}.
\end{equation}

\item When $h=0$, and $\alpha\in(0,1)$ one has that under $\bP=\bP_{N,h=0}$
\begin{equation}
 N^{-\alpha} |\tau\cap [0,N]| \Rightarrow \mathcal A_{\alpha},
\end{equation}
where $A_{\alpha}$ is the inverse of an $\alpha$-stable law.
\end{itemize}
\end{proposition}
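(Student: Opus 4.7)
Set $L_N := \sum_{n=1}^N \ind_{\{n \in \tau\}}$, so that $|\tau \cap [0,N]| = L_N+1$ (since $0 \in \tau$), and write $\bP_{N,h}(L_N = k) = e^{hk}\bP(L_N = k)/Z_{N,h}$. For the first assertion, the plan is to show separately that $\bP(L_N = k) \sim \bar K(N)$ for each fixed $k$ as $N \to \infty$, and that $Z_{N,h}/\bar K(N) \to 1/(1-e^h)$; the geometric limit then drops out. The first estimate follows from the telescoping identity
$$\bP(L_N = k) = \bP(\tau_{k+1}>N) - \bP(\tau_k > N)$$
combined with the classical one-big-jump asymptotic $\bP(\tau_k > N) = k\bar K(N)(1+o(1))$, which is standard for i.i.d.\ sums with regularly varying tails under \eqref{assump:K}.

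For the partition function, a single Abel summation on $Z_{N,h} = \sum_k e^{hk}\bP(L_N=k)$ gives
$$Z_{N,h} = (e^{-h}-1)\sum_{k \ge 1}e^{hk}\,\bP(\tau_k > N),$$
and the task is to pass to the limit inside the sum by dominated convergence. The union-bound inclusion $\{\tau_k > N\} \subseteq \{\max_{i\le k}(\tau_i-\tau_{i-1}) > N/k\}$ yields $\bP(\tau_k > N) \le k\bar K(N/k) \le C k^{1+\alpha}\bar K(N)$, the second inequality being the regular variation of $\bar K$. Since $h<0$, the envelope $k \mapsto C k^{1+\alpha}e^{hk}$ is summable, so term-by-term passage is legitimate and produces $Z_{N,h}/\bar K(N) \to (e^{-h}-1)\sum_{k \ge 1}k e^{hk} = 1/(1-e^h)$. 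Combining the two estimates yields $\bP_{N,h}(L_N = k) \to (1-e^h)e^{hk}$.

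For the second assertion, $\bP_{N,h=0}$ is exactly the renewal law $\bP$, so the claim is the classical stable-renewal scaling. Using the duality $\{L_N \le k\} = \{\tau_{k+1}>N\}$ together with the stable limit theorem $\tau_k/k^{1/\alpha} \Rightarrow \mathcal{S}_\alpha$ (a positive $\alpha$-stable law) applied with $k = \lfloor xN^\alpha\rfloor$, one obtains
$$\bP(L_N/N^\alpha \le x) = \bP(\tau_{\lfloor xN^\alpha\rfloor+1}>N) \longrightarrow \bP(\mathcal{S}_\alpha > x^{-1/\alpha}) = \bP(\mathcal{S}_\alpha^{-\alpha} \le x),$$
which identifies $\mathcal{A}_\alpha$ with $\mathcal{S}_\alpha^{-\alpha}$, in agreement with the description as the inverse of an $\alpha$-stable law.

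The only mildly delicate point is the uniform-in-$k$ control required for dominated convergence in the first part; the union-bound / regular-variation estimate above handles this cleanly for any $\alpha > 0$, $\alpha \ne 1$. The second part is a repackaging of the classical stable renewal scaling theorem and presents no real difficulty.
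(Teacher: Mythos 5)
Your proposal is correct; note that the paper offers no proof of Proposition \ref{homo2} at all (it is quoted from \cite{Book}), and your argument --- Abel summation of $Z_{N,h}=\sum_k e^{hk}\bP(|\tau\cap[0,N]|=k+1)$ against the one-big-jump asymptotic $\bP(\tau_k>N)\sim k\bar K(N)$ for $h<0$, and the duality $\{|\tau\cap[0,N]|\le k+1\}=\{\tau_{k+1}>N\}$ with the stable limit theorem for $h=0$ --- is essentially the classical route taken in that reference. The only point worth tightening is the domination envelope: the bound $k\bar K(N/k)\le Ck^{1+\alpha}\bar K(N)$ via regular variation applies for $k\le N$, while for $k>N$ one should instead use the trivial bound $\bP(\tau_k>N)\le 1$ together with $\bar K(N)\ge cN^{-\alpha}$, which gives the same summable envelope.
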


\subsection{Preliminary results on the disordered model}
This paper presents results for our inhomogeneous model that exhibits sharp
contrast with Proposition \ref{homo1} and \ref{homo2}. We show that
disorder modifies the phase transition between the localized phase
(order $N$ contacts, positive free energy), and the delocalized phase,
($O(1)$ contacts, zero free energy). Due to the correlations present in the
environment, this phenomenon is very different from what was observed
for the i.i.d.\ environment case.


In order to state our results, we first need to show the existence of
the free energy for the inhomogeneous model.

\begin{proposition}
\label{existF}
The limit
\begin{equation}
\label{eq:qwert}
\tf(\gb,h):= \lim_{N\to\infty} \frac{1}{N} \log Z_{N,h}^{\go,\gb},
\end{equation}
exists $\bbP\times \hat \bP$ almost surely. One has
\begin{equation}
\tf(\gb,h)=\lim_{N\to\infty} \frac{1}{N} \hat \bE \ \bbE \left[\log Z_{N,h}^{\go,\gb}\right].
\end{equation}
The function $h\to \tf(\gb,h)$ is non-decreasing, non-negative and convex.
At every point where $\tf$ has a derivative one has $\bbP\times \hat \bP$ a.s.
\begin{equation}
 \lim_{N\to \infty}\frac{1}{N}\bE^{\omega,\beta}_{N,h}\left[\sum_{n=1}^N \ind_{\{n\in \tau\}}\right]=\frac{\partial}{\partial h}\tf(\gb,h).
\end{equation}
\end{proposition}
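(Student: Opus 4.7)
The proof follows the standard Fekete / subadditive-ergodic scheme used for pinning free energies; the only genuine novelty is that $\go$ is strongly correlated rather than i.i.d. The starting point is the contact (constrained) partition function
\[
Z_{N,h}^{\go,\gb,c} \;:=\; \bE\Bigl[\exp\Bigl(\sum_{n=1}^N (h+\gb\go_n)\ind_{\{n\in\tau\}}\Bigr)\ind_{\{N\in\tau\}}\Bigr].
\]
Restricting the sum that defines $Z_{N+M,h}^{\go,\gb,c}$ to trajectories satisfying $N\in\tau$ and factoring at $N$ yields the super-multiplicative bound
\[
Z_{N+M,h}^{\go,\gb,c}\;\geq\; Z_{N,h}^{\go,\gb,c}\cdot Z_{M,h}^{\theta_N\go,\gb,c},
\]
where $(\theta_N\go)_k := \go_{N+k}$. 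I would then extend $\go$ to a two-sided stationary process by replacing $\hat\tau$ by its stationary (delayed) version on $\Z$, which is well-defined precisely because $\tilde\ga>1$ forces $\hat\bE[\hat\tau_1]<\infty$. The pair (environment value, current age in the $\hat\tau$-block) is then a stationary Markov chain readily checked to be ergodic, so $\go$ itself is stationary and ergodic under the shift $\theta$; the one-sided environment appearing in the statement couples with the stationary one so as to coincide after some a.s.\ finite time, and therefore yields the same $N^{-1}\log Z_N$ asymptotics. Letting $F_N(\go):=-\log Z_N^{\go,\gb,c}$, one has a subadditive cocycle $F_{N+M}(\go)\leq F_N(\go)+F_M(\theta_N\go)$, together with the deterministic bounds $-(|h|+\gb)N\leq F_N\leq (1+\ga)\log N + C$ coming from $Z_N^c\leq e^{(|h|+\gb)N}$ and $Z_N^c\geq K(N)e^{-|h|-\gb}$. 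Kingman's subadditive ergodic theorem then gives a.s.\ and $L^1$ convergence of $N^{-1}\log Z_N^{\go,\gb,c}$ to a deterministic constant that I call $\tf(\gb,h)$.

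To pass from $Z_N^c$ to the free partition function $Z_N$, note that $Z_N\geq Z_N^c$ is immediate, while the renewal decomposition $Z_N = \sum_{k=0}^N Z_k^{\go,\gb,c}\,\bar K(N-k)$, combined with the elementary lower bound $Z_N^c\geq K(N-k)\,e^{-|h|-\gb}\,Z_k^c$ (forcing the jump of $\tau$ after $k$ to land exactly at $N$), yields $Z_k^c\leq C N^{1+\ga}Z_N^c$ and hence $Z_N\leq C\,N^{2+\ga}Z_N^c$. Therefore $|\log Z_N-\log Z_N^c|=O(\log N)$ deterministically, so $N^{-1}\log Z_N$ shares the same a.s.\ and $L^1$ limit $\tf(\gb,h)$; in particular $\tf(\gb,h)=\lim N^{-1}\hat\bE\,\bbE[\log Z_{N,h}^{\go,\gb}]$. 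The remaining qualitative properties are routine: the map $h\mapsto N^{-1}\log Z_{N,h}^{\go,\gb}$ is convex and non-decreasing as a normalized log-Laplace transform in $h$, and both properties pass to the limit; non-negativity follows from $Z_N\geq \bar K(N)$ (the lone trajectory with $\tau_1>N$), whence $N^{-1}\log Z_N\to \tf\geq 0$; finally $\partial_h[N^{-1}\log Z_{N,h}^{\go,\gb}]=N^{-1}\bE^{\go,\gb}_{N,h}[\sum_{n=1}^N\ind_{\{n\in\tau\}}]$, and the standard fact that pointwise convergence of convex functions on an open interval implies convergence of the derivatives at every differentiability point of the limit gives the derivative identity a.s.

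The only substantive departure from the i.i.d.\ proof is the ergodicity step. Because the correlations of $\go$ decay only at the polynomial rate $k^{1-\tilde\ga}$ computed in the preceding Remark, a direct concentration argument would be delicate; passing to the regenerative stationary construction sidesteps concentration entirely in favor of Kingman's theorem. This is also exactly where the hypothesis $\tilde\ga>1$ becomes essential — it is precisely what makes the stationary version of $\hat\tau$ well-defined and the free energy self-averaging, as already anticipated in that same Remark.
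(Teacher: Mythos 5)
Your proof is correct in substance, and all the auxiliary steps (the comparison $|\log Z_N-\log Z_N^{c}|=O(\log N)$ via the renewal decomposition, non-negativity from the single trajectory with $\tau_1>N$, convexity/monotonicity in $h$, and the convexity argument for the contact fraction) match what the paper does or delegates to standard references. Where you genuinely diverge is in the core existence step. The paper does \emph{not} stationarize the environment: it keeps the Palm version of $\hat\tau$ (a regeneration at the origin) and exploits the superadditivity of $\log Z^{\pin}$ evaluated along the \emph{random} subsequence $\hat\tau_N$, where the increments become independent and identically distributed by the regenerative structure of $\go$; Kingman (or even just the law of large numbers) then applies trivially, and one converts back to deterministic $N$ by the law of large numbers for $\hat\tau$, which produces the factor $\hat\bE[\hat\tau_1]^{-1}$. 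You instead pass to the two-sided stationary delayed renewal so that $\go$ becomes stationary and ergodic under the deterministic shift, and apply Kingman along deterministic times. Your route buys a cleaner invocation of the ergodic theorem (no random subsequence, no interpolation back to all $N$), at the price of three extra verifications: existence of the stationary version (which indeed needs $\tilde\ga>1$), ergodicity of the shift, and the transfer from the stationary environment back to the Palm environment of the statement. On that last point, your phrase ``coincide after some a.s.\ finite time'' is slightly loose: the clean version is to condition on the first regeneration point $T$ of the stationary version, after which the environment is a shift of the Palm one, and to note that modifying the finitely many sites $n\leq T$ changes $\log Z_{N,h}^{\go,\gb}$ by at most $\gb T=o(N)$ (alternatively, the coalescing coupling works because the power-law asymptotics force $\hat K(n)>0$ for large $n$, hence aperiodicity). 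With that made explicit, the argument is complete and yields the same conclusion as the paper's.
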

\begin{proof}
 The second part of the result is classic for pinning models (see for example \cite{Book}) and we leave it to the reader.
 For the first one, one introduces the partition function with \textsl{pinned} boundary condition:

\begin{equation}\label{pbc}
 Z_{N,h}^{\go,\gb,\pin}:= \bE\left[ \exp\left( \sum_{n=1}^N  (h+\gb\go_n) \ind_{\{n\in\tau\}}\right)\ind_{\{N\in \tau\}} \right].
\end{equation}
Note that (see equation (4.25) in \cite{Book} and its proof) there exists a constant $c>0$ such that 
\begin{equation}\label{houhi}
  c N^{-1} e^{-\gb+h}Z_{N,h}^{\go,\gb}  \le Z_{N,h}^{\go,\gb,\pin}\le  Z_{N,h}^{\go,\gb},
\end{equation}
so that it is equivalent to work with $Z$ or $Z^{\pin}$ as far as $\tf$ is concerned.
Then one notices that
\begin{equation}
 Z_{N+M,h}^{\go,\gb,\pin}
\ge \bE\left[ \exp\left( \sum_{n=1}^N  (h+\gb\go_n) \ind_{\{n\in\tau\}}\right)\ind_{\{N \in \tau,\ (N+M)\in \tau\}} \right]
=  Z_{N,h}^{\go,\gb,\pin}Z_{M,h}^{\theta^N \go,\gb,\pin},
\end{equation}
where $\theta$ is the shift operator, \textit{i.e.} $\theta^N \go:=(\go_{n+N})_{n\ge 0}$. So that in particular
\begin{equation} \label{aftshd}
 \log Z_{\hat\tau_{N+M},h}^{\go,\gb,\pin}\ge \log Z_{\hat \tau_N,h}^{\go,\gb,\pin}
+\log Z_{\hat \tau_{M+N}-\hat\tau_{N},h}^{\theta^{\hat \tau_N} \go,\gb,\pin}.
\end{equation}
Note that from the renewal construction of the environment,
the two terms on the right hand-side are independent and that the law of the second one is the same 
as the law of
 $\log Z_{\hat \tau_M,h}^{\go,\gb,\pin}$.
Therefore one can use Kingman's superadditive ergodic Theorem \cite[Theorem 1]{King} or 
simply the law of large numbers (like it is done in \cite[Section 4.2]{Book}) to conclude that 
\begin{multline}\label{altfreen}
 \lim_{N\to \infty} \frac{1}{N} \log Z_{\hat \tau_N,h}^{\go,\gb,\pin}
= \lim_{N\to \infty} \frac{1}{N}\hat\bE \ \bbE \left[\log Z_{\hat \tau_N,h}^{\go,\gb,\pin}\right]\\
=\sup_{N\ge 0} \frac{1}{N}\hat\bE\ \bbE  \left[\log Z_{\hat \tau_N,h}^{\go,\gb,\pin}\right]=:\bar\tf(\gb,h).
\end{multline}
Then the law of large numbers for $\hat \tau$ gives that
\begin{equation}\label{altfreen2}
 \tf(\gb,h)=\lim_{N\to \infty} \frac{1}{\hat\tau_N} \log Z_{\hat \tau_N,h}^{\go,\gb,\pin}=
\lim_{N\to\infty} \frac{N}{\hat\tau_N}\lim_{N\to\infty}  \frac{1}{N}\bbE \hat\bE \left[\log Z_{\hat \tau_N,h}^{\go,\gb,\pin}\right]=
\frac{1}{\hat \bE\left[\hat \tau_1\right]}\bar \tf(\gb,h).
\end{equation}
Note that we have proved only convergence almost surely along the
random subsequence $\hat \tau$. Then one can use standard arguments to
show that convergence holds for the whole sequence and also in
$\bbL_1$ (details are omitted).

\end{proof}

A matter of interest for disordered pinning models in the i.i.d.\ environment case is how the free-energy compares 
with the annealed free-energy defined by
\begin{equation}
 \tf^{\ann}(\gb,h):=\lim_{N\to \infty}\frac{1}{N}\log \hat \bE \ \bbE\left[Z_{N,h}^{\go,\gb}\right].
\end{equation}
Jensen's inequality gives that $\tf(\gb,h)\le \tf^{\ann}(\gb,h)$.
In our case this bound does not give much information. Indeed,
\begin{equation}
Z_{N,h}\ge \hat \bE \ \bbE\left[ Z_{N,h}^{\go,\gb}\right]\ge \frac12 \hat \bP(\hat \tau_1> N)   Z_{N,h}.
\end{equation}
As $\hat\bP[\hat \tau_1> N]$ behaves like $N^{-\tilde \alpha}$ for $N$ large, this
factor does not affect
the limit after taking the $\log$ and dividing by $N$. Therefore, $\tf^{\ann}(\gb,h)=\tf(h)$ and the annealed bound 
for the free-energy becomes simply
\begin{equation}
 \tf(\gb,h)\le \tf(h),
\end{equation}
which is obvious from monotonicity in $\go$ of $Z_{N,h}^{\go,\gb}$.
This contrasts with the case of i.i.d.\ environment, for which the annealed bound gives a non-trivial upper-bound on the free-energy.

\subsection{Main results and a comparison with the  previous literature}

What we show concerning the free-energy of  our disordered model is
that it is positive for every positive $h$ (\textit{i.e.} that the
presence of negative $\go$ is not sufficient to repel the trajectories
from the defect line).  Moreover, we are able to compute the
asymptotics of the free-energy around $h=0_+$ up to a constant.

\begin{theorem}\label{freeE}
There exist two constants $C_1>0$ and $C_2>0$ (depending on $\gb$), such that for any $h\in (0,1)$, one has
\begin{equation}\label{fre1}
  C_1 h^{\frac{\tilde \alpha}{(1\wedge \alpha)}} |\log h|^{1-\tga} \leq \tf(\gb,h)
\leq C_2 h^{\frac{\tilde \alpha}{(1\wedge \alpha)}} |\log h|^{1-\tga}.
\end{equation}
\end{theorem}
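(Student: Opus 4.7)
The proof of both bounds in Theorem \ref{freeE} hinges on identifying the correct length scale $\ell=\ell(h)$ of ``good blocks'' -- the $\ttau$-intervals of \eqref{altern} on which $\go\equiv 0$. On such a block of length $\ell_I$, conditioning $\tau$ to pin at both endpoints gives a homogeneous pinning partition function, worth $\exp(\ell_I\tf(h)-O(1))$ by Proposition \ref{homo1}, with $\tf(h)\asymp h^{1/(1\wedge\ga)}$. By \eqref{tidd}, good blocks of length $\gtrsim\ell$ occur at density $\asymp \ell^{-\tga}$, so between two consecutive ones the polymer must jump a distance $\asymp\ell^{\tga}$, costing $\log K(\ell^{\tga})\asymp -\tga(1+\ga)\log\ell$. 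The effective per-site free energy is therefore
\[
\tf(\gb,h)\;\asymp\;\ell^{-\tga}\bigl(\ell\tf(h)-c\log\ell\bigr),
\]
and the balance $\ell\tf(h)\asymp\log\ell$ yields $\ell\asymp|\log h|/\tf(h)\asymp|\log h|\,h^{-1/(1\wedge\ga)}$, giving the announced rate $h^{\tga/(1\wedge\ga)}|\log h|^{1-\tga}$.

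\textbf{Lower bound.} Fix $\ell := C|\log h|/\tf(h)$ with $C$ large, and call a $\ttau$-block ``good'' if $\go\equiv 0$ on it and its length lies in $[\ell,2\ell]$. Restrict the expectation defining $Z^{\go,\gb,\pin}_{N,h}$ (cf.\ \eqref{pbc}) to those $\tau$ containing the right endpoint of every good block up to $N$: this gives
\[
Z^{\go,\gb,\pin}_{N,h}\;\ge\;\prod_j K(g_j)\prod_j Z^{\pin}_{\ell_{I_j},h},
\]
where $\{I_j\}$ are the successive good blocks in $[0,N]$, $\ell_{I_j}\in[\ell,2\ell]$, and the $g_j$ are the inter--good-block gaps. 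Proposition \ref{homo1} together with \eqref{houhi} gives $\log Z^{\pin}_{\ell_{I_j},h}\ge \ell_{I_j}\tf(h)-c$, and \eqref{assump:K} gives $\log K(g_j)\ge -(1+\ga)\log g_j-c$. Taking $\hat\bE\bbE$, dividing by $N$, and invoking the law of large numbers for $\ttau$ (valid since $\tga>1$ guarantees a finite mean), the density of good blocks is $\asymp \ell^{-\tga}$ and the mean gap is $\asymp \ell^{\tga}$. Choosing $C$ large enough to absorb the logarithmic cost by the main gain produces $\tf(\gb,h)\ge c_1\ell^{1-\tga}\tf(h)\asymp h^{\tga/(1\wedge\ga)}|\log h|^{1-\tga}$.

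\textbf{Upper bound.} Since $\go\le 0$, the annealed bound only yields $\tf(\gb,h)\le\tf(h)\asymp h^{1/(1\wedge\ga)}$, off by the crucial factor $|\log h|^{1-\tga}$ (note $1-\tga<0$). The plan is to use the fractional moment method of \cite{DGLT07,GLT08,GLT09}: for $\gamma\in(0,1)$,
\[
\hat\bE\bbE[\log Z^{\go,\gb,\pin}_{N,h}]\;\le\;\gamma^{-1}\log\hat\bE\bbE[(Z^{\go,\gb,\pin}_{N,h})^\gamma],
\]
then coarse-grain $[0,N]$ at scale $L\asymp|\log h|/\tf(h)$. Only coarse blocks containing a $\ttau$-sub-block with $\go\equiv 0$ of length $\gtrsim L$ can harvest $L\tf(h)$ worth of pinning free energy; by \eqref{tidd}, their $\hat\bP$-probability is $\lesssim L^{-\tga}$. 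Implementing a change of measure that downweights exactly these harvestable coarse blocks, and then applying the standard chain/transfer argument on the $\ttau$-environment, should give $\hat\bE\bbE[(Z^{\go,\gb,\pin}_{N,h})^\gamma]\le \exp\bigl(cN L^{1-\tga}\tf(h)\bigr)$, from which the matching upper bound follows after optimization.

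\textbf{Main obstacle.} The lower bound reduces to a direct strategic construction once the optimal scale $\ell$ is identified. The upper bound is the technical heart: the annealed bound being blind to the block structure of $\go$, one must extract from the fractional moment the rarity $\ell^{-\tga}$ of long good $\ttau$-blocks. Calibrating the coarse-graining scale $L$ and the exponent $\gamma$ so that the logarithmic correction $|\log h|^{1-\tga}$ emerges with the correct exponent -- rather than a power slip $h^{\gep}$ -- is the delicate point, and is where I expect most of the work to concentrate.
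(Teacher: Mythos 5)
Your heuristic identifies the correct scale $\ell\asymp h^{-1/(1\wedge\ga)}|\log h|$, and your lower bound is essentially the paper's argument: restrict the partition function to trajectories that target long $\go\equiv 0$ stretches and pay the entropic price $O(\log \ell^{\tga})$ per jump, which is absorbed by the gain $\ell\tf(h)=C|\log h|$ once $C$ is large. (The paper targets only the single longest stretch in a window of length $N_h\asymp \ell^{\tga}$ rather than all good blocks, but the two constructions are equivalent in outcome.) This half is fine.

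The upper bound, however, is a genuine gap. You propose the fractional moment/change-of-measure machinery of \cite{DGLT07,GLT08,GLT09} and assert that it ``should give'' $\hat\bE\bbE[(Z^{\go,\gb,\pin}_{N,h})^\gamma]\le \exp(cNL^{1-\tga}\tf(h))$; this key estimate is never derived, and it is the entire content of the hard half of the theorem. Worse, the method is of doubtful applicability here: fractional moments are designed to prove that the free energy \emph{vanishes} in a region (by showing the moment stays bounded in $N$), or to bound a critical-point shift. Here $h_c=0$ and $\tf(\gb,h)>0$ for every $h>0$, so what is needed is a sharp control of the \emph{growth rate} of the fractional moment above criticality, for which the standard coarse-graining/tensorization of that method gives nothing better than Jensen, i.e.\ the trivial bound $\gamma N\tf(h)$. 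The paper's route is entirely different and purely quenched: a deterministic block factorization $Z_{\tT_N,h}^{\go,\gb}\le\prod_i[(\max_x Z^{\go,\gb}_{[x,\tT_i],h})\vee 1]$ (Lemmas \ref{lem1} and \ref{lem2}), the observation that since $\go\le 0$ every visited block containing a $-1$ yields a strict contraction factor $1-C\,\xi_i/\tT_{i-1}$ (Lemmas \ref{etap2}, \ref{superlem1}, \ref{parigolo}), and a metablock decomposition (Lemma \ref{tosto}) showing that the accumulated contraction $\exp(-\tfrac{C}{2}\log(\cT_1/\xi_1))$ over the blocks separating two long zero-stretches defeats the energetic gain $e^{h\xi}$ unless $\xi\gtrsim h^{-1/(1\wedge\ga)}|\log h|$ or the separation is atypically short; the $|\log h|^{1-\tga}$ factor then comes from the tail $\tilde\bP(\xi_1\ge C h^{-1/(1\wedge\ga)}|\log h|)$. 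To complete your proof you would need either to carry out this quenched contraction argument or to actually establish your asserted fractional-moment bound, and I see no route to the latter.
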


\begin{rem}\rm
 Note that in the statement of the theorem the constants depend on $\gb$. This will be the case of many constants introduced during the proof, 
and we may not mention it, as in the sequel we always consider $\gb$ as a fixed parameter.
\end{rem}

Our second result is that at the critical point $h=0$, the
trajectories are strictly delocalized in the sense that typical
trajectories have only finitely many returns to zero.

\begin{theorem}\label{ncontacts}
The sequence of law $(\nu_N)_{N\ge 0}$ on $\bbN$ defined by  
\begin{equation}
\nu_N(A):=\bP_{N,h=0}^{\gb,\go}(|\tau\cap[0,N]|\in A),
\end{equation} 
(the laws of the number of contact under $\bP_{N,h=0}^{\gb,\go}$
is tight for almost every realization of $\go$. 
\end{theorem}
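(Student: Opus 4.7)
Tightness of $(\nu_N)$ is equivalent to the statement that for every $\epsilon>0$ there exists $K\in\bbN$ with $\limsup_{N\to\infty} \bP_{N,0}^{\gb,\go}(|\tau\cap[0,N]|\geq K)\leq\epsilon$ for $\bbP\times\hat\bP$-a.e.\ $\go$. I would write this probability as the ratio $W_K(N,\go)/Z_{N,0}^{\gb,\go}$, where $W_K(N,\go):=\bE[\exp(\gb\sum_n \go_n\ind_{n\in\tau})\ind_{|\tau\cap[0,N]|\geq K}]$, and then bound numerator and denominator separately. The denominator is easy: since $\go\leq 0$, the trivial ``no contact'' strategy $\tau_1>N$ gives $Z_{N,0}^{\gb,\go}\geq\bar K(N)\sim c_K N^{-\alpha}$. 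The bulk of the work is to prove an upper bound $W_K(N,\go)\leq f(K)\bar K(N)$ with $f(K)\to 0$ as $K\to\infty$, uniformly in $N$ and for $\go$ in a full-measure set.

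The key input is the computation, with $\hat\tau$ held fixed and averaging over the iid colors $(X_i)$ under $\bbP$,
\[
\bbE[e^{\gb\sum_n\go_n\ind_{n\in\tau}}]=\prod_{i\geq 1}\tfrac{1+e^{-\gb M_i(\tau)}}{2}\;\leq\;c^{|\cI(\tau)|},
\]
where $M_i(\tau):=|\tau\cap(\hat\tau_{i-1},\hat\tau_i]|$ counts contacts in the $i$-th environment block, $\cI(\tau):=\{i:M_i(\tau)\geq 1\}$ collects the visited blocks, and $c:=(1+e^{-\gb})/2<1$. Taking the $\bbP$-expectation of $W_K$ thus gives $\bbE[W_K(N,\go)]\leq \bE[c^{|\cI(\tau)|}\ind_{|\tau\cap[0,N]|\geq K}]$. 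I would then split on the size of $|\cI(\tau)|$: either (a) $|\cI(\tau)|\geq \sqrt K$, yielding a factor $c^{\sqrt K}$ that vanishes with $K$; or (b) $|\cI(\tau)|<\sqrt K$, in which case by pigeonhole some visited block contains $\geq \sqrt K$ contacts, hence must be of length at least $\ell_*(K)\to\infty$ (since a renewal needs length growing polynomially with the number of contacts it accommodates). Since $\tilde\alpha>1$, a Borel--Cantelli argument ensures $\hat\bP$-a.s.\ good control on the number of blocks of length $\geq \ell_*(K)$ in $[0,N]$; combining this with the entropic cost $K(a)\bar K(N-b)$ of steering the renewal into such a block $[a,b]$, one obtains for this case a contribution of order $g(K)\bar K(N)$ with $g(K)\to 0$. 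A standard Borel--Cantelli argument exploiting the iid structure of $(X_i)$ then upgrades the $\bbP$-averaged estimate to the a.s.\ quenched bound.

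The principal difficulty is case (b): a trajectory with many contacts concentrated inside one long good block does not interact directly with the disorder and so receives no exponential suppression from the averaging over $(X_i)$. The proof of the required estimate crucially uses the assumption $\tilde\alpha>1$, which makes long good blocks polynomially rare, combined with a careful accounting of the entropic cost for the renewal to reach such a block while ultimately exiting $[0,N]$, all while keeping the overall bound of the correct order $\bar K(N)$ rather than merely subpolynomial.
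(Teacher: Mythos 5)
Your skeleton is the paper's: lower-bound the denominator by $\bar K(N)$ via the no-contact strategy, average over the colours conditionally on $\hat\tau$ (your product formula is exactly \eqref{pioutrrr}), and split trajectories according to how many $\hat\tau$-stretches they visit. But both halves of your dichotomy have genuine gaps. In case (a), the bound $c^{\sqrt K}$ is not of the right form: since $Z_{N}^{\go,\gb}\asymp N^{-\alpha}$ (Corollary \ref{cor:ncontactvrai}), you need $W_K(N,\go)\leq f(K)\bar K(N)$, and a bound of size $c^{\sqrt K}$ with no $N$-dependence gives $\bP^{\go,\gb}_{N}(\text{case (a)})\lesssim c^{\sqrt K}N^{\alpha}\to\infty$ for fixed $K$. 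To retain the factor $\bar K(N)$ one must decompose at an intermediate renewal epoch $\tau_x=y$, apply the disorder suppression only to the initial piece $Z^{\go,\gb}_{y}(\tau_x=y)$, and keep the deterministic tail $\bP(\tau_{a-x}>N-y)$ for the remainder — this is precisely how Proposition \ref{ncontactvrai} is deduced from Lemma \ref{durdur}. The same decomposition is what makes the Borel--Cantelli legitimate: the random quantity $\sum_{y}Z^{\go,\gb}_{y}(\tau_x=y)$ is indexed by $x$ alone, so the almost-sure bound is automatically uniform in $N$, whereas a Borel--Cantelli over pairs $(N,K)$ with the estimate $c^{\sqrt K}$ is not summable. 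The paper explicitly flags this point ("one has to be careful in the way of using Borel--Cantelli").

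In case (b), the accounting $K(a)\bar K(N-b)$ for a single jump into one long block is the strategy of the \emph{lower} bound (Proposition \ref{lowertail}); for the upper bound it is only the base case. The event "fewer than $\sqrt K$ stretches visited, at least $K$ contacts" does not force all contacts into one very long block: the trajectory may spread them over up to $\sqrt K$ blocks of moderate length (which are far more numerous in $[0,N]$ than a single block of the maximal required length) and may reach them through arbitrary intermediate configurations. What the paper actually extracts from this event, via Lemma \ref{lem:boundmax}, is that $\tau_x>x^{\tilde\alpha(1-\gd)}$ (resp.\ $x^{\frac{\tilde\alpha}{\alpha}(1-\gd)}$) for \emph{every} $x$ in a whole range of scales, and it then controls the resulting pure-renewal probability by the multiscale induction of Lemma \ref{lem:multiscale1} and its $\alpha<1$ analogue, decomposing at each scale $x_l$ over the position of $\tau_{x_l}$ and invoking the local large-deviation estimates of Propositions \ref{prop:doney} and \ref{doneybis}. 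That induction is the technical core of the proof of Lemmas \ref{pasmieu} and \ref{pasmieu2}, and it is absent from your sketch; without it the union bound over the ways of distributing the contacts and of routing the renewal between good blocks does not close at the required order $f(K)\bar K(N)$.
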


We prove this result in Section \ref{sec:ncontactvrai}, and actually we get a more precise result
in Corollary~\ref{cor:ncontactvrai} and Proposition~\ref{lowertail} that we sum up as follows.
\begin{proposition}
\label{prop:tail}
For almost every $\go$, for any $\gep>0$ there exists $a_0=a_0(\go,\gb,\gep)\in\R$ such that for $a\geq a_0$ and $a\leq N^{\frac{(1/\ga)\wedge \ga}{\tilde \ga}-\gep}$
one has
\begin{equation}
 a^{-\gep -\frac{\tilde\ga(\ga+1)-1}{1\wedge \ga}} \leq \bP_{N,h=0}^{\go,\gb} \left( |\tau\cap[0,N]|=a \right) \leq a^{\gep - \tilde{\ga} (1\vee \ga)}.
\end{equation} 
\end{proposition}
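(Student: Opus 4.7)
I would prove the upper and lower bounds separately, writing $M_N:=|\tau\cap[0,N]|$ throughout. Two useful elementary facts: $Z_{N,0}^{\go,\gb}\leq 1$ since $\gb\go\leq 0$, while $Z_{N,0}^{\go,\gb}\geq \bar K(N)\geq c N^{-\ga}$, because the empty trajectory $\tau\cap(0,N]=\eset$ contributes $\bar K(N)$ with no energy cost.

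The upper bound would come from exponential tilting combined with the sharp free-energy estimate of Theorem~\ref{freeE}. For every $h>0$ one has
\begin{equation*}
\bP_{N,0}^{\go,\gb}(M_N=a) \leq e^{-ha}\,\frac{Z_{N,h}^{\go,\gb}}{Z_{N,0}^{\go,\gb}}.
\end{equation*}
I would upgrade the asymptotic bound $\tf(\gb,h)\leq C_2 h^{\tga/(1\wedge\ga)}|\log h|^{1-\tga}$ into a finite-$N$ $\bbP\times\hat\bP$-almost sure bound $\log Z_{N,h}^{\go,\gb}\leq (1+\gep)N\tf(\gb,h)+O(\log N)$ by exploiting the i.i.d.\ block decomposition of $\log Z_{\hat\tau_N,h}^{\go,\gb,\pin}$ used in \eqref{aftshd}--\eqref{altfreen2} and applying a Borel--Cantelli argument along a dyadic sequence. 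Combined with the denominator bound $Z_{N,0}^{\go,\gb}\geq c N^{-\ga}$, the constraint $a\leq N^{((1/\ga)\wedge\ga)/\tga-\gep}$ allows one to absorb the $\log N$ and $N^{\ga}$ factors into $a^{\gep}$; and an optimal choice $h=h(a,N)$ balancing $-ha$ against the free-energy growth then extracts the polynomial decay $a^{\gep-\tga(1\vee\ga)}$.

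For the lower bound, I would implement an explicit single-block strategy. The heavy tail \eqref{tidd} of $\tilde K$, together with a Borel--Cantelli argument, guarantees that $\bbP\times\hat\bP$-almost surely the window $[0,N]$ contains a "good" block $B=(s,s+L]$ on which $\go\equiv 0$, with $L$ as large as $N^{1/\tga-\gep}$ and with $s$ and $N-s-L$ both of order $N$. I would restrict $Z_{N,0}^{\go,\gb}(M_N=a)$ to trajectories with $\tau_1=s+1$, all $a$ returns in $B$ with gaps close to $L/a$, and $\tau_{a+1}>N$; the corresponding contribution is at least
\begin{equation*}
K(s+1)\cdot \binom{L-1}{a-1}K(L/a)^{a-1}\cdot \bar K(N-s-L),
\end{equation*}
with no disorder penalty since $\go\equiv 0$ on $B$. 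Using $Z_{N,0}^{\go,\gb}\leq 1$ and optimizing over $L$ in the allowed range -- the competition between the cost $K(s)\bar K(N-s-L)$ of reaching and leaving $B$ and the cost of fitting $a$ returns inside a block of length $L$ is encoded in the $(1\wedge\ga)$ factor -- yields the claimed lower bound $a^{-\gep-[\tga(\ga+1)-1]/(1\wedge\ga)}$.

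The hard part will be the upper bound's finite-volume concentration: Theorem~\ref{freeE} only supplies the asymptotic rate, whereas the argument above requires a quantitative deviation bound on $\log Z_{N,h}^{\go,\gb}$ uniform over the relevant range of $h=h(a,N)$, with an exceptional set of $\bbP\times\hat\bP$-probability summable in $N$ for Borel--Cantelli to apply. The i.i.d.\ structure of successive $\hat\tau$-segments from \eqref{aftshd}--\eqref{altfreen2} is the essential structural input, but combining it with the truncation needed for an almost-sure statement, uniformly in the tilting parameter, requires careful bookkeeping. A secondary and much easier task is the extreme-value argument for $\tilde K$ needed to locate the good block of the optimal length in the lower bound.
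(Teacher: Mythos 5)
Both halves of your proposal contain genuine gaps.

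\textbf{Upper bound.} The tilting inequality $\bP_{N,0}^{\go,\gb}(M_N=a)\leq e^{-ha}Z_{N,h}^{\go,\gb}/Z_{N,0}^{\go,\gb}$ is correct, and a finite-volume a.s.\ version of Theorem \ref{freeE} could indeed be extracted from the i.i.d.\ block structure. The problem is quantitative: writing $\nu=\tga/(1\wedge\ga)$, the best the method can give is
\begin{equation*}
\bP_{N,0}^{\go,\gb}(M_N=a)\ \leq\ \exp\Bigl(-\sup_{h>0}\bigl(ha-(1+\gep)N\tf(\gb,h)\bigr)\Bigr)\, N^{\ga+O(1)},
\end{equation*}
and since $\tf(\gb,h)\asymp h^{\nu}$ up to logarithms (the \emph{lower} bound of Theorem \ref{freeE} caps what tilting can achieve), the supremum equals $c\,a^{\nu/(\nu-1)}N^{-1/(\nu-1)}$ up to logarithms. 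For $a=N^{\gga}$ this is $N^{(\gga\nu-1)/(\nu-1)}$, which requires $\gga\geq 1/\nu=(1\wedge\ga)/\tga$ just to beat the polynomial prefactor $N^{\ga}$. But the proposition's range is $\gga\leq\frac{(1/\ga)\wedge\ga}{\tga}-\gep$, which lies strictly \emph{below} $1/\nu$ in both cases $\ga<1$ and $\ga>1$. Hence throughout the claimed range the exponent is $o(1)$ and the bound exceeds $1$: the power-law tail $a^{-\tga(1\vee\ga)}$ is not a large-deviation phenomenon and cannot be seen by a Chernoff bound. The paper instead argues pathwise at $h=0$: trajectories visiting many $\hat\tau$-stretches pay the averaged disorder cost \eqref{pioutrrr}, made almost sure via Markov plus Borel--Cantelli (Lemmas \ref{durdur}, \ref{durdur2}); trajectories visiting few stretches are forced to satisfy $\tau_x>x^{\tga(1\wedge\ga)-\gd}$ at all scales, whose probability is controlled by a multiscale induction resting on Doney-type local estimates (Lemmas \ref{pasmieu}, \ref{pasmieu2}). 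Your approach has no substitute for either step.

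\textbf{Lower bound.} Placing the good block at distance $s\asymp N$ is fatal. The entry and exit costs are then $K(s)\bar K(N-s-L)\asymp N^{-(1+2\ga)}$, so even after dividing by $Z_{N,0}^{\go,\gb}\leq CN^{-\ga}$ (and a fortiori by your cruder $Z_{N,0}^{\go,\gb}\leq 1$) the resulting bound is at most $CN^{-(1+\ga)}$, which depends on $N$ and not on $a$: for fixed $a$ and $N\to\infty$ it vanishes, whereas the claimed lower bound is a positive constant. The correct strategy (Proposition \ref{lowertail}) is to target the \emph{first} $\go\equiv 0$ stretch of length $\geq 2C_{28}a^{1/(1\wedge\ga)}$, which by the extreme-value Lemma \ref{lem:boundmax} sits at distance $\hat\tau_{i_a}\lesssim a^{\tga/(1\wedge\ga)}\log a\ll N$; the entry cost is then $\approx a^{-\tga(1+\ga)/(1\wedge\ga)}$, polynomial in $a$, while the exit cost $\bar K(N-\cdots)\asymp N^{-\ga}$ is exactly cancelled by the sharp estimate $Z_{N,0}^{\go,\gb}\leq CN^{-\ga}$ of Corollary \ref{cor:ncontactvrai} (note that $Z\leq 1$ does not suffice here). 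Your "optimization over $L$" cannot repair the loss of the factor $N^{-(1+\ga)}/a^{-\tga(1+\ga)/(1\wedge\ga)}$ incurred by the choice $s\asymp N$.
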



\begin{rem}\rm
\label{rem:tail}
Proposition \ref{prop:tail} indicates that the asymptotic law of the number
of contacts under $\bP_{N,h=0}^{\go,\gb}$ has a power-decaying tail. 
This power-law behavior contrasts with what happens for $h<0$, where the law of ${|\tau\cap [0,N]|}$ has an exponential tail.
In view of how our results are obtained, we conjecture that it is the lower-bound given in Proposition \ref{prop:tail} that is sharp.
\end{rem}

\medskip

It is instructive to compare the sharp estimates of Theorems
\ref{freeE} and \ref{ncontacts} with the results available in the
literature on other pinning models.

The first important remark is that the free energy critical exponent
(call it $\nu$, so that $\nu= \tilde \alpha/(1\wedge \alpha)$,
cf. \eqref{fre1}) is different both from the critical exponent of the
homogeneous model: $\nu=1/(1\wedge \alpha)$ (cf. Proposition
\ref{homo1}) and from that of the disordered model with
i.i.d.\ disorder. In the latter the critical exponent equals
$\nu=1/\alpha$ if $\alpha<1/2$ and $\beta$ small (regime of irrelevant
disorder \cite{A06,T}) and in all cases (every $\alpha,\beta>0$) one
observes a disorder induced \textit{smoothing} of the free-energy curve near the critical point that implies $\nu\ge2$ when it exists \cite{GT05}
(in contrast, remark that
the critical exponent in \eqref{fre1} can be smaller than $2$
for our correlated model).
Always concerning the critical exponent, let us also add that up to
now precise asymptotics of the free-energy (close to
the critical point) for pinning models  had been proved only for the case of homogeneous
(or weakly inhomogeneous, i.e.\ periodic) environment (Proposition
\ref{homo1}), and for the mentioned case of i.i.d.\ environment,
$\alpha<1/2$ and $\beta$ small \cite{A06, T, GT_ap} (we let aside
\cite{Aivy} where it is proved that first order transition occurs for
a very special model).

A second important observation concerns the value of the critical point.
In our model, it equals zero for the homogeneous model (and therefore for the annealed one) but also for the quenched model 
(for every $\alpha,\tilde\alpha,\beta$). 
This is in contrast with what happens for i.i.d.\ random environment:
in that case, the critical point of the annealed model
equals  $h_c^{\a}(\beta) = -\log \E[e^{\gb\go_1}]$. Also, for i.i.d.\ environment it is a 
crucial issue to know whether the critical point $h_c^{\q}(\beta)$ of the
quenched model coincides or not with $h_c^\a(\beta)$: one has
 $h_c^{\q}(\beta)=h_c^{\a}(\beta)$ if $\ga<1/2$, $\beta$ small \cite{A06,T}
and $h_c^{\q}(\beta)<h_c^{\a}(\beta)$ if $\ga\geq 1/2$ (every $\beta>0$, 
with sharp bounds on their difference in the limit of $\beta$ small \cite{AZ08,DGLT07,GLT08,GLT09});
another situation where  $h_c^{\q}(\beta)<h_c^{\a}(\beta)$ 
is  $\ga<1/2$, $\beta$ large \cite{T_AAP}.



Finally, we make some observations concerning the behavior of the trajectories
at the critical point given by Theorem \ref{ncontacts}. The exact behavior is
known for the pure model (cf.\ Proposition \ref{homo2}), in the irrelevant
disorder regime for i.i.d.\ disorder (see \cite{L}), but very little
is known in the other cases (in \cite{L} it is shown that there should
be at most $N^{1/2+\gep}$ contacts with large probability, this result
being linked to the above mentioned free energy critical exponent
bound $\nu\ge2$).  In contrast, in our model the number of contacts at
the critical point is not directly related to the critical behavior of
the free energy (see however Proposition \ref{prop:tail}).
Note that up to now, for i.i.d.\ disordered pinning
models, the best general bound one has for the number of contact points in the
delocalized phase is $O(\log N)$ \cite[Section 8.2]{Book}, but in our
case one has that it is $O(1)$.

Concerning previous results on pinning models with correlated random
environment, the only work we are aware of is \cite{Poisat}, where a
model with \emph{finite-range} disorder
correlations is studied.
Let us also mention that the authors of
\cite{BL,BergToni,BS08,BS09} consider a random walk that is pinned on
a second (quenched) random walk: this can also be seen as an example
of a pinning model in a correlated environment. In both of this cases, however,
the results one finds are similar to the ones of the
i.i.d. environment case.

\smallskip

We have chosen to constrain ourselves only to a very particular setup
for the sake on simplicity, however our results should hold with much greater generality
for correlated environment $\go\in\{-1,0\}^{\N}$.

\subsection{Strategy of the polymer under $\bP_{N,h}^{\go,\gb}$, ideas of the proofs}

\label{sec:strategy}

  We give in this section an idea on the strategy the polymer adopts
  under the measure $\bP_{N,h}^{\go,\gb}$, this undersanding
  clarifying the schemes of the proofs of
  Theorems \ref{freeE} and \ref{ncontacts}.

\smallskip

  The proof of Theorem \ref{freeE} gives the right bounds on the free
  energy, but also a heuristic understanding of the typical behavior of the trajectories
  under the measure $\bP^{\omega,\gb}_{N,h}$.
  The idea is that the polymer tends to pin on the regions where
  $\go\equiv0$, but only those of length larger than $h^{-\frac{1}{1\wedge
      \alpha}}|\log h|$, whereas they are repelled from the interface
  by any other region.
  Thus the idea to prove Theorem \ref{freeE} is to estimate
  the contribution of all these different kinds of
  regions to the partition function. For the lower bound
  the strategy of targeting only regions of length larger
  than $h^{-\frac{1}{1\wedge\alpha}}|\log h|$ already gives the right result.
  To get the upper bound, one has to control the contribution of all the possible
  trajectories.
  Roughly, the argument is that one uses a coarse-graining argument
  to cut the system into blocks of finite size, and sees that if one block does not contain
  a region of length larger than $h^{-\frac{1}{1\wedge\alpha}}|\log h|$
  it does not contribute to the partition function.

  A consequence of this observation is that the behavior of the
  free-energy near the critical point depends on the frequency of
  occurrence of regions of length $h^{-\frac{1}{1\wedge \alpha}}|\log
  h|$ where $\go \equiv 0$. When $\tilde\alpha$ is close to one, these
  regions occur relatively frequently, and for this reason the
  critical exponent for the free-energy in our model is close to the
  one of the homogeneous model. The two exponents get more and more
  different when $\tilde \alpha$ grows and this type of regions
  becomes more rare.

\smallskip

 Now, let us explain how we intend to prove Theorem \ref{ncontacts} and
 Proposition~\ref{prop:tail}. 
 We bound from above the probability of having exactly $a$ contacts before
 $N$ under the measure $\bP_N^{\omega,\beta}$ by considering the
 contribution of the different strategies for the polymer trajectory.
 For a trajectory $\tau$, let $V^{\hat \tau}_N(\tau)$ be the number of $\hat \tau$-renewal
 stretches (we call $\hat\tau$-stretch a segment 
 of the type $(\hat \tau_i, \hat \tau_{i+1}]$) visited by $\tau$:
\begin{equation}
\label{defVtau}
 V^{\hat \tau}_N(\tau):=|\{ i\in \N \ | \ \exists \ j\in \tau\cap[0,N], j\in(\hat \tau_{i}, \hat \tau_{i+1}]\}|.
\end{equation}
 We split the set of trajectories such that $\{|\tau\cap[0,N]|=a\}$ into two cases
\begin{itemize}
\item The trajectory $\tau$ visits a lot of $\hat \tau$-stretches (say $V^{\hat \tau}_N(\tau)\ge a^{\gep}$),
\item The trajectory $\tau$ visits only a few $\hat \tau$-stretches ($V^{\hat \tau}_N(\tau)< a^{\gep}$).
\end{itemize}

 One remarks that for any trajectory $\tau$
\begin{equation}\label{pioutrrr}
 \bbE\left[e^{\sum_{n=1}^N \gb\go_n\ind_{\{n\in \tau\}}}\right]\le \left(\frac{1+e^{-\gb}}{2}\right)^{V^{\hat \tau}_N(\tau)},
\end{equation}
 where we recall that $\bbE$ denotes the average only on the values of 
 $\{X_i\}_{i\in\N}$, i.e. on the disorder $\omega$ \emph{conditionally on the
 realization of $\hat \tau$.}
 Equation \eqref{pioutrrr} tells us that visiting a lot of stretches has, in average, a strong energetic cost, and that therefore these trajectories
 do not contribute a lot to the partition function
 (this is formalized in the proof of Lemmas  
 \ref{durdur} and \ref{durdur2}). In order to have a result that holds almost surely, however, one has to be careful in the way 
 of using Borel-Cantelli Lemma.

 For the second type of trajectories, on the other hand, we observe that
 in order not to visit many $\hat\tau$-stretches,
 one has to put a lot of contacts in very few $\hat \tau$-stretches,
 and this strategy has a large entropic cost (which is a priori not that
 easy to control). The most convenient way of doing this is to target sufficiently large stretches
 and put the contacts there.
 The key idea to estimate this is to realize that in
 order to visit the long stretches without having too many
 contacts before, $\tau$ has to grow much faster
 that  it would typically do, in the sense that $\tau_x$ has to be  larger
 than $x^{\tilde \alpha (1\wedge \alpha)}$ (cf. Lemmas \ref{pasmieu} and 
 \ref{pasmieu2}), which is much larger than 
 what it would typically be, that is, $x^{(1\wedge \alpha)}$.
 We get this thanks to Lemma
 \ref{lem:boundmax} which says that the first  $\hat\tau$-stretch
 of size $l\gg1$ occurs at distance
 approximately $l^{\tilde\alpha}$ from the origin.
 One also notices that targeting at the first jump a sufficiently
 large $\hat\tau$-stretch and putting all the contacts in it already gives the right
 lower bound in Proposition \ref{prop:tail}, and we believe this is the right strategy
 for the polymer to adopt.


\section{Lower bound on the free energy} \label{seclb}

We prove in this section the easier half of Theorem \ref{freeE}. Here and later we choose $h$ small enough 
(then one can say that the results hold for all $h\in (0,1)$ by modifying the constant $C_1$).
For practical reasons we compute a lower bound for $\bar \tf(\gb,h)$ which according to \eqref{altfreen2} 
is equal to $\tf(\gb,h)$ up to a multiplicative constant.
Then, according to \eqref{altfreen}, it is sufficient to estimate 
$\hat \bE\ \bbE [\log Z_{\hat \tau_N,h}^{\go,\gb,\pin}]$ for a given $N$ to get a lower bound.

\medskip

We define $M_{N}$ to be the size of the longest inter-arrival among the $N$ first of the renewal~$\hat \tau$:
\begin{equation}
 M_{N} := \max_{i\in[1,N]} (\hat\tau_{i}-\hat \tau_{i-1}),
\end{equation} 
and $i_{\max}$ to be the smallest index such that $\hat\tau_{i}-\hat \tau_{i-1}=M_N$.
In order to get an explicit lower bound on $Z_{\hat \tau_N,h}^{\go,\gb,\pin}$ we consider the contribution of trajectories $\tau$ that
have contacts with the defect line only in the interval $(\htau_{i_{\max-1}},\htau_{i_{\max}}]$.

\begin{figure}[hlt]

\leavevmode
\epsfxsize =14 cm
\psfragscanon
\psfrag{0}{$0$}
\psfrag{N}{$\hat\tau_N$}
\psfrag{MN}{$M_N$}
\psfrag{T1}{$\hat\tau_{i_{\max-1}}$}
\psfrag{T2}{$\hat\tau_{i_{\max}}$}
\epsfbox{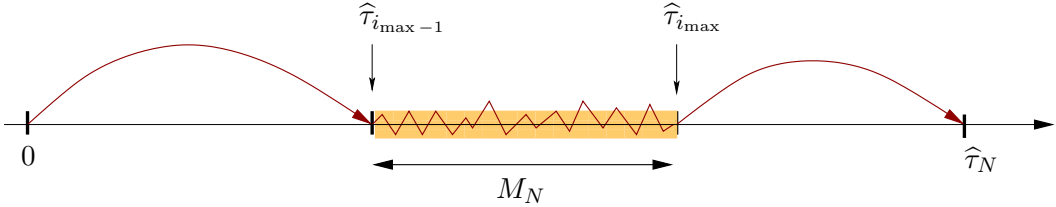} 
\begin{center}
   \caption{The strategy to get the lower bound is to target only the longest $\hat\tau$-stretch, which is
of size $M_N$, starting at $\hat\tau_{i_{\max} -1}$ and ending at $\hat\tau_{i_{\max}}$.}
\end{center}
\label{figlongstretch}
\end{figure}

If $X_{i_{\max}}=0$, then one has
\begin{equation}
Z_{\hat \tau_N,h}^{\go,\gb,\pin}
\ge K(\htau_{i_{\max}-1})e^{h-\gb} Z^{\pin}_{M_N,h} K(\hat \tau_N-\htau_{i_{\max}})e^{h-\gb},
 \end{equation}
where 
$Z^{\pin}_{N,h}$ denotes the partition function of the homogeneous pinning
model with pinned boundary condition (similar to \eqref{pbc} but with 
$\gb=0$).
Now note that our assumptions on $K(\cdot)$ ensures that for $N$ sufficiently large one has
\begin{equation} 
\min( K(\htau_{i_{\max}-1}),  K(\hat \tau_N-\htau_{i_{\max}}))\ge \frac12 \hat c_K (\htau_N)^{-(1+\alpha)}. 
\end{equation}

From all this one gets that there exists a constant $C_3$ (depending on $\beta$) such that
\begin{equation}\label{okeq}
\hat \bE\bbE \left[ \log Z_{\hat \tau_N,h}^{\go,\gb,\pin} \right] \ge \bbP( X_{i_{\max}}=0 )\left(C_3 - 2 (1+\alpha) \hat \bE[\log \htau_N]+ 
 \hat\bE \log Z^{\pin}_{M_N,h}\right).
\end{equation}
Then, one must estimate $\hat \bE \log Z^{\pin}_{M_N,h}$.
We use the following estimate for $Z_N^{\pin}$

\begin{lemma}\label{totti}
There exists a constant $C_4$ such that for every $h\in (0,1)$, and every $N$,
\begin{equation}
 Z^{\pin}_{N,h} \ge C_4 N^{-1}e^{N\tf(h)}.
\end{equation}
\end{lemma}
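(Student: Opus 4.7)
The natural approach is the \emph{exponential tilt}. For $h>0$, the defining identity of the free energy $\tf(h)$ is $\sum_{n\geq 1} e^{h-n\tf(h)}K(n) = 1$, so
\[
K_h(n) := e^{h-n\tf(h)}K(n)
\]
is a probability measure on $\N$; denote by $\bP_h$ the law of the associated renewal process $\tau_h$, which has finite mean $\mu_h = 1/\tf'(h)$ (as $K_h$ has exponentially decaying tails). A direct reorganization of the expansion $Z^{\pin}_{N,h} = \sum_{k\geq 1} e^{kh}\bP(\tau_k=N)$ yields the key identity
\[
Z^{\pin}_{N,h} = e^{N\tf(h)}\,\bP_h(N\in\tau_h),
\]
so the lemma reduces to the uniform renewal bound $\bP_h(N\in\tau_h) \geq C_4/N$ for $N\geq 1$ and $h\in(0,1)$.

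I would prove this bound by splitting according to whether $N\tf(h)$ is smaller or larger than a universal constant $A$. When $N\tf(h)\leq A$, the prefactor $e^{N\tf(h)}$ is bounded, and the bound reduces (up to the constant $e^A$) to $Z^{\pin}_{N,h}\geq c/N$; this follows from the monotonicity $Z^{\pin}_{N,h}\geq Z^{\pin}_{N,0}=\bP(N\in\tau)$ (valid for $h\geq 0$) combined with classical renewal estimates: Doney's theorem gives $\bP(N\in\tau)\sim cN^{\alpha-1}\geq c/N$ when $\alpha<1$, and Blackwell's theorem gives $\bP(N\in\tau)\to 1/\bE[\tau_1]$ when $\alpha>1$ (supplemented by $K(N)>0$ to handle any small $N$). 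When on the other hand $N\tf(h)>A$, convexity of $\tf$ with $\tf(0)=0$ gives $\tf(h)\leq h\tf'(h)$, i.e.\ $\mu_h\tf(h)\leq h\leq 1$, and hence $\mu_h\leq 1/\tf(h)\leq N/A$. A quantitative version of the renewal theorem for $\tau_h$ then provides $\bP_h(N\in\tau_h)\geq c/\mu_h\geq c\tf(h)\geq c/N$, provided $A$ is taken large enough (uniformly in $h$) that the convergence $\bP_h(N\in\tau_h)\to 1/\mu_h$ has kicked in by $N\tf(h) = A$.

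\textbf{Main obstacle.} The delicate step is the uniformity in $h\to 0^+$ of the quantitative renewal theorem for $\tau_h$. When $\alpha<1$, the tilted inter-arrival law $K_h$ develops a power-law plateau of length $\sim 1/\tf(h)$ before its exponential cutoff, $\mu_h\sim c\,\tf(h)^{\alpha-1}$ diverges, and off-the-shelf Stone-type local limit estimates carry $h$-dependent constants coming from the variance (which also blows up). I would bypass this either by a scaling-invariant local limit theorem for $\tau_h$ expressed in the rescaled variable $x = n\tf(h)$ (exploiting the observation that, after dividing by $\tf(h)$, the profile of $K_h$ has the $h$-independent shape $\propto x^{-1-\alpha}e^{-x}$), or by a direct contour/residue analysis of the generating function $(1-\sum_n K_h(n)x^n)^{-1}$ near its simple pole at $x=1$, whose residue depends only on $\mu_h$ and can be compared to the remainder using the uniform tail estimates available for $K_h$.
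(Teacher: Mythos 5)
Your route is genuinely different from the paper's, and as written it has a real gap: the entire difficulty of your argument is concentrated in the uniform-in-$h$ quantitative renewal estimate $\bP_h(N\in\tau_h)\geq c/N$ in the regime $N\tf(h)>A$, and you do not prove it — you correctly identify it as the main obstacle and then only name two possible strategies (a rescaled local limit theorem for the tilted law, or a residue analysis of the generating function). Neither is carried out, and neither is routine: for $\alpha<1$ the tilted law $K_h$ has mean $\mu_h\sim c\,\tf(h)^{\alpha-1}\to\infty$ and diverging variance, so the standard quantitative renewal theorems indeed do not apply with $h$-independent constants, and establishing the claimed uniformity is a nontrivial piece of analysis in its own right (comparable in length to a section of the paper). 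So the reduction via the tilt identity $Z^{\pin}_{N,h}=e^{N\tf(h)}\bP_h(N\in\tau_h)$ is correct, and your treatment of the regime $N\tf(h)\leq A$ via $Z^{\pin}_{N,h}\geq\bP(N\in\tau)$ is fine, but the proof is incomplete where it matters most.

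The paper's proof shows that none of this machinery is needed, precisely because only a \emph{lower} bound with a polynomial loss is required. Decomposing over the first return after $n_1$ gives the submultiplicativity $e^hZ_{n_1+n_2,h}\leq e^hZ_{n_1,h}\,e^hZ_{n_2,h}$, so by Fekete's lemma $\tf(h)=\inf_N\frac1N\log(e^hZ_{N,h})$ and hence $Z_{N,h}\geq e^{-h}e^{N\tf(h)}$ for \emph{every} $N$, with no renewal theory at all; the factor $N^{-1}$ then comes from the elementary comparison \eqref{houhi} between the free and pinned partition functions. If you want to salvage your approach you must actually prove the uniform renewal bound (for instance via the rescaled tilted density $\propto x^{-1-\alpha}e^{-x}$ as you suggest), but the subadditivity argument is both shorter and strictly more elementary.
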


\begin{proof} 
We first observe that for every pair of integers $(n_1,n_2)$, decomposing over the first return time after $n_1$, one has
\begin{equation}
e^h Z_{n_1+n_2,h} \leq e^h Z_{n_1,h} e^h Z_{n_2,h},
\label{sousmult}
\end{equation} 
so that the sequence $\{\log (e^h Z_{N,h})\}_{N\in\N}$ is subadditive. Then one has that $\tf(h)$
verifies
 $\tf(h) = \inf_{N\in\N} \frac{1}{N} \log e^h Z_{N,h}$ and $Z_{N,h} \geq e^{-h} e^{N \tf(h)}$
for all $N$.
And therefore, one gets the result by using \eqref{houhi} which gives
\begin{equation}
 Z_{N,h}^{\pin}\ge c N^{-1} Z_{N,h}.
\end{equation}
\end{proof}

Plugging the above result into \eqref{okeq} one has
\begin{multline}\label{machin23}
\hat \bE\ \bbE\left[ \log Z_{N,h}^{\go,\gb,\p}\right]\ge \frac{C_3}{2} - (1+\alpha)\hat \bE[\log \hat\tau_N] +
   \frac{1}{2} \hat \bE\left[ M_N \tf(h)-\log M_N \right] + \frac{\log C_4}{2}
 \\ \ge   \frac{1}{2} \hat \bE[ M_N ] \tf(h)-C_5\log \hat \bE[\hat\tau_N]+C_6
        \geq \frac{1}{2} \hat \bE[ M_N ] \tf(h)-C_5\log N -C_7,
\end{multline}
where we used in the second inequality that $M_N\leq \hat\tau_N$ and Jensen inequality so that $C_5=\frac32+\alpha$,
and in the second one that $\hat \bE[\hat\tau_N] = N\hat\bE[\hat \tau_1]$ so that $C_7 = C_5\log \hat\bE[\hat \tau_1] -C_6$.
From the assumption we have on $\hat K$, one has, uniformly for all $n\gg N^{\gep}$,
\begin{equation}
 \hat \bP \left[M_N\le n\right]= \hat\bP(\htau_1\le n)^N = \exp\left(-\frac{\hat c_K}{\tilde \alpha} N n^{-\tilde \alpha}(1+o(1))\right).
\end{equation}
So that using Rieman sum as approximation of integral one gets that
 $\hat \bE[M_N]=(C_8+o(1))N^{\tilde \alpha^{-1} }$,
where 
\begin{equation}
 C_8= \int_{0}^{\infty} \left(1- \exp\left(-\frac{\hat c_K}{\alpha}x^{-\tilde \alpha} \right) \right).
\end{equation}
Now we choose $N$ to be equal to $N_h:=C_{9} h^{-\frac{\tilde \alpha}{(1\wedge \alpha)}}|\log h|^{\tilde \alpha}$, so that
if $h$ is small enough
\begin{equation}
   \frac12 \tf(h)\hat \bE [M_{N_h}]-C_6\log N_h\ge \frac{C_7}{2}\tf(h)N_h^{1/\tilde \alpha}-C_6 \log N_h\ge |\log h|,
\end{equation}
where the last inequality holds provided $C_{9}$ (entering in the definition of $N_h$) is large enough, using the behavior of $\tf(h)$ as $h$ goes to $0$.
This combined with \eqref{machin23} gives the lower inequality in \eqref{fre1} as
\begin{equation}
 \tf(\gb,h)\ge \frac{1}{N_h}\hat\bE\ \bbE\left[\log Z_{N_h,h}^{\go,\gb,\pin}\right].
\end{equation}

\section{Upper bound on the free energy when $\alpha>1$}\label{prr}

The next two sections are devoted to the proof of the upper bound for the free-energy.
This is much more complicated than the lower bound, as one has to control the contribution of all possible trajectories for $\tau$.

\smallskip

Somehow, things get technically simpler if one does not try to capture the $(\log h)^{1-\tilde \alpha}$ factor. 
Therefore we prove first a rougher result, to give a clear presentation of the strategy we use.
For the two next sections, we use the alternative construction for the environment $\go$ based on the renewal $\tilde \tau$ and presented in 
equation \eqref{altern}.

For this section we introduce the following notation
\begin{equation}\begin{split}
 \tilde T_n&= \tilde \tau_{2n},\ \forall n\ge 0,\\
 \xi_n&=  \tT_{n}- \tT_{n-1}, \ \forall n\ge 1.
\end{split}\end{equation}

\subsection{Rough bound}\label{tsw}
\begin{proposition}
 When $\alpha>1$, one can find a constant $C_2$ such that
\begin{equation}
 \tf(\gb,h)\le  C_2 h^{\tilde \alpha}.
\end{equation}
 \end{proposition}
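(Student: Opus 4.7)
The strategy is to work with the coarse-graining provided by the renewal $\tilde T_n=\tilde\tau_{2n}$ underlying the environment. Following the superadditivity argument of Proposition~\ref{existF} applied along the subsequence $(\tilde T_n)_{n\ge 0}$ and using the LLN $\tilde T_n/n\to\tilde\bE[\xi_1]$, one obtains
\[
\tf(\gb,h) \;=\; \frac{1}{\tilde\bE[\xi_1]}\,\lim_{n\to\infty}\frac{1}{n}\,\tilde\bE\bbE\!\left[\log Z_{\tilde T_n,h}^{\go,\gb,\pin}\right].
\]
It thus suffices to prove $\tilde\bE\bbE[\log Z_{\tilde T_n,h}^{\pin}]\le C\,n\,\tilde\bE[\xi_1]\,h^{\tilde\alpha}$ for $n$ large (chosen depending on $h$).

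\textbf{Step 2 (Fractional moment).} I would then apply Jensen's inequality $\bE[\log Z]\le\theta^{-1}\log\bE[Z^\theta]$ for some $\theta\in(0,1)$ with $\theta(1+\alpha)>1$ (so that $\sum_n K(n)^\theta<\infty$), and use the standard subadditivity $(\sum_i a_i)^\theta\le\sum_i a_i^\theta$ on the series expansion $Z=\sum_\tau K(\tau)\,e^{H(\tau,\go)}$. Averaging over $\go$ via the i.i.d.\ family $(X_i)$ from the first construction gives, as in \eqref{pioutrrr} applied with parameter $\theta\gb$,
\[
\hat\bE\bbE\!\left[(Z_{\tilde T_n,h}^{\go,\gb,\pin})^\theta\right]\;\le\;\sum_{\tau\ni\tilde T_n} K(\tau)^\theta\,e^{\theta h\,|\tau\cap[0,\tilde T_n]|}\,\hat\bE\!\left[\mu^{V_{\tilde T_n}^{\hat\tau}(\tau)}\right],
\]
where $\mu:=\tfrac{1}{2}(1+e^{-\theta\gb})\in(0,1)$ and $V^{\hat\tau}_N(\tau)$ is as in \eqref{defVtau}.

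\textbf{Step 3 (Combinatorial analysis).} The factor $\mu^{V^{\hat\tau}(\tau)}<1$ discourages $\tau$ from visiting many $\hat\tau$-stretches, while $K(\cdot)^\theta$ controls the entropy of excursions and $e^{\theta h k}$ rewards contacts. The optimal trade-off selects trajectories localized in a single $\hat\tau$-stretch of length $\ell\sim 1/h$, inside which homogeneous pinning at rate $\tf(h)\le C h$ (Proposition~\ref{homo1} for $\alpha>1$) produces an $O(1)$ contribution to $\log Z$. Such stretches occur with density $\bar{\hat K}(1/h)\sim h^{\tilde\alpha}$ in the environment by the tail assumption on $\hat K$, and so the total contribution is $\exp(C\theta n\,\tilde\bE[\xi_1]\,h^{\tilde\alpha})$, whence $\tf(\gb,h)\le C_2 h^{\tilde\alpha}$ after taking $\log$ and dividing by $\theta n \tilde\bE[\xi_1]$.

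\textbf{Main obstacle.} The delicate point is to control the coupled expectation $\hat\bE[\mu^{V_{\tilde T_n}^{\hat\tau}(\tau)}]$, in which the polymer trajectory $\tau$ and the environment renewal $\hat\tau$ interact through the visit count $V^{\hat\tau}(\tau)$. One natural route is to condition on the range $R=\max\tau-\min\tau$ of $\tau\cap[0,\tilde T_n]$: since $\tilde\alpha>1$ gives $\hat\bE[\hat\tau_1]<\infty$, a Chebyshev estimate shows $V^{\hat\tau}(\tau)\ge cR$ with $\hat\bP$-high probability, so $\hat\bE[\mu^{V^{\hat\tau}(\tau)}]\lesssim\mu^{cR}$ up to corrections. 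The remaining sum over $\tau$ then reduces to a one-dimensional optimization over $R$, and together with the summability $\sum_n K(n)^\theta<\infty$ yields the claimed bound (throughout, $\gb$ is treated as a fixed parameter, so all constants may depend on it).
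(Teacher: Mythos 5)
Your Steps 1--2 are fine as far as they go (the reduction along $\tilde T_n$ mimics \eqref{altfreen}--\eqref{altfreen2}, and the fractional-moment inequality is a correct inequality), but the argument is not a proof: Step 3 asserts the conclusion rather than deriving it, and the route you propose for the ``main obstacle'' contains a genuine error. The estimate $\hat\bE\big[\mu^{V^{\hat\tau}(\tau)}\big]\lesssim \mu^{cR}$ ``up to corrections'' is false in exactly the regime that matters: one always has
\begin{equation*}
\hat\bE\big[\mu^{V^{\hat\tau}(\tau)}\big]\ \ge\ \mu\,\hat\bP(\hat\tau_1>R)\ \asymp\ \mu\, R^{-\tga}\ \gg\ \mu^{cR},
\end{equation*}
i.e.\ the ``correction'' coming from the event that the whole range of $\tau$ sits inside a single long $\hat\tau$-stretch dominates the Chebyshev term and is precisely the mechanism producing the $h^{\tga}$ scaling. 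If you really could replace $\hat\bE[\mu^{V}]$ by $\mu^{cR}$, the resulting sum over $\tau$ would be a homogeneous partition function with an exponentially decaying extra penalty per unit of range, whose free energy vanishes for all small $h>0$; that would prove $\tf(\gb,h)=0$ near the critical point and contradict the lower bound of Theorem \ref{freeE}. So the one concrete step you offer toward closing the gap cannot work, and the actual trade-off between the entropy $K(\cdot)^\theta$, the reward $e^{\theta h|\tau|}$, and the stretch-visiting penalty --- which is where the exponent $\tga$ must emerge --- is never carried out. (A secondary point: you also need $\mu\sum_n K(n)^\theta\le 1$ or a substitute for it, which forces $\theta$ close to $1$ depending on $\gb$; this is fixable but is not addressed.)

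For comparison, the paper's proof avoids disorder averaging entirely: it is a pathwise coarse-graining. Using the alternative construction \eqref{altern}, Lemma \ref{lem1} factorizes $Z^{\go,\gb}_{\tT_N,h}$ over the blocks $(\tT_{i-1},\tT_i]$, and Lemma \ref{etap2} shows --- using that $\go_{\tT_i}=-1$ and that $\inf_n\bP(n\in\tau)>0$ when $\alpha>1$ --- that every block shorter than $C_{10}h^{-1}$ contributes a factor at most $1$, while longer blocks contribute at most $e^{h\xi_i}$. This gives the deterministic bound $\log Z^{\go,\gb}_{\tT_N,h}\le h\sum_i\xi_i\ind_{\{\xi_i>C_{10}h^{-1}\}}$, and two applications of the law of large numbers together with the tail \eqref{tidd} yield $C_2h^{\tga}$. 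If you want to keep a fractional-moment flavour, the known way to make it work for pinning models is to apply the inequality $(\sum a_i)^\theta\le\sum a_i^\theta$ at the level of a block decomposition (as in \cite{DGLT07,GLT08}) rather than trajectory by trajectory, and here the block analysis you would need is essentially Lemma \ref{etap2}, at which point the disorder average brings nothing.
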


\begin{proof}
The idea of the proof is to say that only the long stretches of $\go$ with $\go\equiv 0$ can contribute to the free energy and that others cannot.
The first step is to perform a kind of coarse-graining procedure in order to treat the contribution of each segment $(T_n,T_{n+1}]$,
separately (Lemma \ref{lem1} below), and then to show that the contribution of segments that are too short is zero.
 
\smallskip

It turns out that the coarse graining we present here is not optimal and this is the reason why a $\log $ factor is lost. An improved coarse graining
method is presented in the next subsection.

We introduce a new notation to describe the contribution of a given segment:
for $a$ and $b \in \bbN$, one defines (recall that $\theta$ is the shift operator defined just before \eqref{aftshd})
\begin{equation}
 Z^{\go,\gb}_{[a,b],h}:= \exp(\gb\go_a+h)Z_{(b-a),h}^{\theta^a \go,\gb}.
\end{equation}
Here is our coarse graining Lemma

\begin{lemma} \label{lem1}
For every $N\in \N$ 
\begin{equation}
 Z_{\tT_N,h}^{\go,\gb}\le \prod_{i=1}^N \left[\left(\max_{x\in (\tT_{i-1}, \tT_{i}]}
Z^{\go,\gb}_{[x,\tT_{i}],h}\right)\vee 1\right].
\end{equation}
\end{lemma}

\begin{proof}
We proceed by induction.
The claim is obvious for $N=1$. For the process $\tau$ define $\tau_{\mathrm{next}}^{(N)}:=\inf\{n>\tT_N, n\in \tau\}$ then one has
(using the Markov property for $\tau$)
\begin{multline}
  \frac{ Z_{\tT_{N+1},h}^{\go,\gb}}{ Z_{\tT_N,h}^{\go,\gb}}=\bE^{\go,\gb}_{\tT_{N},h}
\left[\exp\left(\sum_{n=\tT_{N}+1}^{\tT_{N+1}}(\gb\go_n+h)\ind_{\{n\in \tau\}}\right)\right]\\
=\sum_{x=\tT_{N}+1}^{\tT_{N+1}}\bP^{\go,\gb}_{\tT_{N},h}\left(\tau_{\mathrm{next}}^{(N)}=x\right) 
Z^{\go,\gb}_{[x,\tT_{N+1}],h}+\bP^{\go,\gb}_{\tT_{N},h}\left(\tau_{\mathrm{next}}^{(N)}>\tT_{N+1}\right). 
\end{multline}
And the above sum is smaller than $\left(\max_{x\in (\tT_{N}, \tT_{N+1}]}
Z^{\go,\gb}_{[x,\tT_{N+1}],h}\right)\vee 1$ as it is a convex combination of the terms in the maximum.
\end{proof}

Now we remark that by definition $\go_{\tT_i}=-1$. Therefore, for any $x\in (\tT_{i-1}+1, \tT_{i}]$ one has
\begin{multline}
Z^{\go,\gb}_{[x,\tT_i],h}=\bE\left[ e^{\sum_{n=x}^{\tT_i} (\gb\go_{n+x}+h)\ind_{\{n\in \tau\}}}\right]\le e^{h(\tT_i-x)}
 \bE\left[e^{\gb\go_{\tT_i} \ind_{\{\tT_i-x\in \tau\}}}\right] \\ 
=e^{h (\tT_{i}-x)} \left[1-(1-e^{-\gb})\bP(\tT_{i}-x\in \tau)\right] \\
\le  e^{h\xi_i} \left(1-(1-e^{-\gb})\inf_{n \ge 1}\bP(n\in \tau)\right).
\end{multline}
As $\bE[\tau_1]<\infty$, the renewal Theorem \cite[Chapter 1, Theorem 2.2]{Asmuss}
ensures that $\inf_{n \ge 1}\bP(n\in\tau)>0$.
From this one obtains the following result that we record as a lemma
\begin{lemma}\label{etap2}
 One can find a constant $C_{10}>0$ (depending on $\gb$) such that the following bounds hold
\begin{equation}
\begin{split}
\max_{x\in (\tT_{i-1}, \tT_{i}]}
Z^{\go,\gb}_{[x,\tT_{i}],h}\le (1-C_{10}) \hspace{1.2cm}& \quad \text{ if } 
\xi_i<  C_{10} h^{-1},\\
\max_{x\in (\tT_{i-1}, \tT_{i}]}
Z^{\go,\gb}_{[x,\tT_{i}],h}\vee 1 \le e^{h\xi_i} &\quad \text{ if } 
\xi_i\ge C_{10}h^{-1}.
\end{split}
\label{eq:differentZ}
\end{equation}
\end{lemma}

Then, the only segments that contribute to the free energy are the segments longer than $C_{10}h^{-1}$.
From Lemma \ref{lem1} and \ref{etap2} one gets that
\begin{equation}
 \log Z_{\tT_N,h}^{\go,\gb}\le h \sum_{i=1}^N  \xi_i \ind_{\{\xi_i> C_{10} h^{-1}\}}. 
\end{equation}
Now using (twice) the law of large numbers one gets that
\begin{equation}
\tf(\gb,h)=\lim_{N\to \infty} \frac{N}{\tT_N}\frac{1}{N}\log Z_{\tT_N,h}^{\go,\gb}\le \frac{1}{\tilde \bE \left[\xi_1\right]}
h \tilde \bE\left[\xi_1 \ind_{\{\xi_1> C_{10} h^{-1}\}}\right].
\end{equation}
From the definition of $\xi$ and the properties \eqref{tidd} of the renewal $\tilde \tau$ one gets that
$\tilde \bE\left[ \xi_1\right]$ is a positive constant, and that
\begin{equation}
 \bE\left[\xi_1 \ind_{\{\xi_1> C_{10} h^{-1}\}}\right]\le C_{11} h^{\tilde \alpha-1}. 
\end{equation}
This finishes the proof.
\end{proof}

\subsection{Finer bound}
\label{sec:finer1}

The reason why we lose a power of $\log h$ in the previous proof is that our coarse graining Lemma does not take into account the cost
for $\tau$ to do long jumps between the segments contributing to the free energy.
We present in this section a method to control this. This is rather technical but allows to get an upper bound matching 
the lower bound proved in Section \ref{seclb}.

\begin{proposition}\label{alphapgq1}
 When $\alpha>1$, one can find a constant $C_2$ such that
\begin{equation}
 \tf(\gb,h)\le  C_2 h^{\tilde \alpha} |\log h|^{1-\tga}
\end{equation}
 \end{proposition}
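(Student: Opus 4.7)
The loss of the factor $|\log h|^{1-\tga}$ in the rough proof of Section~\ref{tsw} comes from charging, via Lemma~\ref{etap2}, every $\tilde\tau$-stretch with length $\xi_i \ge C_{10}/h$ the full energetic cost $e^{h\xi_i}$. The plan is to work at a larger threshold
\[
L := C_{12}\, h^{-1}|\log h|,
\]
with $C_{12}$ chosen large, and to coarse-grain only over stretches of length strictly greater than $L$.

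Set $\iota_0 := 0$ and $\iota_{j+1} := \inf\{i > \iota_j : \xi_i > L\}$, and define $\bar T_j := \tT_{\iota_j}$, $\bar\xi_j := \bar T_j - \bar T_{j-1}$, $L_j := \xi_{\iota_j}$. The sequence $(\bar\xi_j, L_j)_{j\ge 1}$ is i.i.d., and a direct computation from \eqref{tidd} yields
\[
\tilde\bE[\bar\xi_1] \,\asymp\, L^{\tga} \,\asymp\, h^{-\tga}|\log h|^{\tga}, \qquad \tilde\bE[L_1] \,\asymp\, L.
\]
The telescoping-plus-Markov argument of Lemma~\ref{lem1} applies verbatim along the random subsequence $(\bar T_j)$, giving
\[
Z_{\bar T_N, h}^{\go,\gb} \,\leq\, \prod_{j=1}^{N} \Bigl[\, \max_{x \in (\bar T_{j-1}, \bar T_j]} Z^{\go,\gb}_{[x, \bar T_j], h} \;\vee\; 1 \,\Bigr].
\]

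The crux is to prove a \emph{block estimate} of the form
\[
\max_{x \in (\bar T_{j-1}, \bar T_j]} Z^{\go,\gb}_{[x, \bar T_j], h} \;\vee\; 1 \,\leq\, K\, e^{h L_j},
\]
for some constant $K=K(\gb)$ independent of $j$; that is, only the unique large stretch ending the block should carry an $e^{h\xi}$-type factor. The natural route is to iterate the first-return decomposition of Lemma~\ref{lem1} \emph{inside} each block, upgrading Lemma~\ref{etap2} to the statement that for every $\xi_i \leq L$,
\[
\max_{x \in (\tT_{i-1},\tT_i]} Z^{\go,\gb}_{[x,\tT_i],h} \,\leq\, 1.
\]
For $\xi_i \leq C_{10}/h$ this is exactly Lemma~\ref{etap2}; for $\xi_i \in (C_{10}/h, L]$, the nominal bound $e^{h\xi_i}$ can be as large as $h^{-C_{12}}$, and one must refine the estimate $e^{h\xi_i}[1-(1-e^{-\gb})\bP(\tT_i - x \in \tau)]$ by carrying along the entropic cost $K(\cdot)$ of the trajectory's first excursion from $x$, which shrinks fast enough to absorb the $e^{h\xi_i}$ gain as long as $C_{12}$ is large enough.

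Granted the block estimate, the conclusion follows by taking logarithms and applying the law of large numbers to the i.i.d.\ sequence $(\bar\xi_j, L_j)$:
\[
\tf(\gb,h) \,=\, \lim_N \frac{N}{\bar T_N}\cdot \frac{1}{N}\log Z_{\bar T_N, h}^{\go,\gb} \,\leq\, \frac{h\, \tilde\bE[L_1]}{\tilde\bE[\bar\xi_1]} \,\asymp\, \frac{h\cdot L}{L^{\tga}} \,=\, h\, L^{1-\tga} \,\asymp\, h^{\tga}|\log h|^{1-\tga}.
\]

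The main obstacle is the upgraded per-stretch bound for intermediate $\xi_i$ in the block estimate: a naive iteration of Lemma~\ref{etap2} would accumulate factors $\prod e^{h\xi_i}$ that can blow up by a large power of $h^{-1}$ when a block contains many intermediate stretches. Overcoming this requires an internal, probability-weighted coarse-graining within each block, tracking both the multiplicative gain $1 - C_{10}\bP(\tT_i - x \in \tau)$ from each $\go=-1$ endpoint and the entropic weight of each first excursion, and showing their combined product stays bounded by $1$ uniformly up to scale $L$.
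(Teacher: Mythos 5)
There is a genuine gap at the heart of your block estimate. You need, for every intermediate stretch with $\xi_i\in(C_{10}h^{-1},\,L]$ where $L=C_{12}h^{-1}|\log h|$, the deterministic bound $\max_{x\in(\tT_{i-1},\tT_i]}Z^{\go,\gb}_{[x,\tT_i],h}\le 1$. This is false. Such a stretch contains a run of length $\asymp\xi_i$ on which $\go\equiv 0$ (the first half $(\ttau_{2i-2},\ttau_{2i-1}]$), so entering at the left endpoint and pinning throughout that run already gives $Z^{\go,\gb}_{[x,\tT_i],h}\ge c\,\xi_i^{-1}e^{\tf(h)\xi_i/2}$, which for $\xi_i\gg h^{-1}$ is a (possibly large) power of $h^{-1}$. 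The only suppressions available \emph{locally} are the constant factor $1-C_{10}$ coming from the single $\go=-1$ site at $\tT_i$ and the entropic weight of one excursion, which is $O(1)$ if the trajectory enters the stretch from a nearby contact; neither can cancel $e^{\Theta(h\xi_i)}$. Consequently your proposed fix (an ``internal, probability-weighted coarse-graining within each block'') cannot make the combined product stay below $1$ up to scale $L$; and if you instead concede the factor $e^{h\xi_i}$ to each intermediate stretch, a short computation ($\tilde\bE[\xi_1\ind_{\{\xi_1>C_{10}h^{-1}\}}]\asymp h^{\tga-1}$ times $\asymp L^{\tga}$ stretches per block) shows you recover only the rough bound $h^{\tga}$, with no logarithmic gain.

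The paper's resolution is essentially non-local and partly probabilistic, and this is what your plan is missing. It keeps the coarse-graining threshold at $C_{10}h^{-1}$ and groups blocks into metablocks, each beginning with one stretch $\xi_{J_n+1}\ge C_{10}h^{-1}$ followed by the run of small blocks up to the next such stretch. Lemma \ref{parigolo} shows that traversing that run costs an \emph{accumulated} entropy $\prod_{i}(1-C_{14}\xi_i/\tT_{i-1})\le(\xi_{J_n+1}/\cT_{n+1})^{C_{14}/2}$, which beats the energy gain $e^{h\xi_{J_n+1}}\le h^{-C_{12}}$ of an intermediate-length leading stretch \emph{provided the metablock is long} ($J_{n+1}-J_n\ge h^{-1-\gd}$, with $C_{12}$ chosen \emph{small}, not large). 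When the metablock is short this argument fails and the paper does not suppress the contribution at all; instead it shows in \eqref{contcomp} that $\tilde\bP(J_1\le h^{-1-\gd})\le C h^{\tga-1-\gd}$, so these blocks are rare enough that their expected contribution is negligible. Your scheme has no analogue of either the accumulated-entropy mechanism or the rarity estimate, and without them the claimed bound $C_2h^{\tga}|\log h|^{1-\tga}$ is not reached.
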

\begin{proof}
We define the sequence $(J_i)_{i\ge 0}$ as $J_0:=0$, and
\begin{equation}
 J_{i+1}:=\inf\{n > J_i, \xi_{n+1}\ge C_{10} h^{-1}\},
\end{equation}
with the constant $C_{10}$ given in Lemma \ref{etap2}.
Furthermore one sets
\begin{equation}
 \cT_N:=\tilde T_{J_N}. 
\end{equation}
We have cut the system in \textit{metablocks} composed
of one block bigger than $C_{10} h^{-1}$, and then other smaller blocks.
As the free-energy is a limit in the almost sure sense, 
conditioning to an event of positive 
probability (for the environment) is harmless. For matters of translation invariance
 (we want the sequence $\left\{(\go_n)_{n\in(\cT_N,\cT_{N+1}]}\right\}_{N\ge0}$ to be i.i.d.) 
we choose to observe an environment conditioned to 
satisfy $\xi_1\ge C_{10} h^{-1}$.  We denote this conditioned probability by $\tilde \bP^{(1)}$.

\begin{figure}[htbp]
\centerline{
\psfrag{0}{$0$}
\psfrag{xi1}{$\xi_1$}
\psfrag{xi2}{$\xi_2$}
\psfrag{xi3}{$\xi_3$}
\psfrag{xiJ1}{$\xi_{J_1}$}
\psfrag{xiJ11}{$\xi_{J_1+1}$}
\psfrag{T1}{$\tT_1$}
\psfrag{T2}{$\tT_2$}
\psfrag{TJ1}{$\cT_1=\tT_{J_1}$}
\psfrag{TJ11}{$\tT_{J_1+1}$}
\psfig{file=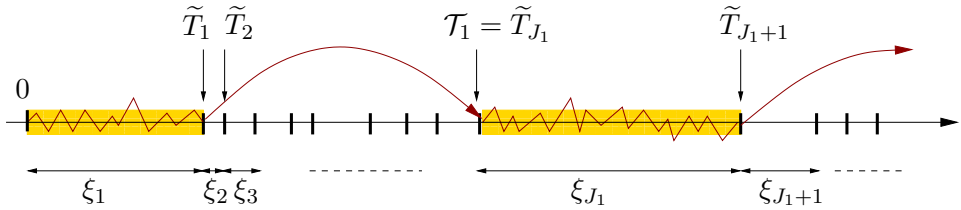,width=5in} }
  \begin{center}
 \caption{ The above figure represents the decomposition of our environment according to the metablocks $(\cT_{i-1},\cT_i]$,
constituted of $J_i-J_{i-1}$ unit blocks entities.
A metablock is composed of unit block larger than $C_{10}h^{-1}$ followed by a 
sequence of $J_{i}-J_{i-1}-1$ (possibly equal to zero but that is quite rare) smaller unit blocks.
As we explained in Section \ref{sec:strategy}, the trajectory of the polymer targets the blocks with $\xi_i \geq h^{-1}|\log h|$.
Our proof, and in particular Lemma \ref{tosto} confirms this idea. It also says that regions of smaller length but located close to each other
could possibly contribute to the free energy, but the quantitative estimates in
equation \eqref{contcomp} show that this contribution is negligible.}   
  \end{center}
\label{figmetablocks}
\end{figure}

In analogy with Lemma \ref{lem1} one has the following decomposition for the partition function
\begin{equation}\label{finerdeco}
 Z^{\go,\gb}_{\cT_N,h}\le \prod_{i=1}^N \left(\max_{x\in (\cT_{i-1}, \cT_{i}]}
Z^{\go,\gb}_{[x,\cT_{i}],h}\right)\vee 1.
\end{equation}
(the proof being exactly the same). 
This allows to treat the contribution to  $Z^{\go,\gb}_{\cT_N,h}$ of the different segments $(\cT_i,\cT_{i+1}]$ separately. 

Now what we show is that the segment $(\cT_{i}, \cT_{i+1}]$ gives a contribution to the free energy only 
if one of the two following condition is satisfied:
\begin{itemize}
 \item $\xi_{J_{i}+1}$ is much larger than $C_{10} h^{-1}$ (by a factor $|\log h|$),
 \item $J_{i+1}-J_i$ is unusually small.
\end{itemize}
In the other cases, we show that
the energy gain that one has on the block $(\cT_{i},\tT_{J_i+1}]$ is overcome by
the entropic cost of touching the defect line on the segment $(\tT_{J_i+1},\cT_{{i+1}}]$.

\begin{lemma} \label{tosto}
For any $n \ge 0$, any $\delta>0$ there exists a constant $C_{12}$ depending on $\gb$ and $\delta$ such that 
if $\xi_{J_{n}+1} < C_{12} h^{-1} |\log h|$ and $J_{n+1}-J_{n} \ge h^{-1-\delta} $, then 
\begin{equation}\label{youhou}
 \max_{x\in (\cT_{n}, \cT_{n+1}]}
Z^{\go,\gb,}_{[x,\cT_{n+1}]}\le 1.
\end{equation}
If  $\xi_{J_{n}+1} \ge C_{12} h^{-1} |\log h|$ or $J_{n+1}-J_{n} \le h^{-1-\delta} $
then
\begin{equation}\label{syouhou}
 \max_{x\in (\cT_{n}, \cT_{n+1}]}
Z^{\go,\gb}_{[x,\cT_{n+1}]}\le e^{h \xi_{J_n+1}}.
\end{equation}

\end{lemma}

We postpone the proof of the Lemma to the end of the section and prove Proposition~\ref{alphapgq1} now.

Combining Lemma \ref{tosto} and the decomposition \eqref{finerdeco} one gets that

\begin{equation}
 \log Z^{\go,\gb}_{\cT_N,h}\le h \sum_{n=0}^{N-1}  \xi_{J_n+1} 
\ind_{\{\xi_{J_{n}+1}\ge C_{12}h^{-1}|\log h| \text{ or } J_{n+1}-J_n \le h^{-1-\delta}\}}
\end{equation}
Note that
the terms in the sum of right-hand side are i.i.d.\ distributed and have finite mean.
Therefore using twice the law of large numbers, one gets
\begin{multline}
 \tf(\gb,h)\leq\lim_{N\to \infty} \frac{N}{\cT_N}\frac{1}{N}h \sum_{n=0}^{N-1}  \xi_{J_n+1} 
\ind_{\{\xi_{J_{n}+1}\ge C_{12}h^{-1}|\log h| \text{ or } J_{n+1}-J_n \le h^{-1-\delta}\}}
\\ = \frac{h}{\tilde \bE^{(1)}\left[\cT_1\right]} \tilde \bE^{(1)}\left[ \xi_{1} 
\ind_{\{\xi_{1}\ge C_{12} h^{-1}|\log h| \text{ or } J_1 \le h^{-1-\delta}\}}\right].
\end{multline}
From its definition one has
\begin{equation}
 \tilde \bE^{(1)}\left[\cT_1\right]=\frac{\tilde \bE[\xi_1\ind_{\{\xi_1\ge C_{10} h^{-1}\}}]}{\tilde \bP[\xi_1\ge C_{10}h^{-1}]}+ \tilde \bE[J_1-1]\frac{\tilde 
\bE\left[\xi_1\ind_{\{\xi_1< C_{10} h^{-1}\}}\right]}{\tilde \bP[\xi_1< C_{10}h^{-1}]}
=\frac{\tilde \bE[\xi_1]}{\tilde \bP\left[\xi_1\ge C_{10}h^{-1}\right]},
\label{estimcT1}
\end{equation}
 where the last equality comes from the fact that $J_1$ is a geometric variable of parameter $\tilde\bP(\xi_1\geq C_{10}h^{-1})$.
It remains to estimate
\begin{multline}
 \tilde \bE^{(1)}\left[ \xi_{1} 
\ind_{\{\xi_{1}\ge C_{12}h^{-1}|\log h| \text{ or } J_1 \le h^{-1-\delta}\}}\right]\\
\le
\tilde \bE^{(1)}\left[ \xi_{1} \ind_{\{\xi_{1}\ge C_{12} h^{-1}|\log h|\}}\right]+
  \tilde \bE^{(1)}\left[ \xi_{1} \ind_{\{ J_1 \le h^{-1-\delta}\}}\right].
\end{multline}
 The first term gives the main contribution, it is equal to
\begin{equation}
 \frac{\tilde \bE\left[\xi_1 \ind_{\{\xi_1\ge C_{12}h^{-1}|\log h|\}}\right]}{\tilde \bP\left[\xi_1\ge C_{10} h^{-1}\right]}.
\end{equation}
The second one is equal to 
\begin{equation}
 \frac{\tilde \bE\left[\xi_1 \ind_{\{\xi_1\ge C_{10}h^{-1}\}}\right]}{{\tilde \bP\left[\xi_1\ge C_{10} h^{-1}\right]}}
 \tilde \bP\left[ J_1\le h^{-1-\delta}\right],
\end{equation}
so that overall 
\begin{multline}
 \tf(\gb,h)\le h(\tilde\bE[\xi_1])^{-1}\\
\left(\tilde \bE\left[\xi_1 \ind_{\{\xi_1\ge C_{11}h^{-1}|\log h|\}}\right]+
\tilde \bE\left[\xi_1 \ind_{\{\xi_1\ge C_{10}h^{-1}}\}\right]\tilde \bP\left[ J_1\le h^{-1-\delta}\right]\right).
\end{multline}
Then one can check, using \eqref{tidd}, that there exists $C_{13}$ such
that
\begin{equation}\label{contcomp}
\begin{split}
\tilde \bE\left[\xi_1 \ind_{\{\xi_1\ge C_{10}h^{-1}\}} \right] &\leq C_{13} h^{\tga-1}, \\
\tilde \bE[ \xi_1\ind_{\{\xi_1\ge C_{13} h^{-1}|\log h|\}} ]&\le C_{13} |\log h|^{1-\tilde \alpha} h^{\tilde \alpha -1},\\
\tilde \bP[ J_1 \le h^{-1-\delta} ]&\leq h^{-1-\gd} \tilde\bP(\xi_1\ge C_{10}h^{-1})\le  C_{13}  h^{\tilde \alpha-1-\delta},
\end{split}
\end{equation}
which is enough to conclude.
\end{proof}

\begin{proof}[Proof of Lemma \ref{tosto}]
We start by remarking that by translation invariance (from our choice to impose that $\xi_1\geq C_{10}h^{-1}$) 
it is sufficient to prove the result in the case $n=0$.

We have to control the value of   $Z^{\go,\gb}_{[x,\cT_{1}],h}$ for every $x\in(0,\cT_{1}]$.
We start with the easier case $x>\tT_{1}$. 
In that case we can use the strategy of the previous section:
supposing that $x\in (\tT_{a},\tT_{a+1}]$ then one has (exactly like in the proof of Lemma \ref{lem1}),
\begin{equation}\label{decoco}
 Z^{\go,\gb}_{[x,\cT_1],h}\le \prod_{i=a+1}^{J_{1}}\left[ \left(\max_{y\in (\tT_{i-1}, \tT_{i}]}
Z^{\go,\gb}_{[y,\tT_{i}],h}\right)\vee 1\right].
\end{equation}
and one can show that all the terms in the product on the right hand-side are equal to one, since all blocks
$[\tT_{i-1},\tT_i]$ are smaller than $C_{10}h^{-1}$ (cf.\ Lemma \ref{etap2}).

\smallskip
To prove \eqref{syouhou} one also uses equation \eqref{decoco}, and then Lemma \ref{etap2} 
to bound the different factors of the product on 
the right-hand side. 

\smallskip
Now we turn to the case $x\in(0,  \tT_{1}]$, $\xi_{1}\le C_{11} h^{-1}|\log h|$, $J_1\ge h^{-(1+\delta)}$.
We use the following refinement of our block decomposition
\begin{lemma}\label{parigolo}
For any  $x\in (0,  \tT_{1}]$ there exists a constant $C_{14}\in(0,1)$ (depending on $\beta$, but not on $C_{12}$) such that 
\begin{equation}
 Z^{\go,\gb}_{[x,\cT_1],h}\le e^{\xi_{1} h} \prod_{i=2}^{J_{1}} 
\left( 1- C_{14}\frac{\xi_{i}}{\tT_{i-1}}\right).
\end{equation}
\end{lemma}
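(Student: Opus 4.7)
My plan is to refine the block decomposition of Lemma \ref{lem1} by treating the first block separately and then iterating a sharper per-block estimate. For the first block $(0,\tT_1]$, the crude bound
\[
Z^{\go,\gb}_{[x,\tT_1],h}\;\leq\; e^{h(\tT_1-x)}\;\leq\; e^{h\xi_1},
\]
obtained by using only that $\go\leq 0$ (so every contact contributes at most $e^h$ to the Boltzmann weight), accounts for the leading factor of the statement.

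For each subsequent block $(\tT_{i-1},\tT_i]$ with $i\in\{2,\dots,J_1\}$ (for which $\xi_i<C_{10}h^{-1}$), I apply the Markov property of $\tau$ at the last contact in $[0,\tT_{i-1}]$ and decompose over the position of the first renewal after $\tT_{i-1}$, exactly as in the proof of Lemma~\ref{lem1}, so that the block-$i$ contribution conditioned on the last contact $y\leq\tT_{i-1}$ takes the form
\[
\bE_y\!\left[\exp\!\left(\sum_{n=\tT_{i-1}+1}^{\tT_i}(h+\gb\go_n)\ind_{\{n\in\tau\}}\right)\right].
\]
I then split trajectories according to whether the first renewal after $\tT_{i-1}$ is absent from the block, falls in the $\go=0$ part $(\tT_{i-1},\tilde\tau_{2i-1}]$, or falls in the $\go=-1$ part $(\tilde\tau_{2i-1},\tT_i]$. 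Since $h\xi_i\leq C_{10}$ is bounded, a contact in the $-1$ region yields a Boltzmann weight at most $e^{h\xi_i-\gb}$, strictly less than $1$, whereas an absent-contact trajectory contributes exactly $1$ and contacts in the $\go=0$ part contribute at most $e^{h\xi_i}$. The target is to prove that, uniformly in $y$, the conditional probability mass on trajectories hitting the $\go=-1$ region is at least $c_\gb\,\xi_i/\tT_{i-1}$, so that the weighted combination of the three cases is bounded by $1-C_{14}\xi_i/\tT_{i-1}$.

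The hard part will be establishing this $\xi_i/\tT_{i-1}$ probability estimate uniformly in $y\leq\tT_{i-1}$. For $y$ moderately close to $\tT_{i-1}$ I would use the renewal theorem $\bP(n\in\tau)\to 1/\bE[\tau_1]>0$ (available because $\alpha>1$) to get a positive density of first-contact points in the $-1$ part of block $i$; for $y$ at distance comparable to $\tT_{i-1}$ I would use the tail $K(n)\sim c_K\,n^{-1-\alpha}$ to bound $\bP_y(\tau_1\in(\tT_{i-1},\tT_i])$ from below, and then trade the extra polynomial factor in $\tT_{i-1}$ against the bounded potential gain $e^{h\xi_i}\leq e^{C_{10}}$ that $\go=0$ contacts could provide. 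Once the single-block estimate $\leq 1-C_{14}\xi_i/\tT_{i-1}$ is in place, iterating it over $i=2,\dots,J_1$ and multiplying by the first-block factor $e^{h\xi_1}$ yields the claimed product, with $C_{14}$ depending on $\gb$ (through the gain/loss comparison) but not on $C_{12}$, which only enters in the subsequent use of the lemma.
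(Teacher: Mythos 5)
Your plan has the same architecture as the paper's proof: induction over the blocks, conditioning on the last contact $z\le\tT_{i-1}$ so that the entry point into block $i$ has law $K(\cdot-z)/\bar K(\tT_{i-1}-z)$, a uniform lower bound of order $\xi_i/\tT_{i-1}$ on the entry probability (this is exactly \eqref{estimmin1}, and it is a direct computation from $K(n)\asymp n^{-1-\alpha}$ — the renewal theorem plays no role there), and the crude first-block bound $e^{h\xi_1}$. The gap is in your per-block trichotomy. If you classify trajectories by where the \emph{first} renewal after $\tT_{i-1}$ lands and bound the weight by $e^{h\xi_i}$ when it lands in the $\go=0$ part and by $e^{h\xi_i-\gb}$ when it lands in the $\go=-1$ part, you get
\begin{equation*}
p_{\mathrm{abs}}+p_0\,e^{h\xi_i}+p_{-1}\,e^{h\xi_i-\gb}\;\le\;1+p_0\bigl(e^{h\xi_i}-1\bigr)-p_{-1}\bigl(1-e^{h\xi_i-\gb}\bigr),
\end{equation*}
and you need the negative term to dominate. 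But $e^{h\xi_i}-1$ is only bounded by the fixed constant $e^{C_{10}}-1$ (it does not become small), while $p_0/p_{-1}$ is essentially the ratio of the lengths of the $\go=0$ part and the $\go=-1$ part of the block, which is unbounded since both are heavy-tailed. In a block whose $\go=0$ part is much longer than its $\go=-1$ part, the energetic gain of trajectories entering through the $0$-part is not compensated by the (tiny) probability of entering directly through the $-1$-part, and the claimed bound $1-C_{14}\xi_i/\tT_{i-1}$ fails.

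The correct mechanism — which is what Lemma \ref{etap2} already encodes, and which the paper simply reuses here — extracts the discount not from where the first renewal lands but from the event that the trajectory, having entered the block \emph{anywhere}, later touches the right endpoint $\tT_i$, where $\go_{\tT_i}=-1$. Since $\alpha>1$, the renewal theorem gives $\inf_{n\ge1}\bP(n\in\tau)>0$, so conditionally on entry at any point the block's weight is at most $e^{h\xi_i}\bigl(1-(1-e^{-\gb})\inf_{n\ge1}\bP(n\in\tau)\bigr)\le 1-C_{10}$ once $\xi_i<C_{10}h^{-1}$ and $C_{10}$ is small enough. Every entered block therefore costs a fixed factor $1-C_{10}$, and multiplying by the entry probability $\ge c\,\xi_i/\tT_{i-1}$ yields $1-C_{14}\xi_i/\tT_{i-1}$. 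Your phrase ``trajectories hitting the $\go=-1$ region'' gestures at this, but it is incompatible with the first-renewal trichotomy you set up: what you need is a constant lower bound on the hitting probability \emph{conditionally on entry}, not a lower bound on the probability that the first renewal lands on a $-1$ site.
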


\begin{proof}
For notational convenience we also restrict to the case 
$x=0$, but the proof works the same for all values of $x$.

We prove by induction on $j$, that for any $j\in [1, J_{1}]$,
\begin{equation}
 Z^{\go,\gb}_{\tT_{j},h}\le \exp(\xi_{1} h) \prod_{i=2}^{j} \left( 1- C_{14}\frac{\xi_{i}}{\tT_{i-1}}\right).
\end{equation}
 The case $j=1$ is just the second point of Lemma \ref{etap2}. Then for the induction step one remarks that
\begin{equation}
 \frac{Z^{\go,\gb}_{\tT_{j+1},h}}{Z^{\go,\gb}_{\tT_j,h}}=
\bE^{\go,\gb}_{\tT_j,h}
\left[\exp\left(\sum_{k=\tT_{j}+1}^{\tT_{j+1}}(\gb\go_n+h)\ind_{\{k\in \tau\}}\right)\right].
\end{equation}
Define
\begin{equation}\begin{split}
 \tau_{\mathrm{prev}}^{(j)}&:= \max\{\tau_k \ | \ \tau_k \le \tT_{j}\}, \\
 \tau_{\mathrm{next}}^{(j)}&:= \min\{\tau_k \ | \ \tau_k > \tT_{j}\}.
\end{split}
\end{equation}
One can notice that the distribution of $ \tau_{\mathrm{next}}^{(j)}$ knowing  $\tau_{\mathrm{prev}}^{(j)}$ under  $\bP^{\go,\gb}_{\tT_j,h}$
does not depend on $\go$ nor $\gb$ and that one has
(recall $\bar K(n):=\bP(\tau_1>n)$)
\begin{equation}
 \bP^{\go,\gb}_{\tT_j,h}( \tau_{\mathrm{next}}^{(j)}=y | \tau_{\mathrm{prev}}^{(j)}=z)=\frac{K(y-z)}{\bar K(\tT_{j}-z)}.
\end{equation}

Therefore
\begin{multline}
 \bE^{\go,\gb}_{\tT_{j},h}
\left[\exp\left(\sum_{k=\tT_{j}+1}^{\tT_{j+1}}(\gb\go_n+h)\ind_{\{k\in \tau\}}\right)\right]\\=
\sum_{y=\tT_{j}+1}^{\tT_{j+1}}\bP^{\go,\gb}_{\tT_{j},h}\left( \tau_{\mathrm{next}}^{(j)}=y\right) Z^{\go,\gb}_{[y,\tT_{j+1}],h}
+\bP^{\go,\gb}_{\tT_{j},h}\left(\tau_{\mathrm{next}}^{(j)}>\tT_{j+1}\right)\\
\le \max_{z\in [0,\tT_{j}]}\left[ 
\frac{\sum_{t=\tT_{j}+1}^{\tT_{j+1}}K(t-z)}{\bar K(\tT_{j}-z)} 
\max_{y\in (\tT_{j},\tT_{j+1}]} Z^{\go,\gb}_{[y,\tT_{j+1}],h}+
\frac{\bar K(\tT_{j+1}-z)}{\bar K(\tT_{j}-z)}\right].
\end{multline}
From our definitions, we know that $\xi_{j+1}\leq C_{10}h^{-1}$ for all $j\in [1,J_1-1) $,
and therefore Lemma \ref{etap2} gives an upper bound to
the partition functions $Z^{\go,\gb}_{[y,\tT_{j+1}],h}$, for $y\in (\tT_{j},\tT_{j+1}]$.
\begin{multline}
 \bE^{\go,\gb}_{\tT_{j},h}
\left[\exp\left(\sum_{k=\tT_{j}+1}^{\tT_{j+1}}(\gb\go_n+h)\ind_{\{k\in \tau\}}\right)\right]
\le 1-C_{10}\min_{z\in  [0,\tT_{j}]}
\frac{\sum_{t=\tT_{j}+1}^{\tT_{j+1}}K(t-z)}{\bar K(\tT_{j}-z)}.
\label{eq:finlemparigolo}
\end{multline}
From there, we finish the proof by remarking that from our assumption on $K(\cdot)$ (and using the change of variable $z'= \tT_j-z$), 
there exist constants $C_{14}$ and $C_{15}$
such that
\begin{equation}
 \min_{z\in  [0,\tT_{j}]} \frac{\sum_{t=\tT_j+1}^{\tT_{j+1}}K(t-z)}{\bar K(\tT_j-z)} \geq 
C_{15} \min_{z'\in[0,\tT_j]} (z'+1)^{\alpha}\sum_{u=1}^{\xi_{j+1}}(z'+u)^{-(1+\alpha)}\geq C_{14}/C_{10}\frac{\xi_{j+1}}{\tT_j}.
\label{estimmin1}
\end{equation}
where the last inequality comes from a straightforward computation.


%
\end{proof}

We can now finish the proof of Lemma \ref{tosto}.
Note that for all $j\in [2, J_1]$ one has $\xi_{j}/ \tT_{j-1}\le \xi_j/\xi_1 \leq 1$ so that if $C_{14}<1$ one has
\begin{equation}
 \log \prod_{j=2}^{J_1} \left(1- C_{14} \frac{\xi_{j}}{ \tT_{j-1}}\right) \le -C_{14}  \sum_{j=2}^{J_1}\frac{\xi_{j}}{ \tT_{j-1}}.
\end{equation}
Then one remarks that
\begin{equation}
 \sum_{j=2}^{J_1}\frac{\xi_{j}}{ \tT_{j-1}}\ge \sum_{j=2}^{J_1} \sum_{i=\tT_{j-1}+1}^{\tT_j} \frac{1}{i}\ge  \frac{1}{2}\log(\cT_1/\xi_1).
\label{sumxi}
\end{equation}
Given our assumptions $\cT_1\ge J_1\ge h^{-(1+\delta)}$ and $\xi_1\le C_{12} h^{-1} |\log h|$, one has that
$\log(\cT_1/\xi_1)$ is larger than $\frac{\gd}{2}|\log h|$ if $C_{12}$ is small enough.
Then using Lemma \ref{parigolo} one gets that 
\begin{equation}
\log Z_{[x,\cT_1]}^{\go,\gb}\le  (C_{12}-C_{14}\frac{\gd}{4})|\log h|\le 0
\end{equation}
if $C_{12}$ has been chosen small enough.
\end{proof}


\section{Upper bound on the free energy when $\alpha<1$}
\label{conrr}

The case $\alpha<1$ is a bit more difficult than the case $\alpha>1$. The reason is that
one has not $\inf_{n\in \N} \bP(n\in \tau)>0$ (which was really crucial to prove Lemma \ref{etap2})
and one has to  replace this by technical estimates on the renewal  (for example Lemma \ref{lemtecr}) 
that are a bit more difficult to work with.

\smallskip

We have to change the length of the blocks in our coarse graining procedure, and therefore we renew our definition of $\tT$ and $\xi$ 
for this section. Let $C_{16}$ be a fixed (small) constant (how small is to be decided in the proof).
Set $L=L(h):= \lfloor C_{16} h^{-1/\alpha} \rfloor$.

In analogy with the previous section, define 
\begin{equation}\begin{split}
\tilde T_i&:= \tilde \tau_{iL},\ \forall i\geq0, \\
\xi_i&:= \tilde T_{i}-\tilde T_{i-1},\ \forall i\geq1.
\end{split}
\label{newdefTxi}
\end{equation}

As for the case $\alpha>1$ the proof simplifies considerably if one drops the $|\log h|$ factor in the result. We
expose first this simpler proof in the next Section.  Then in Section  \ref{alphafb} 
we refine the argument in order to get the exact upper bound in \eqref{fre1}.

\subsection{Rough bound}

The result we prove in this section is 
\begin{proposition}\label{rabto}
 When $\alpha<1$, one can find a constant $C_2$ such that
\begin{equation}
 \tf(\gb,h)\le  C_2 h^{\frac{\tilde \alpha}{\alpha}}. 
\end{equation}
 \end{proposition}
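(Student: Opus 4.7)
The plan is to adapt the strategy of Section~\ref{tsw} to the case $\alpha<1$, navigating two complications. First, the bound $\inf_{n\ge 1}\bP(n\in\tau)>0$ underlying Lemma~\ref{etap2} is no longer available and must be replaced by the renewal asymptotic $\bP(n\in\tau)\ge c_\tau n^{\alpha-1}$ (the technical estimate that will be stated as Lemma~\ref{lemtecr}). Second, the natural block scale is now $h^{-1/\alpha}$ rather than $h^{-1}$, reflecting that $\tf(h)\asymp h^{1/\alpha}$ in this regime, so that $h^{-1/\alpha}$ is the length at which a $0$-stretch starts to carry significant homogeneous pinning.

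With $\tT_i=\tilde\tau_{iL}$ and $\xi_i=\tT_i-\tT_{i-1}$, I would take $L=\lfloor C_{16}h^{-1/\alpha}\rfloor$ even, so that $\go_{\tT_i}=-1$ for every $i$. The coarse graining of Lemma~\ref{lem1} goes through verbatim and gives
\begin{equation*}
Z^{\go,\gb}_{\tT_N,h}\le \prod_{i=1}^N\Bigl[\Bigl(\max_{x\in(\tT_{i-1},\tT_i]}Z^{\go,\gb}_{[x,\tT_i],h}\Bigr)\vee 1\Bigr].
\end{equation*}

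The technical core is a substitute for Lemma~\ref{etap2}. Letting $M_i:=\max\{\tilde\tau_j-\tilde\tau_{j-1}:(i-1)L<j\le iL\}$ denote the longest $\tilde\tau$-gap in block $i$, I would show that for suitable constants $C_{17},C_{18}$ (with $C_{16}$ chosen small enough at the end):
\begin{itemize}
\item[(a)] if $M_i\le C_{17}h^{-1/\alpha}$, then $\max_x Z^{\go,\gb}_{[x,\tT_i],h}\le 1$,
\item[(b)] in every case $\max_x Z^{\go,\gb}_{[x,\tT_i],h}\vee 1\le e^{C_{18}\xi_i\tf(h)}$.
\end{itemize}
Bound~(b) follows from $Z^{\go,\gb}\le Z_h$ (dropping the non-positive $\gb\go_n\le 0$ terms) combined with $Z_{N,h}\le Ce^{N\tf(h)}$, itself a consequence of the subadditivity of $\log(e^hZ_{N,h})$ exploited in the proof of Lemma~\ref{totti}. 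Bound~(a) is the delicate step. The idea is to iterate the per-stretch estimate from Section~\ref{tsw}: for every even $j$, since $\go_{\tilde\tau_j}=-1$, one has $Z^{\go,\gb}_{[x,\tilde\tau_j],h}\le e^{h(\tilde\tau_j-x+1)}\bigl[1-(1-e^{-\gb})\bP(\tilde\tau_j-x\in\tau)\bigr]$. Accumulating the resulting discounts over the $L/2$ even $\tilde\tau$-endpoints of the block in a telescoping fashion (in the spirit of Lemma~\ref{parigolo}), and using the position-dependent lower bound $\bP(n\in\tau)\ge c_\tau n^{\alpha-1}$ from Lemma~\ref{lemtecr} together with the constraint $M_i\le C_{17}h^{-1/\alpha}$ on the homogeneous gain, the total discount dominates the factor $e^{h\xi_i}$ provided $C_{16}$ has been chosen small enough.

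Finally, the blocks are i.i.d.\ and two applications of the law of large numbers (to $\sum_i\log\max_xZ^{\go,\gb}_{[x,\tT_i],h}$ and to $\tT_N/N$) yield
\begin{equation*}
\tf(\gb,h)\le C_{18}\,\tf(h)\,\frac{\tilde\bE\bigl[\xi_1\ind_{\{M_1\ge C_{17}h^{-1/\alpha}\}}\bigr]}{\tilde\bE[\xi_1]}.
\end{equation*}
Since $\tilde\bE[\xi_1]=L\tilde\bE[\tilde\tau_1]\asymp h^{-1/\alpha}$, $\tf(h)\asymp h^{1/\alpha}$, and, using the tail $\tilde K(n)\asymp n^{-(1+\tilde\alpha)}$ together with $\tilde\bP(M_1\ge y)\le L\bar{\tilde K}(y)\asymp Ly^{-\tilde\alpha}$, a direct tail integration gives $\tilde\bE[\xi_1\ind_{\{M_1\ge C_{17}h^{-1/\alpha}\}}]\asymp h^{(\tilde\alpha-2)/\alpha}$, the three factors combine to yield $\tf(\gb,h)\lesssim h^{\tilde\alpha/\alpha}$, as desired. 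The main obstacle is the telescoping in~(a): the position-dependent discount $1-c_\tau n^{\alpha-1}$ has to be carefully summed across the $L/2$ even endpoints to overcome the potentially large factor $e^{h\xi_i}$, whereas in the $\alpha>1$ case a single uniform discount per block was enough.
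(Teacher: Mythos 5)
Your overall architecture (blocks of $L\asymp h^{-1/\alpha}$ renewal gaps, a dichotomy on blocks, two applications of the law of large numbers, then a tail estimate giving $L^{2-\tilde\alpha}$) is the paper's, and your final arithmetic would produce the right power. But the dichotomy in your step (a) is wrong, and the max-based coarse graining of Lemma \ref{lem1} is not strong enough when $\alpha<1$. Claim (a) is false as stated: controlling the \emph{longest} gap $M_i$ does not control the block. Consider a block in which every $0$-stretch has length $\ell\asymp C_{17}h^{-1/\alpha}$ and every $(-1)$-stretch has length $1$: then $M_i\le C_{17}h^{-1/\alpha}$ but $\xi_i\asymp h^{-2/\alpha}$, the homogeneous reward available across the block is $e^{(1+o(1))\xi_i\tf(h)}=e^{c h^{-1/\alpha}}$, while the penalty for crossing the $(-1)$'s is only of order $e^{-\beta\,(\text{number of }(-1)\text{ sites})\times\tf'(h)}\asymp e^{-Ch^{-1}}$, negligible since $h^{-1}\ll h^{-1/\alpha}$; hence $\max_x Z^{\go,\gb}_{[x,\tT_i],h}\gg1$. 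A single $0$-stretch of length $C_{17}h^{-1/\alpha}$ is indeed not worth targeting (the lower-bound strategy of Section \ref{seclb} needs length $\gtrsim h^{-1/\alpha}|\log h|$ to beat the entrance cost), but a long run of such stretches traversed without depinning is very profitable. The correct dichotomy, that of Lemma \ref{superlem1}, is on the \emph{total} block length: $\xi_i<2mL$ (no contribution) versus $\xi_i\ge 2mL$ (contribution at most $e^{C_{18}h^{1/\alpha}\xi_i}$, but such blocks are rare, $\tilde\bE[\xi_1\ind_{\{\xi_1\ge2mL\}}]\le CL^{2-\tilde\alpha}$ by the single-big-jump tail). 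The bound $\xi_i\le 2mL$ is exactly what guarantees that the $(-1)$'s have density at least $1/(8m)$ in the block, so that the hitting estimate applies --- and note that Lemma \ref{lemtecr} is that hitting estimate for dense sets, not the statement $\bP(n\in\tau)\ge c n^{\alpha-1}$ you attribute to it.

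There is a second, independent obstruction: even for a good block ($\xi_i<2mL$, all gaps small), the quantity $\max_{x\in(\tT_{i-1},\tT_i]}Z^{\go,\gb}_{[x,\tT_i],h}$ produced by Lemma \ref{lem1} genuinely exceeds $1$. Take $x$ at the start of a $0$-stretch of length $g_0$ with $h^{-1}\ll g_0\le C_{17}h^{-1/\alpha}$ sitting at the right end of the block, followed by a $(-1)$-stretch of bounded length: the homogeneous gain is $\ge 1+c\,h g_0^{\alpha}$ while the probability of hitting the few remaining $(-1)$ sites is $O(g_0^{\alpha-1})$, which is smaller since $hg_0\gg1$. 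Because $\inf_n\bP(n\in\tau)=0$ for $\alpha<1$, no single terminal $(-1)$ gives a uniform discount, and accumulating these excesses over blocks yields a bound no better than order $h^{1+\tilde\alpha-\alpha}$, which is $\gg h^{\tilde\alpha/\alpha}$ since $\alpha<\tilde\alpha$. This is precisely why the paper replaces Lemma \ref{lem1} by the $K$-weighted decomposition of Lemma \ref{lem2}: the entrance weights $K(x+y)/\bar K(y)$ give the early entrances $x\le L/4$ a proportion of the total weight bounded below uniformly in $y$, and those entrances carry the uniform discount $1-C_{21}$ of Lemma \ref{superlem}, which absorbs the $1+C_{22}$ from late entrances. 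Finally, a smaller point on your (b): subadditivity of $\log(e^hZ_{N,h})$ gives the \emph{lower} bound $Z_{N,h}\ge e^{-h}e^{N\tf(h)}$ (that is the content of Lemma \ref{totti}), not an upper bound; the estimate you need is obtained in the paper by computing $Z_{2mL,h}\le 1+C_{22}$ directly from the tail of $|\tau\cap[1,N]|$ and then iterating sub-multiplicativity to get $e^hZ_{N,h}\le e^{C_{18}h^{1/\alpha}N}$ for $N\ge 2mL$.
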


In order to do so, we prove an asymptotic upper bound for $Z_{\tT_N,h}^{\go,\gb}$.
The first step is  a coarse-graining decomposition of $Z_{\tT_N,h}^{\go,\gb}$ that
allows to treat the contribution of each segment $(\tT_n,\tT_{n+1}]$
separately.
It turns out that we need something a bit more sophisticated than Lemma \ref{lem1}.
\begin{lemma}\label{lem2}
For every $N\in \N$ 
\begin{equation}
  Z^{\go,\gb}_{\tilde T_{N},h} \le \prod_{n=1}^{N} \max_{y\in[0,\tT_n]} \left[
\sum_{x=1}^{\xi_n}\frac{K(x+y)}{\bar K(y)}Z_{[\tT_{n-1}+x,\tT_{n}],h}^{\go,\gb}+\frac{\bar K(\xi_n+y)}{\bar K(y)}\right].
\end{equation}
\end{lemma}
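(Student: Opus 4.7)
The plan is to prove the bound by induction on $N$, refining the decomposition used in Lemma~\ref{lem1} so as to retain the full distributional information on the location of the last contact of $\tau$ before $\tT_{n-1}$, rather than collapsing this information into a maximum. The key observation, already exploited in the proof of Lemma~\ref{parigolo}, is that under the tilted measure $\bP^{\go,\gb}_{\tT_N,h}$, the conditional law of $\tau^{(N)}_{\mathrm{next}}$ given $\tau^{(N)}_{\mathrm{prev}}=z$ is $K(\,\cdot\,-z)/\bar K(\tT_N-z)$, and in particular is independent of $\go$ and $\gb$.

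For the base case $N=1$, decomposing on the value of the first point of $\tau$ gives
\begin{equation}
Z^{\go,\gb}_{\tT_1,h}=\sum_{x=1}^{\xi_1} K(x)\,Z^{\go,\gb}_{[x,\tT_1],h}+\bar K(\xi_1),
\end{equation}
which is precisely the bracketed expression evaluated at $y=0$ (since $\bar K(0)=1$), and is therefore a fortiori bounded by its maximum over $y\in[0,\tT_1]$.

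For the inductive step I would use the Markov property of $\tau$ at $\tT_N$ to write
\begin{equation}
\frac{Z^{\go,\gb}_{\tT_{N+1},h}}{Z^{\go,\gb}_{\tT_N,h}}=\bE^{\go,\gb}_{\tT_N,h}\!\left[\exp\!\left(\sum_{k=\tT_N+1}^{\tT_{N+1}}(\gb\go_k+h)\ind_{\{k\in\tau\}}\right)\right],
\end{equation}
and then decompose on the pair $(\tau^{(N)}_{\mathrm{prev}},\tau^{(N)}_{\mathrm{next}})$. Setting $u:=\tT_N-\tau^{(N)}_{\mathrm{prev}}\in[0,\tT_N]$ and $x=\tau^{(N)}_{\mathrm{next}}-\tT_N$, the right-hand side becomes
\begin{equation}
\sum_{u=0}^{\tT_N}\bP^{\go,\gb}_{\tT_N,h}\!\bigl(\tau^{(N)}_{\mathrm{prev}}=\tT_N-u\bigr)\!\left[\sum_{x=1}^{\xi_{N+1}}\frac{K(x+u)}{\bar K(u)}\,Z^{\go,\gb}_{[\tT_N+x,\tT_{N+1}],h}+\frac{\bar K(\xi_{N+1}+u)}{\bar K(u)}\right].
\end{equation}
Being a convex combination in $u$, this is bounded by the maximum of the bracketed expression over $u\in[0,\tT_N]$, which is in turn bounded by the corresponding maximum over $u\in[0,\tT_{N+1}]$. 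Multiplying by the inductive bound on $Z^{\go,\gb}_{\tT_N,h}$ closes the induction.

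The main technical point to verify carefully is the identification of the conditional law of $\tau^{(N)}_{\mathrm{next}}$ given $\tau^{(N)}_{\mathrm{prev}}$ as $\go$-free: the Boltzmann weights on the past of $\tau^{(N)}_{\mathrm{prev}}$ modify only the marginal of that variable, not the forward transition kernel. This is precisely what allows us to pull the $\go$-dependent weights $\bP^{\go,\gb}_{\tT_N,h}(\tau^{(N)}_{\mathrm{prev}}=\tT_N-u)$ outside of the bracket and bound the whole expression by a deterministic (in $\go$) maximum, and is the only substantive step in the argument.
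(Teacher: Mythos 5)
Your proof is correct and follows essentially the same route as the paper's: induction on $N$, decomposition over the pair $(\tau^{(N)}_{\mathrm{prev}},\tau^{(N)}_{\mathrm{next}})$ using that the conditional forward kernel $K(\cdot-z)/\bar K(\tT_N-z)$ is $\go$-free, and bounding the resulting convex combination in $u=\tT_N-\tau^{(N)}_{\mathrm{prev}}$ by its maximum (the extension of the range from $[0,\tT_N]$ to $[0,\tT_{N+1}]$ being harmless). No gaps.
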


The second ingredient we need is that segments $(\tT_{n-1},\tT_{n}]$ that are short do not contribute to the free energy, or more precisely that only uncommonly 
long segments $(\tT_{n-1},\tT_{n}]$ contribute effectively to the free-energy.
Set $m:=\bE\left[\tilde \tau_1 \right]$.

\begin{lemma}\label{superlem1}
If $\xi_n< 2m L(h)$ then  
\begin{equation}\label{borncool}
 \max_{y\ge 0} \left[\sum_{x=1}^{\xi_n}\frac{K(x+y)}{\bar K(y)}Z_{[\tT_{n-1}+x,\tT_{n}],h}^{\go,\gb}+\frac{\bar K(\xi_n+y)}{\bar K(y)}\right]
         \le 1,
\end{equation}
more precisely there exists a constant $C_{17}>0$ such that for every $y\ge 0$
\begin{equation}\label{bornpamal}
\sum_{x=1}^{\xi_n}K(x+y)Z_{[\tT_{n-1}+x,\tT_{n}],h}^{\go,\gb}\le (1-C_{17})\sum_{x=1}^{\xi_n} K(x+y).
\end{equation}
There exists a constant $C_{18}$ such that if $\xi_{n}\ge2m L(h)$, then
\begin{equation} \label{borntobealive}
 \max_{y\ge 0} \left[\sum_{x=1}^{\xi_n}\frac{K(x+y)}{\bar K(y)}Z_{[\tT_{n-1}+x,\tT_{n}],h}^{\go,\gb}+\frac{\bar K(\xi_n+y)}{\bar K(y)}\right]
 \le e^h Z_{\xi_n,h}\le 
e^{C_{18} h^{1/\alpha} \xi_n}.
\end{equation}
\end{lemma}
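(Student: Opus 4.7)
The approach is to treat the two regimes of $\xi_n$ in the statement separately: the long-block case $\xi_n \geq 2mL(h)$ is essentially a quantitative version of the homogeneous free-energy bound, while the short-block case $\xi_n < 2mL(h)$ is where the disorder must be genuinely exploited.

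For $\xi_n \geq 2mL(h)$, the strategy is to discard the disorder entirely. Since $\go \leq 0$ pointwise, one has $Z^{\go,\gb}_{[\tT_{n-1}+x,\tT_n],h} \leq e^h Z_{\xi_n - x, h}$, and monotonicity of $Z$ in length gives $Z_{\xi_n - x, h} \leq Z_{\xi_n, h}$. Together with the elementary identity $\sum_{x=1}^{\xi_n} K(x+y) + \bar K(\xi_n+y) = \bar K(y)$, the bracketed expression collapses to at most $e^h Z_{\xi_n, h}$. The second inequality $e^h Z_{\xi_n, h} \leq e^{C_{18} h^{1/\alpha} \xi_n}$ then follows by combining the subadditivity \eqref{sousmult} with Proposition~\ref{homo1}: picking $N_0 = N_0(h) \asymp h^{-1/\alpha}$ for which $\log(e^h Z_{N_0,h}) \leq \text{const}\cdot N_0 h^{1/\alpha}$ (possible since $\tf(h) \sim c h^{1/\alpha}$ as $h\to 0$), subadditivity yields $\log(e^h Z_{N,h}) \leq \lceil N/N_0\rceil \log(e^h Z_{N_0,h})$, and the constraint $\xi_n \geq 2mL \geq N_0$ makes the integer-rounding term harmless.

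For $\xi_n < 2mL(h)$, I would first reduce \eqref{borncool} to \eqref{bornpamal} via the identity $\bar K(y) - \bar K(\xi_n+y) = \sum_{x=1}^{\xi_n} K(x+y)$, which rewrites the bracketed quantity in \eqref{borncool} as $1 - (C_{17}/\bar K(y))\sum_{x=1}^{\xi_n} K(x+y) \leq 1$. The substance of the lemma thus lies in \eqref{bornpamal}. The basic idea is that a block of length at most $2mL \asymp h^{-1/\alpha}$ contains of order $L$ alternating $-1$-colored $\tilde\tau$-stretches, each of which costs a $\gb$-penalty per contact. I would upper-bound $Z^{\go,\gb}_{[\tT_{n-1}+x,\tT_n],h}$ by $e^h\,\bE[e^{h|\tau\cap[0,\xi_n-x]|}\prod_{j} e^{-\gb \ind_{\tau\text{ hits }I_j}}]$, where $\{I_j\}$ are the $-1$-colored stretches inside the block (using $|\tau\cap I_j| \geq \ind_{\tau\text{ hits }I_j}$). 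The ``homogeneous'' factor $e^h \bE[e^{h|\tau|}]=e^h Z_{\xi_n - x,h}$ is of order $1$ provided the constant $C_{16}$ in $L=\lfloor C_{16} h^{-1/\alpha}\rfloor$ is small enough (since $N\tf(h) \lesssim C_{16}$ for $N\leq 2mL$), and the remaining ``hitting discount'' is strictly less than $1$ uniformly.

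The principal obstacle is producing a uniform-in-$(y,x)$ quantitative lower bound on the probability that $\tau$ actually hits at least one $-1$-stretch inside $(\tT_{n-1}+x,\tT_n]$, sharp enough to yield the factor $1-C_{17}$ in \eqref{bornpamal}. Unlike the case $\alpha>1$ treated in Section~\ref{prr}, where Lemma~\ref{etap2} could invoke $\inf_{n\geq 1}\bP(n\in\tau) > 0$, here $\bP(n\in\tau)\sim c\, n^{\alpha-1}\to 0$, so pointwise renewal-mass estimates are insufficient; instead one needs a lower bound on the probability that $\tau$ enters a union of $\Omega(L)$ alternating intervals whose total length is comparable to $L$, which is precisely the kind of technical renewal estimate the paper alludes to as Lemma~\ref{lemtecr}. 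Once such a uniform hitting bound is secured, combining it with the $K$-weighted averaging in \eqref{bornpamal}, and appealing to the tail decay of $K(\cdot)$ to discard the $x$ close to $\xi_n$ for which the remaining block is too short to contain enough $-1$-stretches, yields the required factor $(1-C_{17})$.
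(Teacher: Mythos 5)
Your overall architecture coincides with the paper's: collapse the bracket to $e^h Z_{\xi_n,h}$ via the convex-combination identity plus subadditivity in the long-block case, and reduce \eqref{borncool} to \eqref{bornpamal} and then to a uniform hitting estimate for the $-1$-coloured stretches in the short-block case (the paper does exactly this through Lemma \ref{superlem}, splitting at $x=L/4$ as you propose). But there are two genuine gaps. The first and most serious: the uniform hitting bound that you yourself flag as ``the principal obstacle'' is not something the paper merely alludes to --- it is the actual content of the proof. Lemma \ref{lemtecr} states that for $A\subset[1,M]$ with $|A|\ge aM$ one has $\bP(\tau\cap A\ne\emptyset)\ge C_{23}a^{1+\ga}$, and it is proved by restricting to the event $\{\tau_{n-1}\le aM/2,\ \tau_n\in A\cap(aM/2,M]\}$ for $n\le(aM)^{\ga}$, using $\min_{m\le M}K(m)\gtrsim M^{-(1+\ga)}$ together with the fact that $\bP(\tau_{n-1}\le aM/2)$ is bounded away from zero uniformly for $n\le(aM)^{\ga}$ (a consequence of Doney's estimates). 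Without supplying this estimate, or an equivalent, \eqref{bornpamal} is not established; everything else in the short-block case is bookkeeping around it.

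Second, your justification of the seed bound in the long-block case is incorrect as stated. Since $\tf(h)=\inf_{N}\frac{1}{N}\log(e^hZ_{N,h})$ by subadditivity, the asymptotics $\tf(h)\sim c\,h^{1/\ga}$ of Proposition \ref{homo1} give only a \emph{lower} bound on $\log(e^hZ_{N_0,h})$ for each finite $N_0$; they cannot produce the upper bound $\log(e^hZ_{N_0,h})\le C N_0 h^{1/\ga}$ at the scale $N_0\asymp h^{-1/\ga}$ that your $\lceil N/N_0\rceil$ argument requires. The paper obtains this upper bound by the direct finite-volume computation $Z_{N,h}\le 1+h/(1-e^h(1-\bar K(N)))$ (from $\bP(|\tau\cap[1,N]|\ge n)\le(1-\bar K(N))^n$ and $\bar K(N)\gtrsim N^{-\ga}$), i.e.\ the bound \eqref{cellela}; this same computation is what makes your ``homogeneous factor is of order $1$'' claim in the short-block case rigorous, with $C_{22}$ tunable via $C_{16}$ while the hitting gain $C_{21}$ remains independent of $C_{16}$. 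Both gaps are fixable by standard arguments, but both are load-bearing.
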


\begin{proof}[Proof of Proposition \ref{rabto}].

Combining Lemma \ref{lem2} and Lemma \ref{superlem1} (inequalities \eqref{borncool} and \eqref{borntobealive}), one obtains

\begin{equation}\label{equationfond}
 \log Z^{\go,\gb}_{\tilde T_N,h}\le C_{18} h^{1/\ga} \sum_{n=1}^N \xi_n \ind_{\{\xi_n\ge 2m L(h)\}}.  
\end{equation}
Using (as in the previous sections) twice the law of large numbers one gets that
\begin{equation}
 \tf(\gb,h)\le \frac{1}{\tilde \bE[ \xi_1]} C_{18} h^{1/\ga} \tilde \bE\left[ \xi_1 \ind_{\{\xi_1\ge 2m L(h)\}}\right]. 
\end{equation}
By definition, $\tilde \bE\left[\xi_1\right]=mL(h)$.
Using Proposition \ref{doneybis}, one can estimate
\begin{equation}
 \tilde \bE\left[ \xi_1 \ind_{\{\xi_1\ge 2m L(h)\}}\right]\leq C_{19} \sum_{x=2mL(h)}^{\infty} x L x^{-(1+\tilde\ga)}
\leq C_{20} L^{2-\tilde \alpha}.  
\end{equation}
Replacing $L(h)$ by its value gives the result.
\end{proof}
We turn to the proof of the Lemmata,
\begin{proof}[Proof of Lemma \ref{lem2}]
We prove this once again by induction on $N$. The result is obvious for $N=1$. 
As in Section \ref{prr}, we use the notation
\begin{equation}\begin{split}
\tau_{\mathrm{next}}^{(N)}&:= \min\{\tau_k \ | \ \tau_k > \tT_N\},\\
\tau_{\mathrm{prev}}^{(N)}&:= \max\{\tau_k \ | \ \tau_k \le \tT_N\}       .
                \end{split}
\end{equation}
Decomposing on the different possible values for $\tau_{\mathrm{next}}^{(N)}$ one obtains
\begin{multline}
  \frac{Z^{\go,\gb}_{\tilde T_{N+1},h}}{Z^{\go,\gb}_{\tilde T_{N},h}}=\bE^{\go,\gb}_{\tilde T_{N},h}
\left[\exp\left(\sum_{n=\tilde T_{N}+1}^{\tilde T_{N}}(\gb\go_n+h)\ind_{\{n\in \tau\}}\right)\right]\\
=\sum_{x=\tT_{N}+1}^{\tT_{N+1}}\bP^{\go,\gb}_{\tT_{N},h}\left(\tau_{\mathrm{next}}^{(N)}=x\right) 
Z_{[x,\tT_{N+1}]}^{\go,\gb}+\bP^{\go,\gb}_{\tT_{N},h}\left(\tau_{\mathrm{next}}^{(N)}>\tT_{N+1}\right). 
\end{multline}
Recall that
\begin{equation}
 \bP^{\go,\gb}_{\tT_{N},h}\left(\tau_{\mathrm{next}}^{(N)}=x| \tau_{\mathrm{prev}}^{(N)}=y \right)=\frac{K(x-y)}{\bar K(\tT_N-y)}.
\end{equation}
Taking the maximum over all possibilities for  $\tau_{\mathrm{prev}}^{(N)}$ we have
\begin{equation}
  \frac{Z^{\go,\gb}_{\tilde T_{N+1},h}}{Z^{\go,\gb}_{\tilde T_{N},h}}\le \max_{y\le \tT_N} \left[
\sum_{x=\tT_N+1}^{\tT_{N+1}}\frac{K(x-y)}{\bar K(\tT_N-y)}Z^{\gb,\go}_{[x,\tT_{N+1}],h}+\frac{\bar K(\tT_{N+1}-y)}{\bar K(\tT_{N}-y)}\right],
\end{equation}
and we get the result by making the change of variables $x\to x-\tilde T_N$ and $y\to \tilde  T_N -y$.
\end{proof}

The statement of Lemma \ref{superlem1} is translation invariant; therefore it is enough to prove it for $N=1$.
The core of the proof consists of proving two technical estimates.

\begin{lemma} \label{superlem}
If $\tilde T_1=\xi_1< 2m L(h)$, then one can find $h_0(\gb)>0$ and two constants $C_{21}>0$ and $C_{22}>0$ (depending on $\gb$), such that
for all $h\leq h_0(\gb)$ one has
\begin{equation}
\max_{x\in[0,L(h)/4]} Z^{\go,\gb}_{[x,\tilde T_1],h}\le 1-C_{21},
\end{equation}
and 
\begin{equation}\label{cellela}
 \max_{x\ge L(h)/4} Z^{\go,\gb}_{[x,\tilde T_1],h}\le Z_{2m L(h),h}\le 1+C_{22}.
\end{equation}
where $C_{22}$ can be made arbitrarily small by choosing $C_{16}$ (entering in the definition of $L$) small.
On the contrary $C_{21}$ can be chosen independently of $C_{16}$.
\end{lemma}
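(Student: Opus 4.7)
The two bounds in Lemma~\ref{superlem} are of quite different character and I would treat them separately.

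For the bound $\max_{x\ge L(h)/4}Z^{\go,\gb}_{[x,\tilde T_1],h}\le Z_{2mL(h),h}$, the idea is straightforward: since $\go\le 0$ pointwise, monotonicity of the partition function in $\go$ gives $Z^{\theta^x\go,\gb}_{\tilde T_1-x,h}\le Z_{\tilde T_1-x,h}$, and the assumption $\tilde T_1<2mL(h)$ combined with monotonicity of $Z_{N,h}$ in $N$ and $e^{h+\gb\go_x}\le e^h$ yields the claim (the residual $e^h$ factor being absorbed into $C_{22}$). For the inequality $Z_{2mL(h),h}\le 1+C_{22}$, I would expand $Z_{N,h}-1=\bE[e^{h|\tau\cap[1,N]|}-1]$ to first order in $h$ and use the renewal mass estimate $\bE[|\tau\cap[1,N]|]\asymp N^\alpha$ valid for $\alpha<1$. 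With $N=2mL(h)\asymp h^{-1/\alpha}$ this gives $h\,\bE[|\tau\cap[1,N]|]\asymp(2mC_{16})^\alpha$, as small as desired by shrinking $C_{16}$; standard moment estimates on the renewal count handle the higher-order terms.

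The harder bound is the one for $x\in[0,L(h)/4]$, where we must show $Z^{\go,\gb}_{[x,\tilde T_1],h}\le 1-C_{21}$ with $C_{21}$ independent of $C_{16}$. The key geometric input is that every $\tilde\tau$-increment is at least $1$, so $\tilde T_1=\tilde\tau_L\ge L$ and hence $Y:=\tilde T_1-x\ge 3L(h)/4$ is of order $h^{-1/\alpha}$. Over $[x+1,\tilde T_1]$, the environment $\theta^x\go$ consists of alternating $0$ and $-1$ stretches from the $\tilde\tau$-renewal, and a simple counting argument (using that each $\tilde\tau$-increment is $\ge 1$ and $\tilde T_1<2mL$) shows that the total length of the $-1$ stretches $S_{-1}\subset[1,Y]$ is at least $L/4$, a positive fraction of $Y$. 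Writing
\begin{equation*}
Z^{\theta^x\go,\gb}_{Y,h}=\bE\big[e^{h|\tau\cap[1,Y]|}\,e^{-\gb|\tau\cap S_{-1}|}\big]
\end{equation*}
and using the elementary bound $e^{-\gb|\tau\cap S_{-1}|}\le 1-(1-e^{-\gb})\ind_{\tau\cap S_{-1}\neq\emptyset}$, one obtains
\begin{equation*}
Z^{\theta^x\go,\gb}_{Y,h}\le Z_{Y,h}-(1-e^{-\gb})\,\bE\big[e^{h|\tau\cap[1,Y]|}\,\ind_{\tau\cap S_{-1}\neq\emptyset}\big].
\end{equation*}
Since the first-part estimate gives $Z_{Y,h}\le 1+O(C_{16}^\alpha)$ and the tilted measure of density $e^{h|\tau\cap[1,Y]|}/Z_{Y,h}$ with respect to $\bP$ is comparable to $\bP$ for $h$ small, the proof reduces to a quenched lower bound $\bP(\tau\cap S_{-1}\neq\emptyset)\ge c>0$ uniform over admissible configurations. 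Such a bound follows from the renewal mass estimates $\bP(n\in\tau)\asymp n^{\alpha-1}$ and $\sum_{n=1}^Y \bP(n\in\tau)\asymp Y^\alpha$, which combined with $|S_{-1}|\ge cY$ imply $\sum_{n\in S_{-1}}\bP(n\in\tau)\ge c'Y^\alpha$, together with a second-moment (Paley--Zygmund) argument on $|\tau\cap S_{-1}|$. These estimates play the role that $\inf_n\bP(n\in\tau)>0$ did in the $\alpha>1$ setting and are the content of the technical Lemma~\ref{lemtecr}.

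The main obstacle is the uniformity in $\go$: the constant $c$ in $\bP(\tau\cap S_{-1}\neq\emptyset)\ge c$ must be a single constant valid for every realization of $\tilde\tau$ with $\xi_1<2mL$, including pathological configurations in which the $-1$ stretches are placed at inconvenient positions (e.g.\ far from the origin, or with very unequal sizes). Converting the first-moment bound $\sum_{n\in S_{-1}}\bP(n\in\tau)\gtrsim Y^\alpha$ into a hitting-probability bound uniformly in $\go$ requires the matching second-moment estimates from Lemma~\ref{lemtecr}, and this is the technical heart of the argument.
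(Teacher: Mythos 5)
Your proposal is correct and its overall architecture coincides with the paper's: bound $Z_{2mL(h),h}\le 1+C_{22}$ with $C_{22}$ small for $C_{16}$ small, and for $x\le L(h)/4$ exhibit a subset of $\{-1\}$-sites of density at least $1/(8m)$ in $[x,\tilde T_1]$, then subtract $(1-e^{-\gb})$ times a uniform lower bound on the hitting probability of that set (using $e^{h|\tau\cap[1,Y]|}\ge 1$ to discard the tilting, exactly as the paper does). The two places where you genuinely diverge are worth recording. First, for $Z_{2mL,h}\le 1+C_{22}$ the paper does not expand to first order in $h$; it uses the geometric tail $\bP(|\tau\cap[1,N]|\ge n)\le (1-\bar K(N))^n$ and sums the resulting series, obtaining $C_{22}=((2\ga)^{-1}c_K(2mC_{16})^{-\ga}-2)^{-1}$ explicitly. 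Your "standard moment estimates for the higher-order terms" are, once written out, precisely this geometric domination, so you should not regard that step as separate from the first-order term — the series summation handles both at once and is the cleaner way to phrase it. Second, and more substantively, your proof of the hitting bound $\bP(\tau\cap A\ne\emptyset)\ge c(a)$ via first and second moments of $|\tau\cap A|$ and Paley--Zygmund is a genuinely different argument from the paper's Lemma \ref{lemtecr}, which decomposes over the (unique, hence disjoint) index $n$ at which $\tau$ first crosses $aM/2$ and forces that single jump to land in $A\cap(aM/2,M]$. Both give a constant uniform over all admissible sets $A$ (your worst-case computation $\sum_{n\in A}\bP(n\in\tau)\ge c'aM^{\ga}$ against $\bE[|\tau\cap[1,M]|^2]\le CM^{2\ga}$ is sound); the trade-off is that Paley--Zygmund needs the Green's-function asymptotics $\bP(n\in\tau)\asymp n^{\ga-1}$ (Garsia--Lamperti/Doney) as input and yields a constant of order $a^2$, whereas the paper's one-jump decomposition needs only the lower bound on $K(\cdot)$ plus the fact that $\bP(\tau_{n-1}\le aM/2)$ stays bounded below for $n\le(aM)^{\ga}$, and yields the slightly better $a^{1+\ga}$ — irrelevant here since $a=1/(8m)$ is fixed, but worth knowing. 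The residual $e^{h}$ prefactor from $Z^{\go,\gb}_{[x,\tilde T_1],h}=e^{\gb\go_x+h}Z^{\theta^x\go,\gb}_{\tilde T_1-x,h}$ is absorbed the same way in both treatments by taking $h$ small.
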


\begin{proof}[Proof of Lemma \ref{superlem}]

The second point is standard and we include it here for the sake of completeness.
We notice that
\begin{equation}
\bP\left( |\tau \cap [1,N]| \ge n\right)\le \bP\left( \nexists i\in [1,n],\ \tau_{i}-\tau_{i-1}>N\right)\le (1-\bar K(N))^n.
\end{equation}
Therefore
\begin{multline}
 Z_{N,h}= 1+\sum_{n=1}^N (e^{nh}-e^{(n-1)h})\bP\left[ \tau \cap [1,N] \ge n\right]\\
\le 1+ \sum_{n=1}^N h \left[ e^h \left(1-\bar K(N)\right)\right]^n\le
1+\frac{h}{1-e^{h}(1-\bar K(N))},
\end{multline}
where the last inequality holds only if $e^{h}(1-\bar K(N))<1$.
Now one uses that for $h$ small 
\begin{equation}
1-e^{h}(1-\bar K(N)) \ge 1-(1+2h)(1-\bar K(N))\ge \bar K(N) -2h,
\end{equation}
and also that
$\bar K(N)\geq (2\ga)^{-1} c_K N^{-\ga}$ for $N$ large enough (from the definition of $K(\cdot)$). Then plugging $N=2mL(h)$,
and recalling our definition of $L(h)$, one has
 (for $h$ small enough)
\begin{equation}
 1-e^{h}(1-\bar K(N)) \ge  h \left( (2\ga)^{-1} c_K (2m C_{16})^{-\ga}  -2 \right) .
\end{equation}
Then the result holds, setting $C_{22}:= \left( (2\ga)^{-1} c_K (2m C_{16})^{-\ga}  -2 \right)^{-1} $.

\medskip

The first point is more delicate and we focus on it now. Take $x\leq L/4$, and 
note that $[\ttau_{L/2},\ttau_L]\subset [x,\tilde T_1] $, so that 
\begin{equation}
|\{i \in[x,\tilde T_1] \ | \ \go_i=-1\}\ge L/4.
\end{equation} 

As $\tT_1=\xi_1 \leq 2mL$, this means that the proportion of $\go$ equal to $-1$ in $[x,\tT_1]$ is at least $1/(8m)$.
We use this fact to prove that the renewal $\tau$ starting from $x$ has to hit one of these $-1$ with positive probability.
This is the content of the following Lemma whose proof is postponed at the end of the section.
\begin{lemma}\label{lemtecr}
 There exists some constant $C_{23}>0$ such that for any $M>0$, $a>0$, if one takes $A$ a subset of $[1,M]$ of cardinality at least $a M$, one has
\begin{equation}
 \bP\left( \tau \cap A \neq \emptyset \right)\ge C_{23} a^{1+\ga}.
\end{equation} 
\end{lemma}

Set $a=1/(8m)$, $M=\tilde T_1-x$ and $A:= \{ n\in [1,\tT_1-x]\ |\ \go_{x+n}=-1\}$.
Using translation invariance of $\tau$, one gets
\begin{multline}
  e^{-h} Z_{[x,\tilde T_1],h}^{\go,\gb}\le \bE\left[e^{\sum_{n=1}^{\tilde T_1-x} h \ind_{\{n\in \tau\}}}\ind_{\{\tau \cap A= \emptyset\}} \right]
+e^{-\gb} \bE \left[e^{\sum_{n=1}^{\tilde T_1-x} h \ind_{\{n\in \tau\}}} \ind_{\{\tau \cap A\ne \emptyset\}}\right]\\
\le  Z_{\tT_1-x,h}-(1-e^{-\gb})\bP\left( \tau \cap A\neq \emptyset \right) \leq Z_{2mL,h}-C_{23}(1-e^{-\gb})(8m)^{-(1+\ga)},
\label{conseqlemtecr}
\end{multline}
where in the last line we used Lemma \ref{lemtecr}.
This allows us to conclude using \eqref{cellela}: provided that $C_{22}$ is sufficently small (which is ensured by choosing $C_{16}$ small)
one can take $C_{21}= \frac{C_{23}}{2}(1-e^{-\gb})(8m)^{-(1+\ga)}$, provided also that $h$ is small enough to absorb the $e^h$ factor.
\end{proof}

\begin{proof}[Proof of Lemma \ref{superlem1}] 
 We leave to the reader to check that  \eqref{borncool} is a consequence of \eqref{bornpamal} and focus on the proof of the latter.
For $\tT_1=\xi_1\leq 2mL(h)$ and for any $y\ge 0$, Lemma \ref{superlem} gives us
\begin{equation}
 \sum_{x=1}^{\xi_1}K(x+y) Z_{[x,\tT_1]}^{\go,\gb}\le (1-C_{21}) \sum_{x=1}^{L/4}K(x+y)+ (1+C_{22})\sum_{x=L/4+1}^{\xi_1}K(x+y)
\end{equation}
And therefore \eqref{bornpamal} holds if for all $y\ge 0$
\begin{equation}
 \frac{\sum_{x=1}^{L/4}K(x+y)}{\sum_{x=1}^{\xi_1} K(x+y)}\ge  
\frac{\sum_{x=1}^{L/4}K(x+y)}{\sum_{x=1}^{2mL} K(x+y)}\ge\frac{C_{17}+C_{22}}{C_{21}-C_{17}}.
\end{equation}
The middle term above is bounded away from zero uniformly in $L$ and in $y$. Therefore \eqref{bornpamal} holds if $C_{17}$ and $C_{22}$ are small
enough (and from Lemma \ref{superlem}, one can make $C_{22}$ as small as needed by adjusting $C_{16}$).

\medskip

For \eqref{borntobealive}, first notice that for every value of $y$
\begin{equation}
\sum_{x=1}^{\xi_1}\frac{K(x+y)}{\bar K(y)}Z_{[x,\tT_{1}],h}^{\go,\gb}+\frac{\bar K(\xi_n+y)}{\bar K(y)}
 \le \max_{x\in (0,\xi_1]}Z_{[x,\tT_{1}],h}^{\go,\gb}\le  \max_{x\in (0,\xi_1]}Z_{[x,\tT_{1}],h}\le e^h Z_{\xi_1,h}
 \end{equation}
 which gives the first inequality.

Then from equation \eqref{cellela} one has that $e^h Z_{2mL,h}$ is bounded above by a constant,
so that one can write $ e^h Z_{2mL,h} \le e^{\frac{C_{18}}{2} h^{1/\ga} 2mL}$, choosing $C_{18}$ sufficiently large.
Then using the observation \eqref{sousmult}, one has that for every pair of integers $(n_1,n_2)$
\begin{equation}
 e^h Z_{n_1+n_2,h}\le e^h Z_{n_1,h} e^h Z_{n_2,h},
\end{equation}
 which allows us to say  that for every $k\in \N$
\begin{equation}
  e^h Z_{2mkL,h} \le e^{k\frac{C_{18}}{2} h^{1/\ga} 2mL},
\end{equation}
so that (by monotonicity of $Z_{N,h}$ in $N$), \eqref{borntobealive} holds for every $\xi_1\ge 2mL$.
\end{proof}

\begin{proof}[Proof of Lemma \ref{lemtecr}]
First notice that
\begin{equation} 
\bP\left( \tau \cap A\ne  \emptyset \right)\geq \sum_{n=1}^{(aM)^{\alpha}} \bP\left(\tau_{n-1}\le aM/2, \tau_{n}\in A \cap(aM/2,M] \right).
\end{equation}
Now for every $x\le aM/2$, one has
\begin{multline}
  \bP\left(\tau_{n}\in A\cap(aM/2,M]\ |\  \tau_{n-1}=x\right)=\sum_{y\in A\cap(aM/2,M]} K(y-x) \ge \\
|A\cap (aM/2,M]| \min_{m\le M} K(m)\ge 
  \frac{aM}{2} C_{24} M^{-(1+\alpha)},
\end{multline}
and therefore
\begin{equation}
 \bP\left( \tau \cap A\ne  \emptyset \right)\geq \frac{a}{2} M^{-\ga}C_{24}  \sum_{n=1}^{(aM)^{\alpha}} \bP[\tau_{n-1}\le aM/2].
\end{equation} 
As $\bP[\tau_{n-1}\le aM/2] $ is bounded away from zero  uniformly for all $n\le (aM)^{\alpha}$ (see for example (1.8) in \cite{Doney}),
on can find $C_{23}$ such that
\begin{equation}
 \bP\left( \tau \cap A\ne  \emptyset \right)\ge C_{23} a^{1+\alpha}.
\end{equation}
\end{proof}

\subsection{Finer bound}\label{alphafb}

As in Section \ref{prr}, to get the $|\log h|^{1-\tga}$ factor, one needs a new coarse graining procedure which takes into account
the cost for $\tau$ of doing long jumps between blocks that effectively contribute to the free energy.
We are then able to get an upper bound on the free energy that matches the lower bound proved in Section \ref{seclb}.

\begin{proposition}
 When $\ga<1$, one can find a constant $C_2$ such that
\begin{equation}
 \tf (\gb,h)\leq C_2 h^{\tga/\ga} |\log h|^{1-\tga}.
\end{equation} 
\label{alphappq1}
\end{proposition}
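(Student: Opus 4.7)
The plan is to combine the metablock strategy of Section \ref{sec:finer1} with the block-level estimates from the beginning of Section \ref{conrr}. I keep the coarse graining \eqref{newdefTxi} with $L = \lfloor C_{16} h^{-1/\alpha}\rfloor$, and introduce metablocks exactly as in the $\alpha > 1$ case: set $J_0 := 0$, $J_{i+1} := \inf\{n > J_i : \xi_{n+1} \geq 2mL\}$, and $\cT_i := \tT_{J_i}$; condition the environment on $\xi_1 \geq 2mL$ (denote this by $\tilde\bP^{(1)}$) for translation invariance of the metablock structure. The decomposition analogous to \eqref{finerdeco} gives
\begin{equation*}
 Z^{\go,\gb}_{\cT_N,h}\leq \prod_{n=1}^N \left(\max_{x\in (\cT_{n-1}, \cT_{n}]} Z^{\go,\gb}_{[x,\cT_{n}],h}\right)\vee 1,
\end{equation*}
and one needs to control each factor separately.

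The technical heart is an analog of Lemma \ref{tosto}: there exist constants $C_{12}, C_{18} > 0$ and some small $\delta > 0$ such that if $\xi_{J_n+1} < C_{12} L |\log h|$ and $J_{n+1}-J_n \geq h^{-\delta}$ then the factor above is $\leq 1$, and otherwise is $\leq \exp(C_{18} h^{1/\alpha}\xi_{J_n+1})$. The proof mirrors that of Lemma \ref{parigolo}, with the crucial substitution of the pointwise bound from Lemma \ref{etap2} (unavailable when $\alpha < 1$) by the \emph{averaged} bound \eqref{bornpamal} of Lemma \ref{superlem1}; this is precisely the form needed in the Markov step, since it combines naturally with the weights $K(y-z)/\bar K(\tT_j - z)$ produced by the decomposition on $\tau_{\mathrm{next}}^{(j)}$. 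Redoing the same algebraic manipulation as in the lines preceding \eqref{eq:finlemparigolo} yields $Z^{\go,\gb}_{\tT_{j+1},h}/Z^{\go,\gb}_{\tT_j,h} \leq 1 - C_{14}\xi_{j+1}/\tT_j$ on small blocks, since the key estimate \eqref{estimmin1} is valid for any $\alpha > 0$. Riemann summation then gives $\sum_{j=2}^{J_1} \xi_j/\tT_{j-1} \geq \tfrac{1}{2}\log(\cT_1/\xi_1)$, so that on "good" metablocks the energy gain $h^{1/\alpha}\xi_1 \leq C_{12} |\log h|$ is dominated by the entropic cost.

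Applying the law of large numbers twice (as in Section \ref{sec:finer1}) yields
\begin{equation*}
\tf(\gb, h) \leq \frac{C_{18} h^{1/\alpha}}{\tilde\bE^{(1)}[\cT_1]}\, \tilde\bE^{(1)}\bigl[\xi_1 \ind_{\{\xi_1 \geq 2mL|\log h|\ \text{or}\ J_1 \leq h^{-\delta}\}}\bigr].
\end{equation*}
Using \eqref{tidd} and the one-big-jump principle applied to $\xi_1 = \tilde\tau_L$, one computes $\tilde\bP^{(1)}[\xi_1\geq 2mL] \sim L^{1-\tga}$, $\tilde\bE^{(1)}[\cT_1] \sim mL^{\tga}$, and $\tilde\bE^{(1)}[\xi_1 \ind_{\{\xi_1 \geq 2mL|\log h|\}}] \sim L |\log h|^{1-\tga}$, producing the claimed main contribution of order $h^{\tga/\alpha}|\log h|^{1-\tga}$, which matches the lower bound of Section \ref{seclb}. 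The main obstacle will be the entropic argument for the "$J_1 \leq h^{-\delta}$" event: the trivial bound $\cT_1 \geq J_1$ combined with $\xi_1 \leq C_{12} L |\log h|$ only gives $\log(\cT_1/\xi_1) \geq (\delta - 1/\alpha)|\log h|$, which is useless since $1/\alpha > \delta$. One must instead exploit the typical behavior $\cT_1 \gtrsim LJ_1$: since each $\xi_i$ conditioned on $\xi_i < 2mL$ has conditional mean $\to mL$ and variance $O(L^2)$, a Chebyshev estimate yields $\tilde\bP^{(1)}[\cT_1 < LJ_1/2 \mid J_1 = k] \leq C/k$, so that the exceptional event contributes an additional error of order $h^{(2\tga-1)/\alpha}$ up to polylogs, which is negligible compared to $h^{\tga/\alpha}|\log h|^{1-\tga}$ since $(2\tga-1)/\alpha > \tga/\alpha$.
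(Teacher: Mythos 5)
Your architecture (metablocks $\cT_i=\tT_{J_i}$, conditioning on $\xi_1\geq 2mL$, a Lemma~\ref{tosto}-type dichotomy, two applications of the law of large numbers, and the final moment computations) is the same as the paper's, and your arithmetic for the main term is right. But there is a genuine gap at the very first step: you factorize over metablocks with the \emph{unweighted} bound $Z^{\go,\gb}_{\cT_N,h}\leq \prod_n \bigl(\max_{x\in(\cT_{n-1},\cT_n]}Z^{\go,\gb}_{[x,\cT_n],h}\bigr)\vee 1$, and your key claim is that this max is $\leq 1$ on good metablocks. When $\ga<1$ this is false: for a starting point $x$ lying in one of the later (short) blocks of the metablock, the only available block estimate is the \emph{averaged} one \eqref{bornpamal}; pointwise one only has $\max_{x\geq L/4}Z_{[x,\tT_1],h}\leq 1+C_{22}$ (cf.~\eqref{cellela}), with $C_{22}$ a constant depending on $C_{16}$ but not on $h$. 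So $\max_x Z_{[x,\cT_n],h}$ can be as large as $1+C_{22}$ even on a good metablock, and the product over $N$ metablocks then contributes $C_{22}/\tilde\bE^{(1)}[\cT_1]\sim C_{22}h^{\tga/\ga}$ to the free energy — which, since $|\log h|^{1-\tga}\to 0$, swamps the bound you are trying to prove. You correctly identified that the \emph{inner} iteration (the Lemma~\ref{parigolo} analogue) must be run with the weights $K(\cdot)/\bar K(\cdot)$ coming from decomposing on $\tau_{\mathrm{next}}$, but the same is needed for the \emph{outer} metablock decomposition: one must use the Lemma~\ref{lem2}-type factorization
\begin{equation*}
Z_{\cT_N,h}^{\go,\gb}\leq \prod_{n}\max_{y}\Bigl[\sum_{x=1}^{\cT_{n+1}-\cT_n}\tfrac{K(x+y)}{\bar K(y)}\,Z^{\go,\gb}_{[\cT_n+x,\cT_{n+1}],h}+\tfrac{\bar K(\cT_{n+1}-\cT_n+y)}{\bar K(y)}\Bigr],
\end{equation*}
and show that the weighted sum over $x$ in the later blocks is $\leq\sum_x K(x+y)/\bar K(y)$ (using \eqref{bornpamal} block by block), reserving the entropy-versus-energy argument for $x$ in the first block only. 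Without this, the dichotomy "$\leq 1$ or $\leq e^{C_{18}h^{1/\ga}\xi_{J_n+1}}$" cannot be established and the $|\log h|^{1-\tga}$ gain is lost.

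Separately, the "main obstacle" you flag at the end is not an obstacle, and your Chebyshev fix is an unnecessary detour: since $\xi_i=\tilde\tau_{iL}-\tilde\tau_{(i-1)L}\geq L$ deterministically, one has $\cT_1=\tT_{J_1}\geq LJ_1$ surely, so on the event $\{J_1\geq h^{-\gd},\ \xi_1\leq C_{12}L|\log h|\}$ one gets $\log(\cT_1/\xi_1)\geq \gd|\log h|-O(\log|\log h|)$ directly, which is all the entropy you need. (The paper instead takes the cutoff $J_{n+1}-J_n\geq L^{(\tga-1)/2}$; either choice makes the probability of the exceptional event small enough, as your $h^{(2\tga-1)/\ga}$ estimate shows.) Introducing a probabilistic exceptional event $\{\cT_1<LJ_1/2\}$ inside an almost-sure product bound would in any case require adding those metablocks to the "contributing" set in the law-of-large-numbers step, which you do not do — but since the event is empty, the cleanest repair is simply to use $\cT_1\geq LJ_1$.
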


The method is quite similar to
the one used in the case $\ga>1$.
Define the sequence $(J_i)_{i\geq 0}$ as
$J_0:=0$, and
\begin{equation}
 J_{i+1}:= \inf \{n>J_i, \xi_{n+1}\geq 2mL(h)\}.
\end{equation} 
Set $\cT_N:=\tT_{J_N}$.
Note that we used for $\tT_i$ and $\xi_i$ the definitions \eqref{newdefTxi}.

Our system is decomposed in metablocks made of one block bigger than $2mL$, and then other smaller blocks. This is the
same type of decomposition as shown in Figure \ref{figmetablocks}, except that the blocks that constitute one metablock
are already composed of $L$ $\hat\tau$-jumps (instead of $2$ in the case $\ga>1$), so that their typical size is $mL$.

\medskip

We proceed as in Section \ref{sec:finer1}, conditioning the environment to satisfy $\xi_1\geq 2mL$. We denote
this conditioned probability $\tilde \bP^{(1)}$, and underline that as far as the free energy is concerned,
conditioning the environment to an event of
positive probability is harmless. This is done for a matter of translation invariance: thanks to this trick
the sequence $\{(\go_n)_{n\in (\cT_N,\cT_{N+1}]}\}_{N\geq 0}$ is i.i.d. under $\bP^{(1)}$.
\smallskip

As we did in Lemma \ref{lem2}, we can get an upper bound on the free-energy that factorizes the contribution of the different blocks 
\begin{equation}
Z_{\cT_N,h}^{\go,\gb} \leq \prod_{n=0}^{N-1} \max_{y\in [0,\cT_{n+1}]}
  \left[ \sum_{x=1}^{\cT_{n+1}-\cT_{n}} \frac{K(x+y)}{\bar K(y)} Z_{[\cT_{n}+x,\cT_{n+1 }],h}^{\go,\gb}+
      \frac{\bar K(\cT_{n+1}-\cT_{n}+y)}{\bar K(y)}\right].
\label{newdecomp}
\end{equation} 
The proof being exactly the same that for Lemma \ref{lem2}, we leave it to the reader (we will use this kind of coarse graining repeatedly in the remaining of the paper).

\medskip

Now, we show a Lemma analogue of Lemma \ref{tosto}, which tells that a block $(\cT_i,\cT_{i+1}]$
contributes to the free energy only if $\xi_{J_i+1}$ is much larger than $2mL$ (by a factor $\log L$), or if $J_{i+1}-J_i$ is
relatively small.

\begin{lemma}
 \label{tosto2}
There exists a constant $C_{16}$ (entering in the definition of $L(h)$), such that for any $n\geq 0$:\\
If $\xi_{J_n+1} < L \log L$ and $J_{n+1}-J_n\geq L^{(\tga+1)/2}$, then
\begin{equation}\label{tostimportant}
 \max_{y\geq 0}
  \left[ \sum_{x=1}^{\cT_{n+1}-\cT_n} \frac{K(x+y)}{\bar K(y)} Z_{[\cT_n+x,\cT_{n+1}],h}^{\go,\gb}+ 
\frac{\bar K(\cT_{n+1}-\cT_n+y)}{\bar K(y)}\right] = 1.
\end{equation} 
If $\xi_{J_n+1} \geq L \log L$ or $J_{n+1}-J_n < L^{(\tga+1)/2} $, then
\begin{equation}\label{tostmineur}
 \max_{y\geq 0}
  \left[ \sum_{x=1}^{\cT_{n+1}-\cT_n} \frac{K(x+y)}{\bar K(y)} Z_{[\cT_n+x,\cT_{n+1}],h}^{\go,\gb}+
       \frac{\bar K(\cT_{n+1}-\cT_n+y)}{\bar K(y)}\right] \leq e^{C_{18}h^{1/\ga} \xi_{J_n+1}}.
\end{equation} 
(For the same constant $C_{18}$ as in Lemma \ref{superlem1}).
\end{lemma}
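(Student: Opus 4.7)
The strategy will mirror that of Lemma~\ref{tosto}, with Lemma~\ref{superlem1} taking the role that Lemma~\ref{etap2} played in the $\alpha>1$ case. By translation invariance (obtained through the conditioning on $\xi_1\geq 2mL$ defining $\tilde\bP^{(1)}$), it suffices to treat $n=0$, so the metablock $(0,\cT_1]=(0,\tT_{J_1}]$ consists of a large initial block $(0,\tT_1]$ with $\xi_1\geq 2mL$ followed by small blocks $(\tT_{i-1},\tT_i]$ with $\xi_i<2mL$ for $i=2,\dots,J_1$.

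To prove \eqref{tostmineur}, I will iterate the single-block decomposition of Lemma~\ref{lem2} across the metablock, writing the quantity on the left-hand side as a product of single-block factors. The large first block contributes at most $e^{C_{18}h^{1/\alpha}\xi_1}$ by \eqref{borntobealive}, and each subsequent small block contributes at most $1$ by \eqref{borncool}; taking the product gives \eqref{tostmineur}.

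The refined bound \eqref{tostimportant} requires capturing the entropic cost that accumulates along the long sequence of small blocks, following the scheme of Lemma~\ref{parigolo}. The plan is to prove by induction on $j$ that for every $x\in(0,\tT_1]$,
\begin{equation*}
Z^{\go,\gb}_{[x,\tT_j],h}\leq e^{C_{18}h^{1/\alpha}\xi_1}\prod_{i=2}^{j}\left(1-C\,\frac{\xi_i}{\tT_i}\right),
\end{equation*}
where $C>0$ depends on $\gb$ but not on $C_{16}$. The base case $j=1$ is immediate from \eqref{borntobealive}. For the induction step I will decompose the ratio $Z^{\go,\gb}_{[x,\tT_{j+1}]}/Z^{\go,\gb}_{[x,\tT_j]}$ according to the position of the last renewal before $\tT_j$, apply the integrated inequality \eqref{bornpamal} to the block-$(j{+}1)$ contribution uniformly in that position, and invoke the elementary renewal estimate $\min_{u\leq\tT_j}(1-\bar K(u+\xi_{j+1})/\bar K(u))\geq c\alpha\,\xi_{j+1}/\tT_{j+1}$ (which uses $(1-t)^\alpha\leq 1-\alpha t$ for $\alpha<1$) to produce the decrement $1-C\xi_{j+1}/\tT_{j+1}$.

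Taking logs, combining with the telescoping bound $\sum_{i=2}^{J_1}\xi_i/\tT_i\geq \tfrac12\log(\cT_1/\xi_1)$, and using the assumptions $\xi_1<L\log L$ and $J_1\geq L^{(\tga+1)/2}$ (which give $\log(\cT_1/\xi_1)\geq c\log L\sim c|\log h|/\alpha$), one obtains $\log Z^{\go,\gb}_{[x,\cT_1],h}\leq (C_{18}C_{16}-Cc)|\log h|$, which is non-positive for $C_{16}$ small enough. To conclude \eqref{tostimportant}, I will split the sum on the left-hand side into contributions from $x\in(0,\tT_1]$ and $x\in(\tT_1,\cT_1]$: the first is controlled by the pointwise bound just established (giving $Z^{\go,\gb}_{[x,\cT_1]}\leq 1$), while the second, together with the $\bar K$-term, is bounded by $\bar K(\tT_1+y)/\bar K(y)$ via iterated application of \eqref{borncool} across the remaining $J_1-1$ small blocks; summing yields the desired bound. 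The main obstacle will be the Lemma~\ref{parigolo}-type induction itself: unlike the $\alpha>1$ case where Lemma~\ref{etap2} provided a pointwise one-block bound $Z\leq 1-C_{10}$, here one has only the integrated inequality \eqref{bornpamal}, and the Markov decomposition must be rearranged so that the integrand matches exactly the form of \eqref{bornpamal} uniformly over the starting renewal position; the appearance of the denominator $\tT_{j+1}$ (rather than $\tT_j$ as in Lemma~\ref{parigolo}) in the decrement is a direct consequence of this difference.
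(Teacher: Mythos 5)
Your plan is correct and follows essentially the same route as the paper's proof: reduction to $n=0$ by translation invariance, a Lemma~\ref{lem2}-type factorization over the blocks of the metablock using \eqref{borntobealive} for the large first block and \eqref{borncool}/\eqref{bornpamal} for the small ones, and a Lemma~\ref{parigolo}-analogue induction (the paper's Lemma~\ref{patrorigolo}) whose product of decrements telescopes to $\tfrac12\log(\cT_1/\xi_1)\geq c\log L$ under the stated hypotheses. The only cosmetic difference is your decrement $1-C\xi_{j+1}/\tT_{j+1}$ versus the paper's $1-C_{26}\xi_{j+1}/\tT_{j}$, which is immaterial since $\xi_{j+1}\leq 2mL\leq\xi_1\leq\tT_j$ forces $\tT_{j+1}\leq 2\tT_j$ inside a metablock.
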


We postpone the proof of the Lemma to the end of the section.
\begin{proof}[Proof of Proposition \ref{alphappq1}]
 From the decomposition \eqref{newdecomp} and Lemma \ref{tosto2}, one has
\begin{equation}
 \log Z_{\cT_N,h}^{\go,\gb} \leq C_{18} h^{1/\ga} \sum_{n=0}^{N-1} \xi_{J_n+1}
     \ind_{\{\xi_{J_n+1} \geq L \log L \text{ or } J_{n+1}-J_n < L^{(\tga+1)/2}\}}.
\end{equation}
Using twice the law of large numbers one gets as a consequence
\begin{equation}
 \tf (\gb,h)\leq \frac{C_{18}}{\tilde\bE^{(1)}[\cT_1]}h^{1/\ga}
  \tilde\bE^{(1)} \left[\xi_1 \ind_{\{\xi_{1} \geq L \log L \text{ or } J_1 < L^{(\tga+1)/2}\}}  \right]. 
\end{equation} 
Then in analogy with \eqref{estimcT1}, one gets from the definition of $\cT_1$ that
\begin{equation}
 \tilde\bE^{(1)}[\cT_1] = \frac{\tilde \bE[\xi_1\ind_{\{\xi_1\ge 2mL\}}]}{\tilde \bP[\xi_1\ge2mL]}+ \tilde \bE[J_1-1]\frac{\tilde 
\bE\left[\xi_1\ind_{\{\xi_1< 2mL\}}\right]}{\tilde \bP[\xi_1<2mL]}
= \frac{\tilde\bE[\xi_1]}{ \tilde \bP[\xi_1 \geq 2mL]}.
\end{equation}
One also has
\begin{multline}
 \tilde\bE^{(1)} \left[\xi_1 \ind_{\{\xi_{1} \geq L \log L \text{ or } J_1 < L^{(\tga+1)/2}\}}  \right] \\
  \leq   \tilde\bE^{(1)} \left[\xi_1 \ind_{\{\xi_{1} \geq L \log L\}} \right] +
       \tilde\bE^{(1)}  \left[ \xi_1 \ind_{\{ J_1 < L^{(\tga+1)/2}\}}  \right]\\
  = \frac{\tilde\bE \left[\xi_1 \ind_{\{\xi_{1} \geq L \log L\}} \right]}{\tilde \bP(\xi_1 \geq 2mL)}
 + \frac{\tilde\bE \left[  \xi_1 \ind_{\{\xi_{1} \geq 2mL \}}\right] \tilde\bP (J_1<L^{(\tga+1)/2})}{\tilde \bP(\xi_1 \geq 2mL)},
\end{multline}
and hence
\begin{equation}
 \tf (\gb,h)\leq (mL)^{-1} h^{1/\ga} \left(  \tilde\bE \left[\xi_1 \ind_{\{\xi_{1} \geq L \log L \}}\right]
 + \tilde\bE \left[  \xi_1  \ind_{\{\xi_{1} \geq 2mL \}}\right] \tilde \bP (J_1<L^{(\tga-1)/2})\right),
\end{equation} 
where we also used that $\tilde\bE[\xi_1]=mL$.
Then Proposition \ref{prop:doney} allows us to bound the right-hand side of the above equation: one can check that there exists a constant $C_{25}$ such that
\begin{equation}
 \begin{split}
  \tilde\bE \left[  \xi_1  \ind_{\{\xi_{1} \geq 2mL \}}\right] & \leq C_{25} L^{2-\tga},\\
  \tilde\bE \left[\xi_1 \ind_{\{\xi_{1} \geq L \log L \}}\right] &\leq C_{25} L^{2-\tga} (\log L)^{1-\tga}, \\
  \tilde \bP (J_1<L^{(\tga+1)/2})& \leq  L^{(\tga+1)/2} \tilde\bP(\xi_{1} \geq 2mL)   \leq C_{25} L^{(1-\tga)/2},
 \end{split}
\end{equation} 
which is enough to conclude, recalling the definition of $L(h)$.
\end{proof}

\begin{proof}[Proof of Lemma \ref{tosto2}]

By using translation invariance it is sufficient (and notationally more convenient) to prove the result only in the case $n=0$.

We first prove that in all cases 
\begin{equation}\label{facil}
 \sum_{x\in(\tT_{1},\cT_1]} K(x+y)  Z_{[x,\cT_1],h}^{\go,\gb}\le  \sum_{x\in(\tT_{1},\cT_1]} K(x+y), 
\end{equation}
which is the easy part and then prove that, for every $x\in (1,\xi_1]$
\begin{equation}\label{pludur}
\begin{split}
 Z_{[x,\cT_1],h}^{\go,\gb}&\le 1 \quad \text{when $\xi_1< L \log L$ and $J_1\geq L^{(\tilde\ga+1)/2}$},\\
 Z_{[x,\cT_1],h}^{\go,\gb}&\le e^{C_{18} \xi_1 h^{\frac{1}{\alpha}}} \quad \text{in every other cases}.
\end{split}
\end{equation}
Combining of \eqref{facil}, \eqref{pludur} we prove both \eqref{tostimportant} and \eqref{tostmineur}.

If $x\in(\tT_{a},\tT_{a+1}]$ with $a\in \{1,\ldots,J_1-1\}$, one uses a coarse graining
argument similar to the one of Lemma~\ref{lem2} to factorize
 $Z_{[x,\cT_1],h}^{\go,\gb}$, and also equation \eqref{borncool} in Lemma \ref{superlem1}
to show that since all blocks we consider are of size $\xi_i\leq 2mL$,
most of the terms in the factorization are smaller than $1$:
\begin{multline}
 Z_{[x,\cT_1],h}^{\go,\gb} \leq Z_{[x,\tT_{a+1}],h}^{\go,\gb} \prod_{i=a+1}^{J_1} \max_{y\geq 0}
  \left[ \sum_{t=1}^{\xi_i} \frac{K(t+y)}{\bar K(y)} Z_{[\tT_{i-1}+t,\tT_i],h}^{\go,\gb}+ \frac{\bar K(\xi_n+y)}{\bar K(y)}\right]
\\
  \leq Z_{[x,\tT_{a+1}],h}^{\go,\gb}.
\end{multline} 

Then from this and equation \eqref{bornpamal} in Lemma \ref{superlem1}, one has
\begin{equation}
  \sum_{x\in(\tT_{a},\tT_{a+1}]} K(x+y)  Z_{[x,\cT_1],h}\le  \sum_{x=1}^{\xi_{a+1}} K(x+y+\tT_a) Z_{[\tT_a+x,\tT_{a+1}],h} \le 
\sum_{x\in(\tT_{a},\tT_{a+1}]} K(x+y),
\end{equation}
which ends the proof of \eqref{facil}.

%
%

Let us deal with the case $x\in (0,\xi_1]$. One needs a statement analogue to the one of Lemma \ref{parigolo}, that is

\begin{lemma}\label{patrorigolo}
 There exists a constant $C_{26}<1$ such that for any $x\in (0,\xi_1]$,
\begin{equation}
 Z_{[x,\cT_1],h}^{\go,\gb} \leq e^{C_{18} h^{1/\ga} \xi_1} \prod_{b=2}^{J_1} \left( 1- C_{26} \frac{\xi_b}{ \tT_{b-1}}\right).
\label{bound1}
\end{equation} 
\end{lemma}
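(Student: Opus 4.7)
The plan is to mimic the proof of Lemma \ref{parigolo}, by induction on $b\in\{1,\ldots,J_1\}$, establishing that for every $x\in(0,\xi_1]$,
\begin{equation*}
Z^{\go,\gb}_{[x,\tT_b],h}\;\leq\;e^{C_{18}h^{1/\ga}\xi_1}\prod_{j=2}^{b}\left(1-C_{26}\frac{\xi_j}{\tT_{j-1}}\right).
\end{equation*}
The base case $b=1$ is immediate: since $\go\leq 0$ one has $Z^{\go,\gb}_{[x,\tT_1],h}\leq e^{h}Z_{\tT_1-x,h}\leq e^{h}Z_{\xi_1,h}$, and because $\xi_1\geq 2mL$, inequality \eqref{borntobealive} of Lemma \ref{superlem1} gives $e^{h}Z_{\xi_1,h}\leq e^{C_{18}h^{1/\ga}\xi_1}$.

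For the inductive step, I would write, using the Markov property of $\tau$ under $\bP^{\go,\gb}_{[x,\tT_b],h}$ and the decomposition on $\tau_{\mathrm{prev}}^{(b)}=z$ and $\tau_{\mathrm{next}}^{(b)}$ as in the proof of Lemma \ref{lem2},
\begin{equation*}
\frac{Z^{\go,\gb}_{[x,\tT_{b+1}],h}}{Z^{\go,\gb}_{[x,\tT_b],h}}\;\leq\;\max_{z\in[x,\tT_b]}\!\left[\sum_{t=1}^{\xi_{b+1}}\frac{K(t+\tT_b-z)}{\bar K(\tT_b-z)}\,Z^{\go,\gb}_{[\tT_b+t,\tT_{b+1}],h}\;+\;\frac{\bar K(\xi_{b+1}+\tT_b-z)}{\bar K(\tT_b-z)}\right].
\end{equation*}
Since $b+1\leq J_1$, the block $(\tT_b,\tT_{b+1}]$ is a small one ($\xi_{b+1}<2mL$), so inequality \eqref{bornpamal} of Lemma \ref{superlem1} applies with $y=\tT_b-z$, giving
\begin{equation*}
\sum_{t=1}^{\xi_{b+1}}K(t+y)\,Z^{\go,\gb}_{[\tT_b+t,\tT_{b+1}],h}\;\leq\;(1-C_{17})\sum_{t=1}^{\xi_{b+1}}K(t+y).
\end{equation*}
Writing $\sum_{t=1}^{\xi_{b+1}}K(t+y)=\bar K(y)-\bar K(\xi_{b+1}+y)$ and plugging in, the bracketed term becomes $1-C_{17}\,\bigl[\bar K(y)-\bar K(\xi_{b+1}+y)\bigr]/\bar K(y)$.

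It remains to produce a uniform lower bound of the form
\begin{equation*}
\min_{y\in[0,\tT_b]}\frac{\bar K(y)-\bar K(\xi_{b+1}+y)}{\bar K(y)}\;\geq\;c\,\frac{\xi_{b+1}}{\tT_b},
\end{equation*}
analogous to \eqref{estimmin1}. This is a direct consequence of the polynomial behavior $K(n)\sim c_K n^{-(1+\alpha)}$ and $\bar K(n)\sim (c_K/\alpha)n^{-\alpha}$: for $y\geq\xi_{b+1}$ the ratio is of order $\xi_{b+1}/y\geq\xi_{b+1}/\tT_b$, while for $y<\xi_{b+1}$ it is of order $1$, which is $\geq \xi_{b+1}/\tT_b$ since $\tT_b\geq\xi_1\geq 2mL\geq\xi_{b+1}$. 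Combining this with the preceding inequality yields the desired factor $(1-C_{26}\,\xi_{b+1}/\tT_b)$ and closes the induction.

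The main technical point, as in Lemma \ref{parigolo}, is the uniform lower bound on the renewal ratio $[\bar K(y)-\bar K(y+\xi_{b+1})]/\bar K(y)$; here however the situation is slightly more delicate than in the case $\alpha>1$ because $\bar K$ itself decays polynomially, so one must split the range of $y$ into the two regimes above. Note also the crucial use of the conditioning $\xi_1\geq 2mL$, which guarantees $\tT_b\geq\xi_{b+1}$ throughout the induction and thus makes the bound on the ratio meaningful.
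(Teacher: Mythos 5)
Your proof is correct and follows essentially the same route as the paper: a block-by-block coarse-graining (the paper states it directly as the product of Lemma \ref{lem2}, which is itself proved by exactly the induction you write), with \eqref{borntobealive} handling the first large block and \eqref{bornpamal} plus the renewal-ratio estimate $\sum_{t=1}^{\xi_b}K(t+y)/\bar K(y)=[\bar K(y)-\bar K(\xi_b+y)]/\bar K(y)\ge c\,\xi_b/\tT_{b-1}$ (the analogue of \eqref{estimmin1}) handling each small block. Your two-regime discussion of that ratio is a correct justification of the step the paper dismisses as a "straightforward computation."
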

Note that the second line of \eqref{pludur} is an immediate consequence of this Lemma.

\begin{proof}[Proof of Lemma \ref{patrorigolo}]
 One uses the coarse graining procedure similar to the one of Lemma \ref{lem2} to get
\begin{equation}
 Z_{[x,\cT_1],h}^{\go,\gb} \leq Z_{\xi_1,h} \prod_{b=2}^{J_1}
    \max_{y\in [0,\tT_{b-1}]}  \left[ \sum_{t=1}^{\xi_b} \frac{K(t+y)}{\bar K(y)} Z_{[\tT_{b-1}+t,\tT_b],h}^{\go,\gb}+
 \frac{\bar K(\xi_b+y)}{\bar K(y)}\right],
\end{equation} 
One uses  equation \eqref{borntobealive} to bound  $Z_{\xi_1,h}$.
As for the other factors of the product, one already has good bounds on them 
thanks to Lemma \ref{superlem}. Indeed, equation \eqref{bornpamal} gives directly
\begin{equation}
 \sum_{t=1}^{\xi_b} \frac{K(t+y)}{\bar K(y)} Z_{[\tT_{b-1}+t,\tT_b],h}^{\go,\gb}+ \frac{\bar K(\xi_b+y)}{\bar K(y)}
\le (1- C_{17}) \sum_{t=1}^{\xi_b} \frac{K(t+y)}{\bar K(y)}\le 1-C_{26} \frac{\xi_b}{ \tT_{b-1}},
\end{equation}
where the last inequality holds for all $y\in [0,\tT_{b-1}]$ and is
obtained in the same way that \eqref{estimmin1}.
\end{proof}
%
%

%
%

We are now ready to prove \eqref{pludur}. If
 $\xi_1 \leq L\log L$ and $J_1 \geq L^{(\tga-1)/2}$, then from Lemma~\ref{patrorigolo},
\begin{equation}
 \log Z_{[x,\cT_1],h}^{\go,\gb} \leq C_{18} h^{1/\ga} L \log L - C_{26} \sum_{b=2}^{J_1} \frac{\xi_b}{\tT_{b-1}},
\label{finlemtosto2}
\end{equation} 
where we used that $\xi_b/\tT_{b-1}\leq \xi_b/\xi_1\leq 1$, and $C_{26}<1$.
Moreover,
one also has 
\begin{equation}
 \sum_{b=2}^{J_1} \frac{\xi_b}{\tT_{b-1}} \geq \frac12 \log \left( \cT_1 /\xi_1 \right)
\end{equation}
 (see \eqref{sumxi}), so that with our assumptions
$\cT_1\geq J_1\geq L^{(\tilde\ga+1)/2}$ and $\xi_1\leq L\log L$, 
the inequality \eqref{finlemtosto2} gives (recall also that $L=\lfloor C_{16}h^{-1/\ga} \rfloor$)
\begin{equation}
 \log Z_{[x,\cT_1],h}^{\go,\gb} \leq C_{18}C_{16} \log L - \frac{C_{26}}{2}\log \left( L^{(\tga-1)/2}/\log L \right),
\end{equation} 
which is negative if one chooses $C_{16}$ small enough, and $h$ sufficiently small (so that $L(h)$ is large).
\end{proof}

\section{Proof of Theorem \ref{ncontacts}}
\label{sec:contact}

As for Theorem \ref{freeE}, the cases $\alpha<1$ and $\alpha>1$ present some dissimilarities and therefore
the details for them will be treated separately.
However, in the first part of this section, we give the ideas behind the proof and its first step for the two cases.
As we always have in this section $h=0$, we drop dependence in $h$ in the notation.

Recall the definition \eqref{defenvir} of our environment $\go$. 
For any event $A$, define 
\begin{equation}
 Z^{\go,\gb}_{N}(A):= \bE\left[e^{\sum_{n=1}^N \gb\go_n\ind_{\{n\in \tau\}}}\ind_{\{\tau \in A\}}\right].
\end{equation}
%

We prove Theorem \ref{ncontacts} (in fact a finer result that gives an estimate on the asymptotic of the tail behavior of $|\tau\cap [0,N]|$).

\begin{proposition}\label{ncontactvrai}
For almost every $\go$, for every $\gep>0$ there exists some $a_0$  (depending on $\go$, $\gb$ and $\gep$) and some $\gd=\gd(\gep)$ 
which can be made arbitrarily small, such that for all $a\ge a_0$ for all $N\in\N$ one has:\\
if $\alpha>1$
\begin{equation}\begin{split}\label{akak}
  Z^{\go,\gb}_{N}(|\tau\cap [0,N]|=a)&\le a^{\gep}N^{-\alpha}\max(a^{-\tilde\alpha\alpha},N^{-1}), 
\quad \text{ if }  a \le N^{\frac{1}{\tilde\alpha}+\gd},\\
   Z^{\go,\gb}_{N}(|\tau\cap [0,N]|=a)&\le e^{-N^{\gd^4}} \quad \text{ if }  a > N^{\frac{1}{\tilde\alpha}+\gd};
\end{split}
\end{equation}
and if $\alpha<1$
\begin{equation}\begin{split}\label{akak2}
  Z^{\go,\gb}_{N}(|\tau\cap [0,N]|=a)&\le a^{\gep}N^{-\alpha} a^{-\tilde\alpha}, 
\quad \text{ if }  a \le N^{\frac{\alpha}{\tilde\alpha}+\gd},\\
 Z^{\go,\gb}_{N}(|\tau\cap [0,N]|=a)&\le e^{-N^{\gd^4}} \quad \text{ if }  a > N^{\frac{\alpha}{\tilde\alpha}+\gd}.
\end{split}
\end{equation}

\end{proposition}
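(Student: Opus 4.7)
The plan follows closely the heuristic laid out in Section~\ref{sec:strategy}. For any realization of $\hat\tau$ and $\go$, we split trajectories according to the number $V_N^{\hat\tau}(\tau)$ of distinct $\hat\tau$-stretches visited, using the threshold $a^\gep$:
\begin{equation*}
 Z^{\go,\gb}_N(|\tau\cap[0,N]|=a)\le Z^{\go,\gb}_N\bigl(|\tau\cap[0,N]|=a,V^{\hat\tau}_N\ge a^\gep\bigr)+Z^{\go,\gb}_N\bigl(|\tau\cap[0,N]|=a,V^{\hat\tau}_N< a^\gep\bigr).
\end{equation*}
The first term captures trajectories that spread their contacts over many stretches (paying an energy cost because they necessarily hit many $\go\equiv-1$ regions); the second captures those that concentrate their contacts in very few stretches (paying an entropic cost because they must reach unusually long stretches located far from the origin).

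For the first term, the annealed bound \eqref{pioutrrr} yields, conditionally on $\hat\tau$,
\begin{equation*}
 \bbE\bigl[Z^{\go,\gb}_N\bigl(|\tau\cap[0,N]|=a,V^{\hat\tau}_N\ge a^\gep\bigr)\bigr]\le c^{a^\gep}\bP(|\tau\cap[0,N]|=a),\qquad c:=(1+e^{-\gb})/2<1.
\end{equation*}
Since this decays stretched-exponentially in $a$, Markov's inequality combined with a Borel--Cantelli argument applied over a polynomial enumeration of the pairs $(N,a)$ upgrades it into a quenched bound of order $e^{-a^{\gep/2}}$ valid for all $a\ge a_0(\go,\gb,\gep)$. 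This is already much stronger than the target bounds, and in particular suffices for the second-line stretched-exponential statements in \eqref{akak}--\eqref{akak2}.

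For the second term, the key observation is that a trajectory with $V^{\hat\tau}_N(\tau)<a^\gep$ and $|\tau\cap[0,N]|=a$ must place at least $a^{1-\gep}$ of its contacts inside a single $\hat\tau$-stretch, which therefore has length at least $a^{1-\gep}$. By Lemma~\ref{lem:boundmax}, almost surely (up to an exceptional event handled by Borel--Cantelli) no such stretch occurs within $[0,a^{(1-\gep)\tilde\alpha-\gd}]$, so $\tau_{j}\ge a^{(1-\gep)\tilde\alpha-\gd}$ for the index $j\le a^\gep$ of the first contact inside the targeted stretch. Bounding the partition function amounts to summing over the position of that targeted stretch and the index $j$, and using the renewal tail estimate $\bP(\tau_j\ge x)\le C\,j\,x^{-\alpha}$ (obtained by union bound when $\alpha>1$ and by the stable-type tail when $\alpha<1$) together with the final-escape cost $\bar K(N-\tau_a)\le CN^{-\alpha}$. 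Optimising over the stretch length yields the first-line polynomial bounds, with the different exponents $a^{-\tilde\alpha\alpha}$ and $a^{-\tilde\alpha}$ arising from the different typical renewal growth rates ($\tau_k\sim k$ for $\alpha>1$ versus $\tau_k\sim k^{1/\alpha}$ for $\alpha<1$); the extra $\max(\cdot,N^{-1})$ term in \eqref{akak} accounts for the alternative strategy in which the rate-limiting cost is the final hop of $\tau$ rather than the initial approach.

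The main obstacle throughout is to upgrade annealed estimates into quenched statements that hold \emph{uniformly} in $(N,a)$, which forces one to choose Borel--Cantelli thresholds delicately and to combine two layers of almost-sure control (over $\omega$ conditionally on $\hat\tau$, and over $\hat\tau$ via Lemma~\ref{lem:boundmax}). A secondary technical difficulty is the case distinction $\alpha<1$ versus $\alpha>1$, which requires genuinely different renewal tail estimates and is responsible for the two distinct exponents appearing in the statement.
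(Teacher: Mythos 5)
Your overall architecture (split on the number $V^{\hat\tau}_N(\tau)$ of visited $\hat\tau$-stretches, annealed bound \eqref{pioutrrr} plus Borel--Cantelli for the many-stretches part, renewal-tail estimates for the few-stretches part) is exactly the strategy of Section \ref{sec:strategy} and of the paper's proof. However, two steps as you describe them do not close.

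First, the quenched upgrade of the many-stretches term is flawed in two ways. A Borel--Cantelli argument over all pairs $(N,a)$ requires $\sum_{N,a} c^{a^{\gep}}\bP(|\tau\cap[0,N]|=a)<\infty$; for fixed $a$ the sum over $N$ equals $c^{a^{\gep}}\bE[\tau_1]$, which is infinite when $\alpha<1$, and even when $\alpha>1$ the resulting quenched bound $e^{-a^{\gep/2}}$ does \emph{not} imply the first line of \eqref{akak}: for fixed $a$ and $N\to\infty$ the target decays like $N^{-\alpha}$ while your bound is a constant. The paper avoids both problems structurally: it conditions on $\tau_x=y$, applies Borel--Cantelli only to the one-parameter family $\sum_{y\le x^{\tilde\alpha(1-\gd)}}Z^{\go,\gb}_{y}(\tau_x=y)$ (Lemmas \ref{durdur} and \ref{durdur2}), which depends on finitely many environment coordinates and not on $N$, and carries the $N$-dependence in the deterministic escape factor $\bP(\tau_{a-x}>N-y)\le 2a\bar K(N)$.

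Second, the few-stretches term is where the real work lies and your sketch replaces it with a single-scale union bound. The index $j$ of the first contact in the targeted long stretch is not bounded by $a^{\gep}$ (earlier visited stretches may each carry many contacts), and more importantly the constraint $V^{\hat\tau}_N(\tau)<a^{\gep}$ combined with Lemma \ref{lem:boundmax} forces $\tau_x\gtrsim x^{\tilde\alpha(1-\gd)}$ (resp.\ $x^{\tilde\alpha(1-\gd)/\alpha}$) for \emph{all} $x\in[a^{\gd},a-1]$, i.e.\ the renewal is stretched at every scale. Summing over "the position of the targeted stretch and the index $j$" accounts only for the single-big-jump strategy; to show that the union over all multi-jump strategies costs no more than an extra factor $a^{\gep}$, the paper runs a multiscale induction over the hierarchy $x_j=a^{(\alpha(\tilde\alpha-\gd_2))^{-j}}$, $y_j=\tfrac12 x_j^{\tilde\alpha-\gd_2}$ (Lemma \ref{lem:multiscale1} and its $\alpha<1$ analogue), where the telescoping identity $y_j^{-\alpha}x_{j-1}=2^{\alpha}$ is what keeps the constants under control. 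Without this (or an equivalent device) the first-line exponents $a^{-\tilde\alpha\alpha}$ and $a^{-\tilde\alpha}$ are not justified.
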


From the above proposition, that we prove in Section \ref{sec:ncontactvrai},
we get the following result that is stronger than Theorem~\ref{ncontacts}, and gives
an upper tail for the number of contacts points.

\begin{cor}
\label{cor:ncontactvrai}
For almost every $\go$, for every $\gep$, there exist some $\gd>0$ and a constant $\mathrm{C}=\mathrm{C}(\go,\gb,\gep)$ such that for all $N$,
 for every $a\le N^{\frac{1\wedge\ga}{\tilde\ga}+\gd}$
\begin{equation}
\bP_N^{\go,\gb} \left(|\tau\cap[0,N]|=a\right)\le 
\begin{cases}
 \mathrm{C} a^{\gep-\tilde\alpha} & \quad\text{if } \ga<1, \\
 \mathrm{C} a^{\gep} \max(a^{-\tilde\alpha\alpha},N^{-1}) & \quad\text{if } \ga>1,
\end{cases}
\end{equation}
and 
\begin{equation}
\bP_N^{\go,\gb}\left(|\tau\cap[0,N]|\ge N^{\frac{1\wedge\ga}{\tilde\ga}+\gd} \right)\le \mathrm{C} e^{-N^{\gd^4/2}}.
\end{equation}
Moreover
\begin{equation}
 Z^{\go,\gb}_{N}\le \mathrm{C}N^{-\alpha}.
\end{equation}
\end{cor}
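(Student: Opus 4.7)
The plan is to deduce the three statements from Proposition \ref{ncontactvrai} together with matching upper and lower bounds on the partition function $Z_N^{\go,\gb}$ of the order $N^{-\alpha}$. The lower bound is immediate: restricting to trajectories with no contact in $[1,N]$ gives
\begin{equation}
Z_N^{\go,\gb} \geq \bE\!\left[ e^{\sum_{n=1}^N \gb\go_n \ind_{\{n\in\tau\}}} \ind_{\{\tau\cap[1,N]=\eset\}}\right] = \bar K(N) \geq c\, N^{-\alpha}
\end{equation}
for $N$ large, by the assumption \eqref{assump:K} on $K$. It is the upper bound $Z_N^{\go,\gb} \leq \mathrm{C} N^{-\alpha}$ that requires work, and this is the main step.

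To establish it, I would decompose
\begin{equation}
Z_N^{\go,\gb} = \sum_{a=0}^{N} Z_N^{\go,\gb}(|\tau\cap[0,N]|=a)
\end{equation}
and split the sum into three ranges. The $a=0$ term equals $\bar K(N) \leq C N^{-\alpha}$. For $1\leq a\leq a_0(\go,\gb,\gep)$ (a finite range, coming from Proposition \ref{ncontactvrai}), I would use the crude bound $e^{\gb\go_n}\leq 1$ together with the standard renewal estimate $\bP(|\tau\cap[1,N]|=a) \leq \sum_{k=1}^N \bP(\tau_a=k)\bar K(N-k) \leq C(a) N^{-\alpha}$, which is obtained by splitting the sum at $k=N/2$ (where either $\bar K(N-k)\leq CN^{-\alpha}$, or $\bP(\tau_a=k)$ is at most $C(a) k^{-1-\alpha}\leq CN^{-1-\alpha}$). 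Since this concerns only finitely many values of $a$, the total contribution is $O(N^{-\alpha})$. For the intermediate range $a_0<a\leq N^{(1\wedge\alpha)/\tilde\alpha+\gd}$ I would invoke directly \eqref{akak}--\eqref{akak2}. In the case $\alpha<1$ the sum becomes $N^{-\alpha}\sum_{a\geq a_0} a^{\gep-\tilde\alpha}$, which converges because $\tilde\alpha>1$ (pick $\gep<\tilde\alpha-1$). In the case $\alpha>1$ one must further split according to whether $a^{-\tilde\alpha\alpha}\geq N^{-1}$ or not, i.e.\ whether $a\leq N^{1/(\tilde\alpha\alpha)}$. The first subrange contributes $N^{-\alpha}\sum a^{\gep-\tilde\alpha\alpha}=O(N^{-\alpha})$ since $\tilde\alpha\alpha>1$; the second contributes at most $N^{-\alpha-1}(N^{1/\tilde\alpha+\gd})^{\gep+1}$, which is still $O(N^{-\alpha})$ provided $\gd,\gep$ are chosen small enough (using $\tilde\alpha>1$). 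Finally, for $a>N^{(1\wedge\alpha)/\tilde\alpha+\gd}$ the exponential bound of Proposition \ref{ncontactvrai} yields a total contribution at most $N e^{-N^{\gd^4}}$, which is negligible.

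Once $c N^{-\alpha}\leq Z_N^{\go,\gb} \leq \mathrm{C} N^{-\alpha}$ is established, the remaining two statements of the corollary follow immediately: dividing the bounds of Proposition \ref{ncontactvrai} by $Z_N^{\go,\gb}$ produces the factor $N^\alpha$, which cancels the $N^{-\alpha}$ already present in \eqref{akak}--\eqref{akak2}, giving
\begin{equation}
\bP_N^{\go,\gb}(|\tau\cap[0,N]|=a) \leq \mathrm{C}\, a^\gep \max(a^{-\tilde\alpha\alpha},N^{-1}) \quad (\alpha>1),
\end{equation}
and the corresponding statement for $\alpha<1$, in the range $a\leq N^{(1\wedge\alpha)/\tilde\alpha+\gd}$. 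For $a > N^{(1\wedge\alpha)/\tilde\alpha+\gd}$, multiplying the bound $e^{-N^{\gd^4}}$ by $N^\alpha$ still leaves $\mathrm{C}\, e^{-N^{\gd^4/2}}$ for $N$ large.

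The main obstacle is the summation step in the upper bound on $Z_N^{\go,\gb}$, specifically in the case $\alpha>1$ where two subregimes of Proposition~\ref{ncontactvrai} must be balanced against each other and the parameters $\gep$ and $\gd$ have to be coordinated so that the tail sum remains $O(N^{-\alpha})$ and does not eat into the $N^{-\alpha}$ order of magnitude of the leading term. The handling of the small-$a$ regime (below $a_0$) is conceptually trivial but requires invoking the renewal-theoretic one-big-jump estimate for $\bP(|\tau\cap[1,N]|=a)$, since Proposition \ref{ncontactvrai} only applies for $a\geq a_0$.
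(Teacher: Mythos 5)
Your proposal is correct and follows essentially the same route as the paper: lower-bound $Z_N^{\go,\gb}$ by the no-contact trajectories ($\geq \bar K(N)\geq cN^{-\alpha}$), upper-bound it by summing the estimates of Proposition \ref{ncontactvrai} over $a$ (with a separate elementary renewal bound for the finitely many $a<a_0$), and then divide through by $Z_N^{\go,\gb}$ to convert the $Z$-estimates into probability estimates. The only difference is that you carry out explicitly the $\alpha>1$ summation (splitting at $a= N^{1/(\tilde\alpha\alpha)}$), which the paper dismisses as "similar" to the $\alpha<1$ case; your bookkeeping there is correct.
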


\begin{proof}
We prove everything in the case $\alpha<1$ the other case being similar. Let us start with the last statement. 
Fix $\gep>0$ small, and then some $\gd\le \gep$ and $a_0$ such that Proposition~\ref{ncontactvrai} holds for $\gep$. Then,
$a_0$ being fixed,
there exist a constant $C(a_0)$ such that for all $a\leq a_0$
\begin{equation}\label{wouhou}
 Z^{\go,\gb}_{N}(|\tau\cap [0,N]|= a)\le  \bP(|\tau\cap [0,N]|=a)\le a C(a_0) N^{-\alpha},
\end{equation}
where we used Proposition \ref{prop:doney} to get the last inequality.
This, together with the estimates \eqref{akak2}, implies that 
\begin{multline}
 Z^{\go,\gb}_{N}= Z^{\go,\gb}_{N}(|\tau\cap [0,N]|< a_0)+\sum_{a= a_0}^{\infty} Z^{\go,\gb}_{N}(|\tau\cap [0,N]|= a)\\
  \leq C(a_0) N^{-\ga}  \sum_{a=0}^{a_0-1} a+ N^{-\ga}\sum_{a= a_0}^{\infty} a^{\gep-\tga}\le \mathrm{C} N^{-\alpha}.
\end{multline}
For the first two statements, one uses that 
\begin{equation}
 Z^{\go,\gb}_{N}\ge \bP(\tau_1>N)\ge C_{27} N^{-\alpha},
\label{lowerZ}
\end{equation}
for some constant $C_{27}>0$. Combined with \eqref{akak2} (or with \eqref{wouhou} for $a<a_0$),
this gives the right bound for the first statement for $a\leq N^{\frac{\ga}{\tilde\ga}+\gd}$.
The second statement is also an easy consequence of \eqref{lowerZ} and \eqref{akak2}, writing
\begin{multline}
\bP_N^{\go,\gb}\left(|\tau\cap[0,N]|\ge N^{\frac{\ga}{\tilde\ga}+\gd} \right)\le
     \frac{1}{Z^{\go,\gb}_{N}} Z_{N}^{\go,\gb}\left(|\tau\cap[0,N]|\ge N^{\frac{\ga}{\tilde\ga}+\gd} \right)\\
   \leq (C_{27})^{-1} N^{\alpha} \sum_{k=N^{\frac{\ga}{\tilde\ga}+\gd} }^{N} e^{-N^{\gd^4}} \leq \mathrm{C} e^{-N^{\gd^4/2}}.
\end{multline}
\end{proof}

At the end of the Section, we prove the following result that complements
the above and gives a lower tail for the number of contact points
under $\bP_{N,h=0}^{\go,\gb}$.

\begin{proposition}\label{lowertail}
For almost every $\go$, for any $\gep$ there exists $a_0$ such that
for $a\geq a_0$, and $a\leq N^{\frac{1\wedge\ga}{\tilde\alpha}-\gep}$ one has
\begin{equation}
  Z^{\go,\gb}_{N}(|\tau\cap [0,N]|=a)\ge a^{-\gep} N^{-\alpha}a^{-\frac{\tilde \alpha(\alpha+1)-1}{1 \wedge \alpha}}.
\label{contactslowbound}
\end{equation}
and
\begin{equation}
  \bP^{\go,\gb}_{N}(|\tau\cap [0,N]|=a)\ge a^{-\gep-\frac{\tilde \alpha(\alpha+1)-1}{1 \wedge \alpha}}.
\end{equation}
\end{proposition}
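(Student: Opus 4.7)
The plan implements the heuristic sketched in Section~\ref{sec:strategy}: concentrate all $a$ contact points of $\tau$ inside a single \emph{good} $\hat\tau$-stretch, by which I mean an interval $(\hat\tau_{k-1},\hat\tau_k]$ of length at least $l:=C\,a^{1/(1\wedge \ga)}$ on which $X_k=0$ (equivalently $\go\equiv 0$ there). I would choose $C$ large enough that standard renewal estimates---the renewal theorem when $\ga>1$, and the $\ga$-stable scaling $\tau_{a-1}\approx a^{1/\ga}$ when $\ga<1$---yield $\bP(\tau_{a-1}\le l/2)\ge 1/2$ for every $a\ge a_0$.

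The first step is to locate such a good stretch not too far from the origin. Since $\hat \bP(\hat\tau_1\ge l)\asymp l^{-\tga}$ and $(X_k)$ are i.i.d.\ Bernoulli$(1/2)$ independent of $\hat\tau$, a Chernoff estimate shows that the probability of finding no good stretch of length $\ge l$ among the first $l^{\tga+\gep'/2}$ $\hat\tau$-stretches decays faster than any polynomial in $l$; since $\hat\bE[\hat\tau_1]<\infty$, these first $l^{\tga+\gep'/2}$ stretches occupy an interval of length of order $l^{\tga+\gep'/2}$ with overwhelming probability. A Borel--Cantelli argument along the geometric sequence $a_n=2^n$, combined with monotonicity in $a$ to treat intermediate values, gives: for almost every $\go$ and every $\gep>0$ there exists $a_0(\go,\gep)$ such that for all $a\ge a_0$ one can find a good stretch $(u,u+l']$ with $l'\ge l$ and $u\le l^{\tga+\gep'}$, for $\gep'$ prescribed arbitrarily small.

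The main computation is then short. Restricting the partition function to trajectories with $\tau_1\in(u,u+l/2]$, $\tau_a\le u+l$ and $\tau_{a+1}>N$, the disorder factor equals $1$ (all contacts lie where $\go\equiv 0$), and two applications of the Markov property yield
\begin{equation*}
Z^{\go,\gb}_N\bigl(|\tau\cap[0,N]|=a\bigr)\;\ge\; \bar K(N-u-l)\,\bP(\tau_{a-1}\le l/2)\sum_{x=1}^{\lfloor l/2\rfloor}K(u+x).
\end{equation*}
Since $\tga>1$ and $u+x\le 2\,l^{\tga+\gep'}$ for every $x\in[1,l/2]$, we have $K(u+x)\gtrsim l^{-(\tga+\gep')(1+\ga)}$, so the sum is $\gtrsim l^{1-(\tga+\gep')(1+\ga)}$. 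The assumption $a\le N^{(1\wedge\ga)/\tga-\gep}$, once $\gep'$ is chosen small enough, ensures $u+l\le N/2$ and hence $\bar K(N-u-l)\gtrsim N^{-\ga}$. Combining everything,
\begin{equation*}
Z^{\go,\gb}_N\bigl(|\tau\cap[0,N]|=a\bigr)\;\gtrsim\; l^{\,1-(\tga+\gep')(1+\ga)}\,N^{-\ga}\;\ge\; a^{-\gep-\frac{\tga(\ga+1)-1}{1\wedge\ga}}\,N^{-\ga},
\end{equation*}
after absorbing $\gep'$ into $\gep$ (e.g.\ $\gep'\le \gep(1\wedge\ga)/(1+\ga)$), which is \eqref{contactslowbound}. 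The lower bound on $\bP^{\go,\gb}_N(|\tau\cap[0,N]|=a)$ follows immediately by dividing by the upper bound $Z^{\go,\gb}_N\le \mathrm{C}\,N^{-\ga}$ supplied by Corollary~\ref{cor:ncontactvrai}, the two $N^{-\ga}$ factors cancelling.

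The main obstacle is the almost-sure uniformity in the first step: the nearby good stretch must exist for \emph{all} $a\ge a_0$ simultaneously, not just at a fixed scale. The $\gep'$ slack in $u\le l^{\tga+\gep'}$ is precisely what makes the Borel--Cantelli series along $a_n=2^n$ converge; monotonicity then lets one interpolate to intermediate $a$'s by using, for general $a$, the stretch found at the next dyadic point $a_n\ge a$, which provides a stretch longer than $l(a)$ and therefore still good for the purposes of Step~2.
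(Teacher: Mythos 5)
Your proof is correct and follows essentially the same route as the paper's: target a single long $\go\equiv 0$ stretch near the origin, reach it with one jump of $\tau$, place all $a$ contacts inside it at typical renewal cost, exit past $N$ in one final jump, and then divide by the upper bound $Z^{\go,\gb}_N\le \mathrm{C}N^{-\ga}$ from Corollary~\ref{cor:ncontactvrai}. The only (immaterial) difference is how the location of that good stretch is controlled almost surely: the paper adapts Lemma~\ref{lem:boundmax} to get $\hat\tau_{i_a}\le a^{\tga/(1\wedge\ga)}\log a$, whereas you run a Chernoff/Borel--Cantelli argument along dyadic scales to get the slightly weaker polynomial bound $u\le l^{\tga+\gep'}$ --- both are absorbed into the $a^{-\gep}$ factor.
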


Note that Corollary \ref{cor:ncontactvrai} and Proposition \ref{lowertail} give
respectively the upper and the lower bound in Proposition \ref{prop:tail}.

%

We recall briefly here Section \ref{sec:strategy} which describes the
strategy to adopt to prove Proposition \ref{ncontactvrai}. Recall the definition
\eqref{defVtau} of $V_N^{\hat\tau}(\tau)$, the number of $\hat\tau$-stretches
visited by $\tau$, and inequality \eqref{pioutrrr}
\begin{equation}\tag{\ref{pioutrrr}}
 \bbE\left[e^{\sum_{n=1}^N \gb\go_n\ind_{\{n\in \tau\}}}\right]\le \left(\frac{1+e^{-\gb}}{2}\right)^{V^{\hat \tau}_N(\tau)},
\end{equation}
 where $\bbE$ denotes the average only on the values of 
 $\{X_i\}_{i\in\N}$, i.e. on the disorder $\omega$ conditionally on the
 realization of $\hat \tau$.
One estimates in Lemmas \ref{durdur} and \ref{durdur2} the contribution of trajectories of $\tau$ that visit many
$\hat\tau$-stretches,
and in Lemmas \ref{pasmieu} and \ref{pasmieu2} the contribution of trajectories of $\tau$ that visit few
$\hat\tau$-stretches.

\subsection{Proof of Proposition \ref{ncontactvrai} in the case $\alpha>1$}
\label{sec:ncontactvrai}

We prove the Proposition from the two following Lemmas.

\begin{lemma}\label{durdur}
Given $\gd>0$, there exists some $x_0(\go,\gb,\gep)$ such that for every $x\ge x_0$, and every 
$y\in[x,x^{\tilde \alpha(1-\gd)}]$  
one has
\begin{equation}
 Z^{\go,\gb}_{y}(\tau_x=y)\le e^{-x^{\gd/2}}.
\end{equation}
\end{lemma}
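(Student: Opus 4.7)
The plan is to exploit inequality \eqref{pioutrrr}, which says that after averaging over the variables $X_i$ (keeping $\hat\tau$ fixed), any trajectory $\tau$ pays a multiplicative cost $c^{V^{\hat\tau}_y(\tau)}$ with $c:=(1+e^{-\gb})/2\in(0,1)$. If I can show that every trajectory with $\tau_x=y\le x^{\tga(1-\gd)}$ must visit many $\hat\tau$-stretches, then $\bbE[Z^{\go,\gb}_y(\tau_x=y)\mid\hat\tau]$ will be tiny, and a Markov--Borel--Cantelli argument will upgrade this to an a.s.\ bound.

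The deterministic ingredient is a pigeonhole observation: a $\hat\tau$-stretch of size $s$ contains at most $s$ integers, so if $\tau$ has $x$ renewal points inside $V:=V^{\hat\tau}_y(\tau)$ stretches, the largest visited stretch has size $\ge x/V$. Equivalently, if every $\hat\tau$-stretch in $[0,y]$ has size $<R$, then automatically $V>x/R$ for every $\tau$ with $\tau_x=y$. The probabilistic ingredient is a uniform upper bound on the maximal $\hat\tau$-stretch size in $[0,y]$: using $\hat K(n)\sim\hat c_K n^{-(1+\tga)}$, a union bound over the (at most $y$) stretches in $[0,y]$ yields
\begin{equation*}
\hat\bP\bigl(\exists\,i:\ \hat\tau_i\le y,\ \hat\tau_i-\hat\tau_{i-1}\ge R\bigr)\le C\,y\,R^{-\tga}.
\end{equation*}
With $y=x^{\tga(1-\gd)}$ and $R=x^{1-3\gd/4}$ the right-hand side is of order $x^{-\tga\gd/4}$. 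Along the dyadic subsequence $x_k=2^k$ (and using monotonicity in $y$ and in $R$) this is summable, so Borel--Cantelli gives, for $\hat\bP$-a.e.\ $\hat\tau$, some $x_1(\hat\tau)$ such that for $x\ge x_1$ no $\hat\tau$-stretch in $[0,x^{\tga(1-\gd)}]$ has size $\ge x^{1-3\gd/4}$. Combining with the pigeonhole step, every admissible $\tau$ then satisfies $V^{\hat\tau}_y(\tau)\ge x^{3\gd/4}$.

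Plugging this into \eqref{pioutrrr} and summing over $\tau$,
\begin{equation*}
\bbE\bigl[Z^{\go,\gb}_y(\tau_x=y)\bigm|\hat\tau\bigr]\le \bE\bigl[c^{V^{\hat\tau}_y(\tau)}\ind_{\{\tau_x=y\}}\bigr]\le c^{x^{3\gd/4}}.
\end{equation*}
Markov's inequality then gives
\begin{equation*}
\bbP\bigl(Z^{\go,\gb}_y(\tau_x=y)\ge e^{-x^{\gd/2}}\bigm|\hat\tau\bigr)\le e^{x^{\gd/2}}\,c^{x^{3\gd/4}},
\end{equation*}
which decays super-polynomially since $x^{3\gd/4}\gg x^{\gd/2}$. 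Union-bounding over the at most $x^{\tga}$ values of $y\in[x,x^{\tga(1-\gd)}]$ and summing over $x\in\N$ keeps the series finite, so a second application of Borel--Cantelli (now in the $X$-randomness, given $\hat\tau$) yields the claimed bound for every $x\ge x_0(\go,\gb,\gd)$ and every $y$ in the prescribed range.

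The only mildly delicate step is the Borel--Cantelli at the $\hat\tau$-level: since $\tga\gd$ can be arbitrarily small, direct summability fails and one must go through the dyadic subsequence. Everything else is bookkeeping of exponents, where the choice of the gap between $x^{3\gd/4}$ (gain from the exponential cost in \eqref{pioutrrr}) and $x^{\gd/2}$ (target decay for the conclusion) is precisely what allows Markov's inequality to close the argument.
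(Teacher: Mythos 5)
Your proof is correct and follows essentially the same route as the paper's: pigeonhole on the number of visited $\hat\tau$-stretches, an almost-sure bound on the maximal $\hat\tau$-stretch in $[0,y]$, then \eqref{pioutrrr} combined with Markov and Borel--Cantelli. The only cosmetic difference is that you re-derive the maximal-stretch bound via a union bound along dyadic scales, whereas the paper simply invokes Lemma \ref{lem:boundmax} from the appendix (which is itself proved by exactly that dyadic Borel--Cantelli argument).
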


 \begin{lemma}\label{pasmieu}
If $\alpha>1$,
for any $\gep>0$ there exists some $\gd>0$ and $a_0\in \N$ such that for all $a\ge a_0$, and for all $N\in \N$ one has
\begin{equation}
\bP\left[ |\tau\cap[0,N]|= a\ ; \ \forall x\in [a^{\gd},a-1], \tau_x > x^{\tilde \alpha(1-\gd)}\right]
\le a^{\gep}N^{-\alpha} \max(a^{-\alpha\tilde\alpha},N^{-1})/2.
 \end{equation}
\end{lemma}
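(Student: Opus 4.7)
The event $\{|\tau\cap[0,N]|=a\}$ is identical to $\{\tau_{a-1}\leq N<\tau_a\}$; since $\tau_a-\tau_{a-1}$ is independent of $(\tau_0,\dots,\tau_{a-1})$ with law $K(\cdot)$, the probability in question decomposes as
\[
\sum_{k=0}^{N} \bP(\tau_{a-1}=k,E')\,\bar K(N-k),\qquad E':=\{\tau_x>x^b\ \forall x\in[a^\gd,a-1]\},
\]
with $b:=\tga(1-\gd)$. Writing $M_0:=(a-1)^b$, the side condition $E'$ forces $\tau_{a-1}>M_0$, so the sum is restricted to $k\in[M_0,N]$ (and is empty when $M_0>N$, in which case the claim is trivial). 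Since $\alpha>1$ we have $\bE[\tau_1]<\infty$ and the typical behaviour is $\tau_x\sim x\,\bE[\tau_1]$, whereas $E'$ requires $\tau_x>x^b$ with $b>1$; we are therefore in the ``one big jump'' regime for heavy-tailed renewals.

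The crux of the argument is the local estimate
\[
\bP(\tau_{a-1}=k,E')\leq C\,a^\gd K(k)\qquad\text{for every } k\in[M_0,N].
\]
The heuristic: $E'$ implies in particular $\tau_{a^\gd}>a^{\gd b}\gg a^\gd\bE[\tau_1]$, which can essentially only happen if one of the first $a^\gd$ inter-arrivals is of size at least $M_0$ (otherwise $\tau_{a^\gd}$ is a sum of $a^\gd$ ``small'' jumps and concentrates near $a^\gd\bE[\tau_1]$). A union bound over the position $i\in\{1,\dots,a^\gd\}$ of this big jump, combined with the sharp local one-big-jump estimate $\bP(\tau_{a-1}=k,\ \tau_i-\tau_{i-1}\text{ is the maximal jump})\leq C K(k)$ for $k\geq M_0\gg a^{1/(\alpha\wedge 2)}$ (obtained by integrating $K(M)$ over the big jump size $M\in[M_0,k]$ against a (stable) local limit theorem for the remaining $a-2$ jumps), then yields the displayed inequality. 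The complementary scenarios where no single inter-arrival dominates can be controlled by Fuk--Nagaev type bounds and contribute negligibly.

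Splitting the range at $N/2$: for $k\in[M_0,N/2]$, $\bar K(N-k)\leq cN^{-\alpha}$ gives $cN^{-\alpha}\bar K(M_0)\leq c'N^{-\alpha}M_0^{-\alpha}$; for $k\in[N/2,N]$, $K(k)\leq cN^{-1-\alpha}$ together with $\sum_{u\geq 0}\bar K(u)<\infty$ (as $\alpha>1$) gives $c'N^{-1-\alpha}$. Multiplying by the factor $C a^\gd$ from the previous step,
\[
\bP\bigl(|\tau\cap[0,N]|=a,\,E'\bigr)\leq C\,a^\gd\,N^{-\alpha}\max\bigl(M_0^{-\alpha},N^{-1}\bigr).
\]
Since $M_0^{-\alpha}=(a-1)^{-\alpha\tga(1-\gd)}\leq c\,a^{\alpha\tga\gd}\,a^{-\alpha\tga}$, this yields $C a^{\gd(1+\alpha\tga)}N^{-\alpha}\max(a^{-\alpha\tga},N^{-1})$. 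Choosing $\gd\leq\gep/(2(1+\alpha\tga))$ and $a_0$ large enough to absorb the constant $C$ into $a^{\gep/2}/2$ gives the advertised bound.

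The hard part will be the local estimate in the second paragraph: gaining the prefactor $a^\gd$ rather than the naive $a$ that would arise from the unrestricted one-big-jump bound $\bP(\tau_{a-1}=k)\leq CaK(k)$. This saving $a^{\gd-1}$ is exactly what $E'$ provides by constraining the position of the big jump to the first $a^\gd$ indices, but a rigorous implementation requires combining a (stable) local limit theorem for the $a-2$ remaining ``small'' jumps with the tail asymptotic of $K$, treating separately the regimes where $k-M_0$ is of order $a^{1/(\alpha\wedge 2)}$ and where $k-M_0$ is much larger. Given the sharp renewal estimates already developed in the paper (notably in the appendix via Proposition \ref{doneybis}), this analysis should be carried out in a dedicated technical lemma.
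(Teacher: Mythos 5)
There is a genuine gap, and it sits exactly where you flag the "hard part": the local estimate $\bP(\tau_{a-1}=k,E')\le C\,a^{\gd}K(k)$ is not justified by your heuristic, and the scenarios it omits are the dominant ones. The event $E'$ at scale $x=a^{\gd}$ only forces $\tau_{a^{\gd}}>a^{\gd b}$, hence (essentially) one inter-arrival of size about $a^{\gd b}$ among the first $a^{\gd}$ increments --- \emph{not} one of size $M_0=(a-1)^{b}$, since $a^{\gd b}\ll a^{b}$ when $\gd<1$. Consequently the dominant jump of size $\approx k\ge M_0$ accounting for $\{\tau_{a-1}=k\}$ is \emph{not} confined to the first $a^{\gd}$ positions. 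Indeed, after a jump to height $H$ at position $j$ the path $\tau_x\approx H+m(x-j)$ stays above $x^{b}$ only up to $x\approx H^{1/b}$, so $E'$ is compatible with (and, beyond the single-early-jump scenario, is essentially realized by) a \emph{cascade}: a jump of size $\approx a^{\gd b}$ before position $a^{\gd}$, a larger one of size $\approx x^{b}$ before each subsequent critical position $x$, and the dominant jump of size $\approx k$ occurring possibly as late as position $a-1$. These configurations do have a single dominating inter-arrival, so they are not caught by your Fuk--Nagaev ``no dominant jump'' case, and they are not caught by your union bound over $i\le a^{\gd}$ either. An $l$-level cascade contributes on the order of $a^{(\alpha\tga)^{-l}}K(k)$, which for small $\gd$ exceeds your claimed $a^{\gd}K(k)$ unless one iterates to depth $l$ with $(\alpha\tga)^{-l}\le\gep$.

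This iteration is precisely what the paper does: Lemma \ref{lem:multiscale1} proves, by induction on the number of scales $l$, a bound with prefactor $x_l=a^{(\alpha(\tga-\gd_2))^{-l}}$, decomposing over the landing point of $\tau_{x_l}$ and applying the induction hypothesis (crucially stated with an offset $d$, which is what makes the recursion close) to the remaining renewal; the geometric relation $y_j^{\alpha}=2^{-\alpha}x_{j-1}$ makes each scale contribute a bounded factor. So the ``dedicated technical lemma'' you defer to is not a routine combination of Proposition \ref{doneybis} with the tail of $K$: it is the multi-scale heart of the proof. Your outer reduction (conditioning on $\tau_{a-1}=k$, summing $K$-type bounds against $\bar K(N-k)$ split at $N/2$, and absorbing $a^{C\gd}$ into $a^{\gep}$) is fine, but the core estimate as you have sketched it would miss the main error terms.
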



\begin{proof}[Proof of Proposition \ref{ncontactvrai}]
Let us fix $\gep>0$. As $\go$ is non-positive, the definition of $Z^{\go,\gb}_N(A)$ implies that for every $A$
\begin{equation}
Z^{\go,\gb}_N(A)\le \bP(A).
\end{equation}
Therefore, Lemma \ref{pasmieu} gives us directly that one can find $\gd$ such that for $a$ large enough one has
\begin{equation}
Z^{\go,\gb}_{N}(|\tau\cap[0,N]|= a\ ; \ \forall x\in [a^{\gd},a-1], \tau_x > x^{\tilde \alpha(1-\gd)})
\le a^{\gep}N^{-\alpha}\max(a^{-\alpha\tilde\alpha},N^{-1})/2.\label{truc0}
\end{equation}
Let us show now that
\begin{equation}\label{truc}
 Z^{\go,\gb}_{N}(|\tau\cap[0,N]|= a,  \ \exists x\in [a^{\gd},a-1], \tau_x 
\le x^{\tilde \alpha(1-\gd)})\le a^{\gep}N^{-\alpha}\max(a^{-\alpha\tilde\alpha},N^{-1})/2,
\end{equation} 
(which combined with \eqref{truc0} gives the first part of \eqref{akak}).
We do so by decomposing over all possible values for $x$ and $\tau_x$.
\begin{multline}
  Z^{\go,\gb}_{N}(|\tau\cap[0,N]|= a,  \ \exists x\in[ a^{\gd},a-1], \tau_x 
\le x^{\tilde \alpha(1-\gd)})\le \sum_{x=a^{\gd}}^{a-1} \sum_{y=x}^{x^{\tilde \alpha(1-\gd)}} Z^{\go,\gb}_{N}(\tau_x=y ; \tau_a>N)\\
\le \sum_{x=a^{\gd}}^{a-1} \sum_{y=x}^{x^{\tilde \alpha(1-\gd)}\wedge N} Z^{\go,\gb}_{y}(\tau_x=y)\bP(\tau_{a-x}>N-y).
\end{multline}
Using Lemma \ref{durdur} one gets that the above is smaller than

\begin{equation}\label{trucre}
 \sum_{x=a^{\gd}}^{a-1} \sum_{y=x}^{x^{\tilde \alpha(1-\gd)}\wedge N} e^{-x^{\delta/2}} \bP(\tau_{a-x}>N-y).
\end{equation}
If $a \ge N^{\delta}$ then $e^{-x^{\delta/2}}\le e^{-N^{\gd^3/2}}$ so that \eqref{trucre} is smaller than
$N^2 e^{-N^{\gd^3/2}}$ and \eqref{truc} holds.
If $a\le N^{\delta}$ and $\delta$ is small enough, from Proposition \ref{doneybis}, $\bP(\tau_{a-x}>N-y)\le 2 a\bar K(N)$, 
and therefore one has
\begin{equation}
 \sum_{x=a^{\gd}}^{a-1} \sum_{y=x}^{x^{\tilde \alpha(1-\gd)}\wedge N} e^{-x^{\delta/2}} \bP(\tau_{a-x}>N-y)
 \le 	2a a^{\tilde \alpha(1-\gd)+1}e^{-a^{\delta^2/2}}\bar K(N)
\end{equation}
which implies \eqref{truc}. 

\medskip

For the case $a> N^{\frac{1}{\tilde\alpha}+\gd}$, the left-hand side of \eqref{truc0} is equal to zero for $\gd$ small enough,
since the condition $\tau_a >a^{\tilde\ga (1-\gd)}>N^{1+\gd(\tga-1-\gd)}$ would contradict the event $\{|\tau\cap[0,N]|=a\}$.
Moreover
the left-hand side of \eqref{truc} is smaller than $N^2 e^{-N^{\gd^3/2}}\le e^{-N^{\gd^4}}$ for $N$ large enough so that  
Proposition \ref{ncontactvrai} is proved.
\end{proof}

\begin{proof}[Proof of Lemma \ref{durdur}]
Note that if one wants to visit only a few $\hat\tau$-stretches, one has to put
a lot of contacts in very few $\hat\tau$-stretches.
One then notices that according to Lemma \ref{lem:boundmax}, if $y$ is larger than some $N_{0}(\hat \tau)$,
the longest $\hat \tau$-stretch in the interval $[0,y]$ is of length smaller 
than $y^{1/\tilde \alpha}\log y\le \tilde \alpha x^{1-\delta}\log x$ for
 any $y\le x^{(1-\delta)\tilde \alpha}$. For that reason if $\tau_x=y$, with $x\geq N_0(\hat\tau)$ and for the
values of $y$ considered, there cannot be a $\hat\tau$-stretch longer than $x^{1-3\gd/4}$ so that
\begin{equation}
V^{\hat \tau}_x(\tau)\ge \frac{x}{\max\{\tau_{i+1}-\tau_i\ | \ \tau_i\le y\}}\ge  x^{3\gd/4},
\end{equation}
and from \eqref{pioutrrr} one gets that for $x$ large enough
\begin{equation}
 \bbE\left[ \sum_{y=x}^{x^{(1-\delta)\tilde \alpha}} Z^{\go,\gb}_y (\tau_x=y)\right]
     \leq \sum_{y=x}^{x^{(1-\gd)\tga}} \bP(\tau_x=y)\left(\frac{1+e^{-\gb}}{2}\right)^{x^{3\gd/4}} 
     \le \left(\frac{1+e^{-\gb}}{2}\right)^{x^{3\gd/4}}
\end{equation}
Using the Markov inequality and the Borel-Cantelli Lemma, one gets that there exists a (random) $x_0(\go)$ such that for all $x\ge x_0(\go)$
\begin{equation}
\sum_{y=x}^{x^{(1-\delta)\tilde \alpha}} Z^{\gb,\go}_y(\tau_x=y)\le \exp\left(-x^{\gd/2}\right).
\end{equation}
\end{proof}

The condition $\forall x\ge a^{\gd}, \tau_x> x^{(1-\gd)\tilde \alpha}$ implies that $\tau_x$ is
stretched out at all scales, and one has to sum over
the different ways of stretching $\tau$.
Thus Lemma \ref{pasmieu} requires a multi-scale analysis and for the sake of clarity, we restate
it in an apparently more complicated version.
One reason for doing so is that it allows to do a proof by induction.

\begin{lemma}
For all values of $l\in\N$, if $\gd_2\le \gd(l)$ there exists a constant $C(l)$ such
that for all $N\in\N$ and $a\in\N$ large enough with $a\le N^{\frac{1}{\tilde \alpha-\gd_2}}$, one has
\begin{multline}\label{albur}
  \max_{d\in [0,a^{\alpha^{-l}(\tilde \alpha-\gd_2)^{-l+1}}/2]} \bP \left[ |\tau\cap[0,N-d]|=a\ ; \ 
\forall x \in[ a^{\gd_2},a], \tau_x> x^{\tilde \alpha-\gd_2}-d\right]
\\
\le C(l) N^{-\alpha} a^{(\alpha(\tilde \alpha-\gd_2))^{-l}}
\max(a^{-\alpha(\tilde \alpha-\gd_2)},N^{-1}).
 \end{multline}
\label{lem:multiscale1}
\end{lemma}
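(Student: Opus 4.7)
The plan is to prove Lemma~\ref{lem:multiscale1} by induction on $l \geq 0$, exploiting the stretching condition at one additional scale at each step.

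For the \emph{base case $l = 0$}, the right-hand side reduces to $C(0) N^{-\ga} a \max(a^{-\ga(\tga-\gd_2)}, N^{-1})$, and only the stretching at the top scale $x = a$ is needed. Since $d$ is small compared to $a^{\tga-\gd_2}$, the stretching condition gives $\tau_a > a^{\tga-\gd_2}/2$. Decomposing over the value of $\tau_a$,
\begin{equation*}
\bP\bigl[|\tau\cap[0,N-d]|=a,\, \tau\text{ stretched}\bigr] \leq \sum_{s=\lceil a^{\tga-\gd_2}/2 \rceil}^{N-d} \bP(\tau_a = s)\, \bar K(N-d-s),
\end{equation*}
and invoking the one-big-jump estimate $\bP(\tau_a = s) \leq C a\, s^{-(1+\ga)}$ (of Proposition~\ref{doneybis}-type), the sum is controlled by distinguishing the regimes $N \leq a^{\ga(\tga-\gd_2)}$ and $N > a^{\ga(\tga-\gd_2)}$, producing respectively the $N^{-\ga-1}$ and $a^{-\ga(\tga-\gd_2)}$ contributions that combine into the required $\max$.

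For the \emph{inductive step $l-1 \Rightarrow l$}, I would choose the intermediate index $b := \lceil a^{1/(\ga(\tga-\gd_2))} \rceil$. The key algebraic fact is that the $d$-range allowed at level $l-1$ applied to scale $b$, namely $b^{\ga^{-(l-1)}(\tga-\gd_2)^{-l+2}}/2$, coincides exactly with the range at level $l$ applied to scale $a$, namely $a^{\ga^{-l}(\tga-\gd_2)^{-l+1}}/2$; this ensures the induction closes cleanly. Decomposing on $\tau_b = s$, one writes the target probability as a sum over $s$ of a product $\bP_1(s)\cdot\bP_2(s)$, where $\bP_1(s)$ is the probability that $\tau_b = s$ together with $\tau$ stretched on $[a^{\gd_2}, b]$, and $\bP_2(s)$ is the conditional probability given $\tau_b = s$ that $\tau$ has $a-b$ further contacts in $(s, N-d]$ and is stretched on $(b, a]$. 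Apply the level-$(l-1)$ inductive hypothesis to $\bP_1(s)$ with $b$ in place of $a$, and bound $\bP_2(s)$ by a base-case-style estimate using the stretching at scale $a$ (which forces $\tau_a-\tau_b$ to be of order $a^{\tga-\gd_2}$). Summing over $s$ and verifying the exponent transformation $(\ga(\tga-\gd_2))^{-(l-1)} \cdot (\ga(\tga-\gd_2))^{-1} = (\ga(\tga-\gd_2))^{-l}$ closes the induction.

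The \emph{main obstacle} is the careful bookkeeping of scales, in particular: (i) controlling the growth of the constants $C(l)$ (ideally polynomial in $l$) so that in the application back to Lemma~\ref{pasmieu} one can take $l$ large enough to make the exponent $(\ga(\tga-\gd_2))^{-l}$ smaller than any prescribed $\gep$ (this is what the condition $\gd_2 \leq \gd(l)$ encodes); (ii) verifying that the original stretching condition on $[a^{\gd_2}, b] \subsetneq [b^{\gd_2}, b]$ still suffices to invoke the inductive hypothesis at scale $b$, which should follow from the observation that on the missing range $[b^{\gd_2}, a^{\gd_2})$ the constraint $\tau_x > x^{\tga-\gd_2} - d$ is automatic because $x^{\tga-\gd_2}$ is negligible there compared to $\tau_x \geq x$; and (iii) the compatibility of the shift $d$ across the two scales, which is precisely what dictates the choice $b = a^{1/(\ga(\tga-\gd_2))}$.
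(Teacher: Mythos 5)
Your base case is essentially the paper's: decompose over the (forced large) value of $\tau_{a-1}$, apply the one‑big‑jump estimate of Proposition~\ref{doneybis}, and split according to whether $N$ is larger or smaller than $a^{\ga(\tga-\gd_2)}$. The inductive step, however, has a genuine gap. You apply the induction hypothesis at level $l-1$, with $b=x_1\approx a^{1/(\ga(\tga-\gd_2))}$ in place of $a$, to the \emph{initial} segment of the trajectory. But the hypothesis at scale $b$ concerns the event with stretching constraints for all $x\in[b^{\gd_2},b]$, whereas the original event only carries constraints on $[a^{\gd_2},b]$, a strictly smaller set of constraints and hence a strictly \emph{larger} event; an upper bound on the more constrained event says nothing about the less constrained one. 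Your proposed fix --- that on $[b^{\gd_2},a^{\gd_2})$ the constraint $\tau_x>x^{\tga-\gd_2}-d$ is automatic because ``$x^{\tga-\gd_2}$ is negligible compared to $\tau_x\ge x$'' --- is backwards: since $\tga-\gd_2>1$ one has $x^{\tga-\gd_2}\gg x$, so these are real constraints that a typical trajectory violates. A second problem is that the induction hypothesis controls the interval‑counting event $\{|\tau\cap[0,M]|=b\}$, not the local event $\{\tau_b=s\}$ appearing in your $\bP_1(s)$; passing from one to the other is not immediate and would require a separate local refinement. Finally, for $s$ of order $N$ the top‑scale constraint on the tail becomes vacuous, so the ``base‑case‑style'' bound on $\bP_2(s)$ fails there and that regime needs the separate renewal‑mass argument.

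The paper's induction goes the other way around, and this is what makes the constraint bookkeeping close. One peels off the \emph{smallest} scale $x_l=a^{(\ga(\tga-\gd_2))^{-l}}$ and decomposes over $\tau_{x_l}=d_1-d$: the stretching at $x=x_l$ forces $\tau_{x_l}\ge y_l\gg x_l$, so Proposition~\ref{doneybis} gives the local estimate $\bP(\tau_{x_l}=d_1-d)\approx x_lK(d_1-d)$ directly. The remaining $a-x_l$ contacts then satisfy $\tau_x\ge (x+x_l)^{\tga-\gd_2}-d_1\ge x^{\tga-\gd_2}-d_1$ for \emph{all} $x$, i.e.\ the tail event is contained in (not containing) the induction event with the new shift $d_1$, so the upper bound transfers legitimately; the level $j-1$ at which the hypothesis is invoked depends on which window $(y_j,y_{j-1}]$ contains $d_1$, and the identity $y_j^{\ga}=2^{-\ga}x_{j-1}$ makes the sum over $d_1$ contribute only a constant per level. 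If you want to keep your head/tail factorization, you would have to reformulate the induction so that the constraint range is anchored at the original bottom scale $a^{\gd_2}$ and so that the inducted statement is local in $\tau_b$; as written, the step does not go through.
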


\begin{rem}\rm
\label{rem:multiscale}
The probability of the event on the right hand side of \eqref{albur} is zero when $a> N^{\frac{1}{\tilde \alpha-\gd_2}}$ as 
 $|\tau\cap[0,N-d]|=a$ implies $\tau_{a-1}\le N-d$. Therefore the result holds in fact for all $a$.
Using \eqref{wouhou} one notices that the result holds for all $a$ and $N$ (after eventually changing the constant $C(l)$).

\smallskip

One gets Lemma \ref{pasmieu} from this by taking $\gd_2$ small enough and $l$, $a$ large enough and $d=0$.
The reason we prove the result for all $d\in [0,a^{\alpha^{-l}(\tilde \alpha-\gd_2)^{-l+1}}/2]$ and not only for $d=0$
 is to make the induction step in the proof work.
\end{rem}

\begin{proof}

We introduce some additional notation that will make the proof more readable.
We define for all $j\geq 0$
\begin{equation}\begin{split}
x_j&:=a^{(\alpha(\tilde \alpha-\gd_2))^{-j}} \quad (x_0=a),\\
y_j&:=\frac12 x_j^{\tilde\ga-\gd_2}=\frac12 a^{\ga^{-j}(\tilde\ga-\gd_2)^{-j+1}}.\\
\end{split}\end{equation} 
With these notation \eqref{albur} reads
\begin{multline}\label{inductionst}
  \max_{d\in [0,y_l]} \bP \left[ |\tau\cap[0,N-d]|=a\ ; \
\forall x\in[ a^{\gd_2},a-1], \tau_x\ge x^{\tilde \alpha-\gd_2}-d\right]\\
\le C(l) N^{-\alpha}x_l \max(y_0^{-\ga},N^{-1}).
\end{multline}
Note that $x_j$ and $y_j$ are decreasing in $j$, and also that 
provided that $\gd_2$ is small enough one has for any $j\ge 0$, that both $x_j$ and $y_j$ tends to infinity with $a$ and
\begin{equation}\label{lunplugran}
y_j\gg x_j.
\end{equation}
Let us start with the proof of the case $l=0$. 
On the event we consider, $\tau_{a-1}$ has to be larger
than $(a-1)^{\tilde \alpha-\gd_2}-d$, i.e.\ larger than what it would typically be under $\bP$.
We use Proposition \ref{doneybis} to bound from above the probability of this event.
The quantity we have to bound is smaller than
\begin{multline}
 \bP\left[\tau_a>N-d \ ;\  \tau_{a-1} \in \big((a-1)^{\tilde \alpha-\gd_2}-d,N-d\big]\right]\\
=
\sum_{y=(a-1)^{\tilde \alpha-\gd_2}+1-d}^{N-d} \bP(\tau_{a-1}=y)\bP(\tau_1> N-y-d)\\
=(1+o(1))\!\!\!\! \sum_{y=(a-1)^{\tilde \alpha-\gd_2}+1-d}^{N-d} \!\!\!\! a K(y)\bar K(N-d-y)\\
\le
   C(0) a N^{-\alpha} (a^{-\alpha(\tilde\alpha-\gd_2)}\vee N^{-1}),
\end{multline}
where here (and later in the proof) $o(1)$ denotes a quantity that goes to zero
when both $a$ and $N$ are large. Proposition \ref{doneybis} was used to get from the second to the third line,
the last inequality coming from a straightforward computation, using the assumption on $K(\cdot)$.

\medskip

 We assume now that \eqref{inductionst} holds  for all $l'< l$ and prove it for $l$. 
 Fix $d\le y_l$. Assume that $\gd_2=\gd_2(l)$ is small enough, so that $x_l\ge a^{\gd_2}$.
We decompose over all the possible values for $\tau_{x_l}$ and use the Markov property for the renewal process.
The l.h.s.\ of \eqref{inductionst} is smaller than
\begin{multline}\label{ddone}
 \bP\left[|\tau\cap[0,N-d]|=a\ ;\ \forall x\in[ x_l,a-1], \tau_x> x^{\tilde \alpha-\gd_2}-d\right]
=
\sum_{d_1=2y_l+1}^{N} \bP\left(\tau_{x_l}=d_1-d\right)\\ \times
\bP\left[|\tau\cap[0,N-d_1]|=a-x_l \ ; \forall x \in[0,a-x_l-1] ,
\tau_x\ge  (x+x_l)^{\tilde \alpha-\gd_2}-d_1\right].
\end{multline}
 On the event we are considering in $\eqref{inductionst}$, $\tau_{x_l}$ has to be larger
than $2y_l-d\ge y_l$ i.e. larger than what it would typically be under $\bP$ (cf. \eqref{lunplugran}).
Therefore $\bP\left(\tau_{x_l}=d_1-d\right)$ can always be estimated by using Proposition \ref{doneybis}.
If $d_1\le y_i$, the quantity
\begin{multline}
\bP\left[|\tau\cap[0,N-d_1]|=a-x_l \ ; \forall x \in[0,a-x_l] ,
\tau_x\ge  (x+x_l)^{\tilde \alpha-\gd_2}-d_1\right]\\
\le \bP\left[|\tau\cap[0,N-d_1]|=a-x_l \ ; \forall x \in[0,a-x_l] ,
\tau_x\ge  x^{\tilde \alpha-\gd_2}-d_1\right]
\end{multline}
can be estimated by using the induction hypothesis \eqref{inductionst} for $i<l$. 
For this reason we decompose the sum in the right hand side of \eqref{ddone} in $l$ terms,
corresponding to $d_1\in (2y_l,y_{l-1}]$, $d_1\in(y_j,y_{j-1}]$ ($j\in\{1,\dots l-1\}$)  and $d_1\in (y_0, N]$.
When $d_1>y_0$ one cannot use the induction step and for this reason  the contribution from  $d_1\in (y_0, N]$ is dealt with separately.

Notice that
\begin{multline}\label{perzr}
 \sum_{d_1 \in (y_j,y_{j-1}]}  \bP\left(\tau_{x_l}=d_1-d\right) \\
 \times \bP\left[|\tau\cap[0,N-d_1]|=a-x_l \ ; \forall x \ge 0 ,
\tau_x\ge  (x+x_l)^{\tilde \alpha-\gd_2}-d_1\right]\\
\le (1+o(1))\sum_{d_1 \in (y_j,y_{j-1}]} x_l K(d_1-d) C(j-1)  N^{-\alpha}x_{j-1}\max(y_0^{-\ga},N^{-1}) \\ 
\le C'(j)x_l y_j^{-\alpha}   x_{j-1}\max(y_0^{-\ga},N^{-1}).
\end{multline}
From the definitions of $x_j$ and $y_j$ one has
$y_j^{\alpha}=\frac{1}{2^{\ga}} x_{j-1}$, so that $y_j^{-\ga}x_{j-1}= 2^{\alpha}$ for all $j\geq1$.
The term corresponding to $d_1\in (2y_l,y_{l-1}]$ can be dealt with in the same manner.

\smallskip

Now we estimate the sum on $d_1\in (y_0, N]$. By Proposition \ref{doneybis} one has
\begin{multline} \label{lastpiece}
 \sum_{d_1 \in  (y_0, N]}  \bP\left(\tau_{x_l}=d_1-d\right)\bP\left( |\tau \cap [0,N-d_1]|=a-x_l\right)\\
\le (1+o(1))\sum_{d_1 \in  (y_0, N]} x_l K(d_1-d)\bP\left( |\tau \cap [N-d_1]|=a-x_l\right).
\end{multline}
If $d_1$ is less that $N/2$, then choosing $\delta$ small enough 
\begin{multline} \label{stroumfphlala}
\bP\left( |\tau \cap [N-d_1]|=a-x_l\right)=
 \sum_{x=1}^{N-d_1} \!\!\! \bP\left(\tau_{a-x_l-1}=x\right) \bar K(N-d_1-x)\\
=\sum_{x=1}^{N^{1-\delta}} \bP\left(\tau_{a-x_l-1}=x\right) \bar K(N-d_1-x)+ 
\sum_{x=N^{1-\delta}+1}^{N-d_1} \bP\left(\tau_{a-x_l-1}=x\right) \bar K(N-d_1-x)
\\=(1+o(1))\left[ \bar K(N-d_1)+\sum_{x=N^{1-\delta}+1}^{N-d_1} a K(x) \bar K(N-d_1-x)\right]
\le c(l) N^{-\alpha},
\end{multline}
where we made use of $\bar K(N-d_1-x)=\bar K(N-d_1)(1+o(1))$ for $x\le N^{1-\delta}$, and of Proposition \ref{doneybis}
for $x>N^{1-\gd}$. Note that we also used the restriction $a\leq N^{\frac{1}{\tga-\gd_2}}$ for the last inequality,
to get that $aN^{-\ga(1-\gd)}\leq 1$. Hence one has
\begin{equation} \label{lesd}
\sum_{d_1 \in  (y_0, N/2]} x_l K(d_1-d)\bP\left[ |\tau \cap [0,N-d_1]|=a-x_l\right]\le
 c'(l) x_l y_0^{-\alpha}N^{-\alpha},
\end{equation}
for $c'(l)$ large enough.
To estimate the contribution of $d_1\in(N/2,N]$, one notices that 
\begin{multline}
 \sum_{L=0}^{\infty}\bP\left( |\tau \cap [0,L]|=a-x_l\right)= \bE\Big[ \#\{L\in\R,\ L\in[\tau_{a-x_l},\tau_{a-x_l+1}) \} \Big]\\
   = \bE[\tau_{a-x_l+1}-\tau_{a-x_l}]=\bE[\tau_1].
\end{multline}
so that
\begin{multline}
\sum_{d_1 \in (N/2,N]} x_l K(d_1-d)\bP\left( |\tau \cap [N-d_1]|=a-x_l\right)\\
\le  c(l) x_l N^{-(1+\alpha)}\sum_{d_1 \in (N/2,N]} \bP\left[ |\tau \cap [N-d_1]|=a-x_l\right]\le c'(l)x_l N^{-(1+\alpha)}.
\end{multline}
This together with \eqref{perzr} and \eqref{lesd} gives the result.

\end{proof}

\subsection{Proof of Proposition \ref{ncontactvrai} in the case $\ga<1$}

One has to adapt Lemmata \ref{durdur} and \ref{pasmieu} to this new case.
The difference lies in the following fact: as here the renewal does not
have finite mean, one needs a stretch of length much longer than $x$ 
to set $x$ contacts on the defect line.

\begin{lemma}\label{durdur2}
Given $\gd>0$, there exists some $x_0(\go,\gb,\gep)$ such that for every $x\ge x_0$, every 
$y\in[x,x^{\frac{\tilde \alpha}{\alpha}(1-\gd)}]$  
one has
\begin{equation}
 Z^{\go,\gb}_{y}(\tau_x=y)\le e^{-x^{\gd/8}}.
\end{equation}
\end{lemma}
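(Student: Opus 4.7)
The plan is to adapt the proof of Lemma \ref{durdur}, but a direct transcription fails. By Lemma \ref{lem:boundmax}, for $y \leq x^{\tga(1-\gd)/\ga}$ and $\hat\tau$ in a set of full probability, every $\hat\tau$-stretch meeting $[0,y]$ has length at most $M := y^{1/\tga}\log y \leq x^{(1-\gd)/\ga}\log x$. Since $\ga<1$ this quantity is much larger than $x$, so the deterministic pigeonhole $V_y^{\hat\tau}(\tau)\geq x/M$ used in Lemma \ref{durdur} gives no useful lower bound on the number of visited stretches. I would instead combine the annealed energetic bound \eqref{pioutrrr} with a small-deviation estimate for the renewal $\tau$.

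First I would introduce the threshold $V_0 := \lceil x^{\gd/2}\rceil$ and split
\begin{equation*}
Z_y^{\go,\gb}(\tau_x=y) = Z_y^{\go,\gb}\bigl(\tau_x=y,\, V_y^{\hat\tau}(\tau)\ge V_0\bigr) + Z_y^{\go,\gb}\bigl(\tau_x=y,\, V_y^{\hat\tau}(\tau) < V_0\bigr).
\end{equation*}
Taking $\bbE$ conditionally on $\hat\tau$ and using \eqref{pioutrrr}, the first contribution is at most $r^{V_0}\,\bP(\tau_x=y) \le r^{V_0}$ with $r := (1+e^{-\gb})/2 < 1$, hence decays like $e^{-c x^{\gd/2}}$. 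For the second one, since $\tau$ is increasing the contacts of $\tau$ falling inside any given $\hat\tau$-stretch are consecutive, so by pigeonhole there exist indices $0\le i_0 < i_1 \le x$ with $i_1 - i_0 \ge x/V_0$ and $\tau_{i_1}-\tau_{i_0}\le M$. A union bound together with the renewal property of $\tau$ gives
\begin{equation*}
\bP\bigl(\tau_x=y,\, V_y^{\hat\tau}(\tau)<V_0\bigr) \le \sum_{\substack{0\le i_0 < i_1 \le x\\ i_1-i_0\ge x/V_0}} \bP(\tau_{i_1-i_0} \le M) \le x^2 \max_{n\ge x/V_0} \bP(\tau_n \le M).
\end{equation*}

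The heart of the argument is then a small-deviation estimate for $\tau$. Assumption \eqref{assump:K} and a Tauberian argument give $\bE[e^{-\lambda \tau_1}] \le \exp(-c \lambda^{\ga}/2)$ for $\lambda$ small, and the standard exponential Chebyshev bound $\bP(\tau_n\le M)\le e^{\lambda M}\bE[e^{-\lambda\tau_n}]$ optimized in $\lambda$ then yields
\begin{equation*}
\bP(\tau_n \le M) \le \exp\bigl(-c' (n/M^{\ga})^{1/(1-\ga)}\bigr).
\end{equation*}
Plugging in $n \ge x^{1-\gd/2}$ and $M \le x^{(1-\gd)/\ga}\log x$ one finds $n/M^{\ga} \geq x^{\gd/2}/(\log x)^{\ga}$, and since $1/(1-\ga)>1$ we obtain $\bP(\tau_n \le M)\le e^{-c'' x^{\gd/2}}$.

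Combining the two cases, $\bbE[Z_y^{\go,\gb}(\tau_x=y)] \le e^{-c_1 x^{\gd/3}}$ uniformly in $y$ in the prescribed range. Summing over the at most $x^{\tga/\ga}$ admissible values of $y$, applying Markov's inequality with threshold $e^{-x^{\gd/8}}$ and then Borel--Cantelli (using also that the conclusion of Lemma \ref{lem:boundmax} holds eventually almost surely) produces a random threshold $x_0(\go,\gb,\gd)$ as required. The main obstacle is the small-deviation estimate for $\tau$ in the third paragraph: unlike in the case $\ga>1$ treated in Lemma \ref{durdur}, no deterministic lower bound on $V_y^{\hat\tau}(\tau)$ is available, and one genuinely has to quantify how improbable it is for a renewal with $\ga<1$ to compress a large number of its steps into a short interval.
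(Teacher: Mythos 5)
Your proof is correct and follows essentially the same route as the paper's: the same split on $V_y^{\hat\tau}(\tau)\lessgtr x^{\gd/2}$, the annealed bound \eqref{pioutrrr} plus Markov and Borel--Cantelli for the many-stretches part, and the pigeonhole argument combined with Lemma \ref{lem:boundmax} for the few-stretches part. The only divergence is the final small-deviation estimate, where the paper simply bounds $\bP(\tau_n\le M)\le\left[\bP(\tau_1\le M)\right]^n\approx e^{-c\,n M^{-\ga}}$ (forcing all $n$ increments to be at most $M$), which already gives $e^{-x^{\gd/4}}$ and makes your Laplace-transform Chernoff bound unnecessary, though the latter is also valid.
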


 \begin{lemma}\label{pasmieu2}
If $\alpha<1$,
for any $\gep>0$ there exists $\gd>0$ and $a_0\in \N$ such that for all $a\ge a_0$, for all $N$ one has
\begin{equation}
\bP\left[ |\tau\cap[0,N]|= a\ ; \ \forall x\in[ a^{\gd}a], \tau_x > x^{\frac{\tilde \alpha}{\alpha}(1-\gd)}\right]
\le a^{\gep-\tilde\alpha}N^{-\alpha}/2.
 \end{equation}
\end{lemma}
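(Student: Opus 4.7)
The plan is to adapt the multi-scale induction of Lemma \ref{lem:multiscale1} (the analog of Lemma \ref{pasmieu} proved for $\ga>1$) to the regime $\ga<1$. The essential difference is that typical values of $\tau_x$ are now of order $x^{1/\ga}$ rather than $x$, so the stretching hypothesis forces $\tau_x$ to be of size at least $x^{\kappa}$ with $\kappa:=\tga/\ga-\gd_2$, where $\gd_2$ is a small parameter to be tuned in terms of $\gep$. Setting $\rho:=\ga\kappa=\tga-\ga\gd_2>1$, I would introduce the scales $x_j:=a^{\rho^{-j}}$ and $y_j:=\tfrac{1}{2}x_j^{\kappa}$; these still satisfy the identity $y_j^{\ga}=x_{j-1}/2^{\ga}$ that makes the telescoping work, and also $y_j\gg x_j^{1/\ga}$ (because $\kappa>1/\ga$), which is what will allow Proposition \ref{doneybis} to apply throughout the induction.

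I would then prove, by induction on $l\geq 0$, the following analog of \eqref{inductionst}: for $\gd_2\leq\gd(l)$ there exists $C(l)$ such that for every $N$ and every $a$ large enough with $a\leq N^{1/\kappa}$,
\begin{equation*}
\max_{d\in[0,y_l]}\bP\!\left[|\tau\cap[0,N-d]|=a\,;\,\forall x\in[a^{\gd_2},a-1],\,\tau_x\geq x^{\kappa}-d\right]\leq C(l)\,N^{-\ga}\,x_l\,y_0^{-\ga}.
\end{equation*}
The $\max(y_0^{-\ga},N^{-1})$ factor appearing for $\ga>1$ is unnecessary here, because $a\leq N^{1/\kappa}$ gives $y_0\leq N/2$ and hence $y_0^{-\ga}\geq 2^{\ga}N^{-\ga}\geq N^{-1}$ (using $\ga<1$). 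Taking $l$ large enough that $x_l=a^{\rho^{-l}}\leq a^{\gep/4}$ and $\gd_2$ small enough that $\ga\gd_2\leq\gep/4$ yields $y_0^{-\ga}=2^{\ga}a^{-\tga+\ga\gd_2}$, so the right-hand side at $d=0$ is bounded by $C(l)\,a^{\gep/2-\tga}N^{-\ga}$, which for $a\geq a_0(\gep,l)$ is $\leq a^{\gep-\tga}N^{-\ga}/2$, i.e.\ the target. Choosing $\gd=\ga\gd_2/\tga$ then ensures $x^{\tga/\ga\,(1-\gd)}\geq x^{\kappa}$, so the event in Lemma \ref{pasmieu2} is contained in the one bounded by the multi-scale statement.

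The base case $l=0$ is a direct computation: since $\kappa>1/\ga$, Proposition \ref{doneybis} gives $\bP(\tau_{a-1}=y)=(1+o(1))(a-1)K(y)$ uniformly for $y\geq(a-1)^{\kappa}-d$, and splitting $\sum_y (a-1)K(y)\bar K(N-d-y)$ at $y=N/2$ yields (i) a main piece $y\leq N/2$ of order $aN^{-\ga}\bar K((a-1)^{\kappa})\asymp aN^{-\ga}a^{-\rho}=x_0y_0^{-\ga}N^{-\ga}$ and (ii) a remainder $y>N/2$ of order $aN^{-1-\ga}\sum_{z\leq N/2}\bar K(z)\leq CaN^{-2\ga}$ (using $\ga<1$, hence $\sum_{z\leq N/2}\bar K(z)\leq CN^{1-\ga}$), which is absorbed into $x_0y_0^{-\ga}N^{-\ga}$ thanks to $y_0\leq N$. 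The induction step mirrors \eqref{ddone}--\eqref{perzr}: decompose on $\tau_{x_l}=d_1-d$ with $d_1\in(2y_l,N]$ and split the sum according to $d_1\in(y_j,y_{j-1}]$ for $j=l-1,\ldots,1$ (together with the border block $d_1\in(2y_l,y_{l-1}]$) and the final range $d_1\in(y_0,N]$. For $d_1\leq y_0$, the bound $d_1-d\geq y_l\gg x_l^{1/\ga}$ lets Proposition \ref{doneybis} give $\bP(\tau_{x_l}=d_1-d)\lesssim x_l K(d_1-d)$, the inductive hypothesis applies (with shifted boundary $d'=d_1\in[0,y_{j-1}]$ and shifted length $N-d_1\asymp N$ which still satisfies $a-x_l\leq(N-d_1)^{1/\kappa}$ up to constants), and the identity $y_j^{-\ga}x_{j-1}=2^{\ga}$ makes every scale contribute only a bounded multiplicative constant.

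The main obstacle will be the residual range $d_1\in(y_0,N]$, where the induction is no longer available. Following the scheme of \eqref{stroumfphlala}--\eqref{lastpiece}, I would bound $\bP(|\tau\cap[0,N-d_1]|=a-x_l)$ directly by decomposing on $\tau_{a-x_l-1}$ and cutting at $x=N^{1-\gd_3}$; the constraint $a\leq N^{1/\kappa}$ with $\gd_2,\gd_3$ small enough ensures $aN^{-\ga(1-\gd_3)}\leq 1$, so this piece is of order $N^{-\ga}$ for $d_1\leq N/2$. The genuine novelty compared to the $\ga>1$ proof is that $\sum_L\bP(|\tau\cap[0,L]|=k)=\bE[\tau_1]$ (used at \eqref{lastpiece}) diverges for $\ga<1$; I would instead use the finite-horizon estimate $\sum_{M\leq N/2}\bP(|\tau\cap[0,M]|=k)\leq\bE[\tau_1\wedge(N+1)]\leq CN^{1-\ga}$, which turns the $d_1>N/2$ contribution into $\lesssim x_lN^{-1-\ga}\cdot N^{1-\ga}=x_lN^{-2\ga}\leq x_l y_0^{-\ga}N^{-\ga}$ (again using $y_0\leq N$), so it is absorbed into the inductive target.
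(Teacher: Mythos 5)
Your proposal is correct and follows essentially the same route as the paper: the same multi-scale induction with scales $x_j=a^{(\tga-\ga\gd_2)^{-j}}$ and $y_j=\tfrac12 x_j^{\tga/\ga-\gd_2}$, the same base case, the same decomposition over $\tau_{x_l}=d_1-d$ split according to the ranges $(y_j,y_{j-1}]$, and the same replacement of the divergent $\bE[\tau_1]$ by the finite-horizon bound $\bE[\tau_1\wedge N]\lesssim N^{1-\ga}$ for the contribution of $d_1>N/2$. The only slip is a mislabelled citation: the local large-deviation estimate $\bP(\tau_n=x)\sim nK(x)$ that you invoke for the null-recurrent renewal $\tau$ (which has $\ga<1$) is Proposition \ref{prop:doney}, not Proposition \ref{doneybis}, the latter being the positive-recurrent analogue.
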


The proof from the two Lemmas of the case $\ga<1$ in Proposition \ref{ncontactvrai}
is exactly the same as in the case $\alpha>1$, and therefore we leave it to the reader.

\begin{proof}[Proof of Lemma \ref{durdur2}]
 First note that if one wants to visit only a limited number of stretches after $x$ jumps (say less than $x^{\gd/2}$),
one must do at least $x^{1-\gd/2}$ jumps in the same stretch.
On the other hand, note that provided $x$ is large enough, from Lemma \ref{lem:boundmax} 
the longuest $\hat\tau$-stretch in $[0,y]$ for $y\leq x^{\frac{\tga}{\ga}(1-\gd)}$
has length smaller than
$x^{\frac{1-(3\delta/4)}{\alpha}}$.
For these reasons if $x$ is large enough, and
for the values of $y$ that we consider 
\begin{equation}
 \{ V_y^{\hat \tau}(\tau)\le x^{\gd/2} ; \tau_x=y\}\subset
\left\{ \exists t\in[0,x), (\tau_{t+x^{1-\gd/2}}-\tau_t)\le x^{\frac{1-(3\delta/4)}{\alpha}}\right\}.
\end{equation}
As a consequence

\begin{multline}\label{hyui}
 Z_y^{\go,\gb}( V_y^{\hat \tau}(\tau)\le x^{\gd/2}\ ; \ \tau_x=y )\le \bP\left(\exists t\in[0,x), 
(\tau_{t+x^{1-\gd/2}}-\tau_t)\le x^{\frac{1-(3\delta/4)}{\alpha}}\right)\\
\le x \bP\left(\tau_{x^{1-\gd/2}}\le x^{\frac{1-(3\delta/4)}{\alpha}}\right)
\le x \left[\bP\left(\tau_1\le x^{\frac{1-(3\delta/4)}{\alpha}}\right)\right]^{x^{1-\gd/2}}\le \frac{1}{2}e^{-x^{\gd/8}}
\end{multline}
if $x$ is large enough.
On the other hand according to \eqref{pioutrrr}
\begin{equation}
 \bbE\left[ \sum_{y=x}^{x^{\frac{\tilde \alpha}{\alpha}(1-\delta)}} Z_y^{\go,\gb}( V_y^{\hat \tau}{(\tau)}> x^{\gd/2}\ ; \tau_x=y)\right]
\le \left(\frac{1+e^{-\gb}}{2} \right)^{x^{-\delta/2}}.
\end{equation}
Using the Markov inequality and the Borel-Cantelli Lemma, one gets that there exists a (random) integer $x_0$ such that
for all $x\geq x_0$
\begin{equation}
 \sum_{y=x}^{x^{\frac{\tilde \alpha}{\alpha}(1-\delta)}} 
Z_y^{\go,\gb}\left( V_y^{\hat \tau}(\tau)> x^{\gd/2}\ ; \tau_x=y\right)\le e^{x^{-\delta/8}}/2,
\end{equation}
which together with \eqref{hyui} ends the proof.

\end{proof}

For Lemma \ref{pasmieu2} one proceeds as for Lemma \ref{pasmieu} and prove a recursive statement.

\begin{lemma}
For all values of $l$, if $\gd_2\le \gd(l)$ there exists a constant $C(l)$ such that for all $N\in\N$ and $a\in\N$ large enough 
with $a\le  N^{\ga(\tga-\ga\gd_2)^{-1}}$, one has
\begin{multline}\label{lustuc}
  \max_{d\in [0,a^{\alpha^{-1}(\tilde \alpha-\alpha \gd_2)^{-l+1}}/2]} \bP \left[ |\tau\cap[0,N-d]|=a\ ; \ 
\forall x\in[ a^{\gd_2},a-1], \tau_x> x^{\frac{\tilde \alpha}{\alpha}-\gd_2}-d\right]\\
\le C(l) N^{-\alpha}a^{(\tilde \alpha-\alpha\gd_2)^{-l}}a^{-(\tilde \alpha-\alpha\gd_2)}.
 \end{multline}
\end{lemma}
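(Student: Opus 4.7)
The plan is to adapt the inductive argument of Lemma \ref{lem:multiscale1} to the $\ga<1$ regime, with scales reflecting the fact that $\bE[\tau_1]=\infty$ and that $\tau_x$ is typically of order $x^{1/\ga}$ rather than $x$. Concretely, I would introduce
\begin{equation*}
 x_j := a^{(\tga-\ga\gd_2)^{-j}}, \qquad y_j := \tfrac12 x_j^{\tga/\ga-\gd_2}=\tfrac12 a^{\ga^{-1}(\tga-\ga\gd_2)^{-j+1}},
\end{equation*}
so that $x_0=a$, the admissible range of $d$ in \eqref{lustuc} is exactly $[0,y_l]$, the target bound reads $C(l)\,N^{-\ga} x_l\cdot a^{-(\tga-\ga\gd_2)}$ (up to a harmless factor of $2^{\ga}$), and the key scaling identity $y_j^\ga=2^{-\ga} x_{j-1}$ holds. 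For the base case $l=0$, the event forces $\tau_{a-1}\in(y_0-d,N-d]$ and $\tau_a>N-d$; decomposing on the value of $\tau_{a-1}$ yields a convolution $\sum_y \bP(\tau_{a-1}=y)\bar K(N-d-y)$, to which I would apply the Doney-type bound $\bP(\tau_{a-1}=y)\le Ca\,K(y)$ of Proposition~\ref{doneybis} (valid since $y\ge y_0/2\gg a^{1/\ga}$). Splitting the resulting sum at $(N-d)/2$ delivers $Ca\,N^{-\ga} y_0^{-\ga}$, where the hypothesis $a\le N^{\ga/(\tga-\ga\gd_2)}$ is used to dominate the right-tail contribution.

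For the inductive step I would decompose over $\tau_{x_l}=d_1-d$ via the Markov property. Since the event forces $\tau_{x_l}>2y_l-d$, the index $d_1$ ranges over $(2y_l,N]$. The shifted renewal $\tau':=(\tau_{x_l+k}-\tau_{x_l})_{k\ge 0}$ then satisfies $|\tau'\cap[0,N-d_1]|=a-x_l$ together with $\tau'_{x'}>(x'+x_l)^{\tga/\ga-\gd_2}-d_1\ge x'^{\tga/\ga-\gd_2}-d_1$, which is precisely the event of the inductive hypothesis provided $d_1\le y_0$. I would partition $d_1\in(2y_l,y_0]$ into the pieces $(y_j,y_{j-1}]$ for $j=1,\dots,l$. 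On each, Proposition~\ref{doneybis} gives $\bP(\tau_{x_l}=d_1-d)\le Cx_l\,K(d_1-d)$, summing to $Cx_l\, y_j^{-\ga}$; combined with the inductive hypothesis at level $j-1$ the contribution becomes
\begin{equation*}
 Cx_l\, y_j^{-\ga}\cdot C(j-1)\,N^{-\ga} x_{j-1}\, a^{-(\tga-\ga\gd_2)} = C'(j)\, x_l\, N^{-\ga}\, a^{-(\tga-\ga\gd_2)},
\end{equation*}
thanks to the scale collapse $y_j^{-\ga} x_{j-1}=2^\ga$. Summing over $j=1,\dots,l$ costs only a factor of $l$, absorbed into $C(l)$.

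The main obstacle is the boundary piece $d_1\in(y_0,N]$, which lies outside the reach of the inductive hypothesis. I would split it at $N/2$. The sub-piece $d_1\in(y_0,N/2]$ is handled as in \eqref{stroumfphlala}: bounding $\bP(|\tau'\cap[0,N-d_1]|=a-x_l)\le c(l)\,N^{-\ga}$ via Proposition~\ref{doneybis} (splitting the inner $x$-sum at $N^{1-\gd}$ and using the hypothesis on $a$ to ensure $aN^{-\ga(1-\gd)}\le 1$), the $d_1$-sum then yields $Cx_l\, y_0^{-\ga}\,N^{-\ga}$. The genuinely new step is the sub-piece $d_1\in(N/2,N]$: for $\ga>1$ it was closed using $\sum_L \bP(|\tau\cap[0,L]|=k)=\bE[\tau_1]<\infty$, an identity that fails here. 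I would replace it by the truncated bound
\begin{equation*}
 \sum_{L=0}^{N/2}\bP(|\tau\cap[0,L]|=k)\le \bE\bigl[\tau_1\wedge(N/2+1)\bigr]\le C N^{1-\ga},
\end{equation*}
which follows from $\bar K(n)\sim c/n^\ga$. Combined with the uniform estimate $K(d_1-d)\le CN^{-(1+\ga)}$ on $(N/2,N]$, this piece contributes at most $Cx_l\,N^{-2\ga}$; the hypothesis $a\le N^{\ga/(\tga-\ga\gd_2)}$ is precisely what ensures $N^{-\ga}\le C a^{-(\tga-\ga\gd_2)}$, so the bound is of the required form $Cx_l\,N^{-\ga}\,a^{-(\tga-\ga\gd_2)}$, completing the induction.
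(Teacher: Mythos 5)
Your proposal is correct and follows essentially the same route as the paper's proof: the same scales $x_j,y_j$, the same base case via the one-big-jump local estimate, the same decomposition over $\tau_{x_l}$ with the scale collapse $y_j^{-\alpha}x_{j-1}=2^{\alpha}$, and the same treatment of the boundary piece $d_1\in(N/2,N]$ via $\sum_{L\le N/2}\bP(|\tau\cap[0,L]|=k)\le\bE[\tau_1\wedge (N/2+1)]=O(N^{1-\alpha})$ combined with the hypothesis on $a$. The only slip is bibliographic: since $\alpha<1$ the estimate $\bP(\tau_n=y)\le C\,nK(y)$ for $y$ much larger than typical is Proposition \ref{prop:doney} (Doney's theorem for infinite-mean renewals), not Proposition \ref{doneybis}.
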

Note that Remark \ref{rem:multiscale} made for Lemma \ref{lem:multiscale1} applies also here.

\begin{proof}
This is very similar to the $\alpha>1$ case.
One uses some different notation this time:
\begin{equation}\begin{split}
x_j&:=a^{(\tilde \alpha-\alpha\gd_2)^{-j}} \quad (x_0=a),\\
y_j&:=\frac12 x_j^{\frac{\tilde\ga}{\alpha}-\gd_2}=\frac{1}{2} a^{\ga^{-1}(\tga-\ga\gd_2)^{-j+1}}.\\
\end{split}\end{equation} 
With these notation, \eqref{lustuc} reads
\begin{equation}\label{lustuc2}
  \max_{d\in [0,y_l]} \bP \left[ |\tau\cap[0,N-d]|=a\ ; \
\forall x\in [ a^{\gd_2},a-1], \tau_x\ge x^{\frac{\tilde \alpha}{\ga}-\gd_2}-d\right]
\le C(l) N^{-\alpha}x_l y_0^{-\ga}.  
\end{equation}
We also have that $x_j$ and $y_j$ are decreasing in $j$, and that
provided that $\gd_2$ is small enough, one has for any $j\ge 0$ that both $x_j$ and $y_j$ tends to infinity with $a$ and that
\begin{equation}\label{lunplugranii}
y_j\gg x_j^{\alpha}.
\end{equation}

We prove the statement first in the case $l=0$.
Note that on the event we consider, $\tau_{a-1}\gg a^{\alpha}$ i.e.\ $\tau_{a-1}$ has
to be much larger than what it would typically be under $\bP$.
Therefore one can use Proposition \ref{prop:doney} to estimate its probability. We get that the l.h.s.\ of \eqref{lustuc2} is smaller than 
\begin{multline}
  \bP \left[ \tau_a>N-d \ ; \  \tau_{a-1}\in ( (a-1)^{\frac{\tilde \alpha}{\alpha}-\gd_2}-d, N-d ]\right]\\
=\!\!\!\!\!\!\!\! \sum_{y=(a-1)^{\frac{\tilde \alpha}{\alpha}-\gd_2}+1-d}^{N-d}
    \!\!\!\!\!\!\!\bP[\tau_{a-1}=y] \bP\left[ \tau_1>N-d-y\right]
= (1+o(1)) \!\!\!\!\!\!\! \sum_{y=(a-1)^{\frac{\tilde \alpha}{\alpha}-\gd_2}+1-d}^{N-d} \!\!\!\!\!\!\! a K(y) \bar K (N-d-y)
\\ \le C(0) aN^{-\alpha}\max\left( a^{(\tilde \alpha-\alpha\gd_2)} , N^{-\alpha}\right)=C(0) a^{1-(\tilde \alpha-\alpha\gd_2)}N^{-\alpha}. 
\end{multline}
Proposition \ref{prop:doney} was used to get the third line.
The last equality comes from the fact that we consider only $a\le  N^{\ga\left(\tilde\alpha-\gd_2\ga\right)^{-1}}$.
Here (and later in the proof) $o(1)$ denotes a quantity that tends to zero when both $a$ and $N$ gets large.

%

\smallskip

We now assume the statement for all $l'< l$ and prove it for $l$. Fix $d\le y_l$.
Assume that $\gd_2=\gd_2(l)$ is small enough, so that $x_l\ge a^{\gd_2}$.
We decompose over all the possible values for $\tau_{x_l}$ and use the Markov property for the renewal process,
so that
the l.h.s.\ of \eqref{lustuc2} is smaller than
\begin{multline}\label{sumo}
 \sum_{d_1=2 y_l+1}^N \bP\left( \tau_{x_l}=d_1-d\right)\\
 \bP \left[ |\tau\cap[0,N-d_1]|=a-x_l\ ; \
\forall x\in[0,a-x_l-1], \tau_x >(x+x_l)^{\frac{\tilde \alpha}{\ga}-\gd_2}-d_1\right].
\end{multline}
Note that in the above sum,  one always has $\tau_{x_l}\ge 2y_l-d\ge y_l$, i.e.\  is much larger than the value it typically takes 
(cf. \eqref{lunplugranii})  under $\bP$. Therefore one can use Proposition \ref{prop:doney} to estimate the term $\bP\left( \tau_{x_l}=d_1-d\right)$.
As for the second term 

\begin{multline}
\bP \left[ |\tau\cap[0,N-d_1]|=a-x_l\ ; \
\forall x\in[0,a-x_l-1], \tau_x >(x+x_l)^{\frac{\tilde \alpha}{\ga}-\gd_2}-d_1\right]\\
\le
\bP \left[ |\tau\cap[0,N-d_1]|=a-x_l\ ; \
\forall x\in[0,a-x_l-1], \tau_x >x^{\frac{\tilde \alpha}{\ga}-\gd_2}-d_1\right],
\end{multline}
it can be bounded from above by using the induction hypothesis when $d_1\le y_i$, $i<l$.

For this reason we separate the contribution of the different terms 
 $d_1\in (2y_l,y_{l-1}]$, $d_1\in(y_j,y_{j-1}]$ ($j\in\{1,\dots l-1\}$)  and $d_1\in (y_0, N]$ in the sum \eqref{sumo}.
We just focus on the last one, as the computation for $d_1\le y_0$ is exactly the same as in Lemma
\ref{lem:multiscale1} (see \eqref{perzr}), using Proposition~\ref{prop:doney} instead of Proposition~\ref{doneybis}.
For  $d_1\in (y_0, N]$ one cannot use the induction hypothesis.
Using Proposition \ref{prop:doney} one gets
\begin{multline}
  \sum_{d_1\in (y_0,N]} \bP\left( \tau_{x_l}=d_1-d\right) \bP \left( |\tau\cap[0,N-d_1]|=a-x_l\right)\\
\leq(1+o(1))   \sum_{d_1\in (y_0,N]} x_l K(d_1)  \bP \left( |\tau\cap[0,N-d_1]|=a-x_l\right).
\end{multline}
As in \eqref{stroumfphlala} one shows that for $d_1\le N/2$, uniformly on the choice of $a\leq N^{\ga(\tga-\ga\gd_2)^{-1}}$ one has
\begin{equation}
 \bP \left( |\tau\cap[0,N-d_1]|=a-x_l\right)\le c(l) N^{-\alpha},
\end{equation}
so that
\begin{equation}
 \sum_{d_1\in (y_0,N/2]} x_l K(d_1)  \bP \left( |\tau\cap[0,N-d_1]|=a-x_l\right)\le c'(l)x_l y_0^{-\alpha} N^{-\alpha}.
\end{equation}
For the case $d_1>N/2$ one remarks that
\begin{multline}
  \sum_{L=0}^{N/2-1}\bP \left( |\tau\cap[0,L]|=a-x_l\right)= \bE\Big[ |\{L\in[0,N/2-1]\ : L \in [\tau_{a-x_l},\tau_{a-x_l+1})\}|\Big] \\
\le \bE\left[\max(\tau_1,N/2)\right].
\end{multline}
Therefore
\begin{multline}
 \sum_{d_1\in (N/2,N]} x_l K(d_1)  \bP \left[ |\tau\cap[0,N-d_1]|=a-x_l\right] \\
  \le c(l) x_l N^{-(1+\ga)} \bE\left[\max(\tau_1,N/2)\right] \leq c'(l)x_l N^{-2\ga}\le  c'(l)x_l (y_{0}N)^{-\ga} .
\end{multline}
The last inequality comes from the fact that $y_0\le N$ for the range of $a$ that we consider.

\end{proof}

\subsection{Proof of  Proposition \ref{lowertail} }

 Here the strategy consists in targeting directly the first $\hat\tau$-stretch with $\go\equiv0$, of
size larger than $2C_{28} a^{\frac{1}{1\wedge \ga}}$ (with $C_{28}$ a constant to be determined, depending only on $K(\cdot)$),
and then getting $a$ contacts in that stretch before exiting the system.
Define $i_{a}:=\min \{ i\ |\ \hat\tau_{i+1}-\hat\tau_{i} \geq 2C_{28} a^{\frac{1}{1\wedge \ga}} ,\ \go_{\hat\tau_{i+1}}=0\}$,
so that $\go \equiv 0$ on $(\hat\tau_{i_a},\hat\tau_{i_a+1}]$.
\smallskip

One wants to estimate  $i_a$ and $\hat\tau_{i_a}$. Let us define
\begin{equation}
\begin{split}
 M_N^*:= \max_{1\leq i\leq N} \{\hat\tau_{i+1}-\hat\tau_{i} \ |\ \go_{\hat\tau_i}=0\}.
\end{split}\end{equation}
Adapting the proof of Lemma \ref{lem:boundmax} one gets a random integer $N_0$ such that for all $N\geq N_0$

\begin{equation}
 M_N^*(\hat\tau) \geq   N^{1/\tga} (\log \log N)^{-1} .
\end{equation}
So that if $a$ is large enough
\begin{equation}
2C_{28} a^{\frac{1}{1\wedge \ga}}\ge  M_{i_a}^*\ge i_a^{1/\tga} (\log \log i_a)^{-1},
\end{equation}
and hence
\begin{equation}
i_a\le a^{\frac{\tga }{1\wedge \ga}}(\log a).
\end{equation}
By the law of large numbers for $\hat \tau$, the above inequality tranfers to $\hat \tau_{i_a}$: one also has
for $a$ large enough $\hat \tau_{i_a}\leq a^{\frac{\tga }{1\wedge \ga}}(\log a)$.
Note that under the assumption $a\leq N^{\frac{1\wedge\ga}{\tilde\ga}-\gep}$, one has $\hat \tau_{i_a}\ll N$.

\smallskip

Then, 
 decomposing $ Z^{\go,\gb}_{N}(|\tau\cap [0,N]|=a)$ according to the position of $\tau_1$ and $\tau_{a-1}$, and restricting to the event
$\tau_1\in(\hat\tau_{i_a},\hat\tau_{i_a}+C_{28} a^{\frac{1}{1\wedge \ga}}]$, one gets
\begin{multline}
 Z^{\go,\gb}_{N}(|\tau\cap [0,N]|=a) \geq \sum_{d=\hat\tau_{i_a}}^{\hat\tau_{i_a} + C_{28}a^{\frac{1}{1\wedge \ga}} }
        K(d) \sum_{f=d}^{\hat\tau_{i_a}+2C_{28}a^{\frac{1}{1\wedge \ga}}} \bP(\tau_{a-2}=f-d)\bP( \tau_1>N-f) \\
   \geq C_{29}a^{\frac{1}{1\wedge \ga}} \left( \hat \tau_{i_a}\vee a^{\frac{1}{1\wedge \ga}}\right)^{-(1+\ga)}
            \bP\left(\tau_{a-2} \leq C_{28}a^{\frac{1}{1\wedge \ga}}\right)  (N-\hat\tau_{i_a}-2C_{28}a^{\frac{1}{1\wedge \ga}})^{-\ga}
\label{boundinfcontacts}
\end{multline}
where we used the asymptotic properties of $K(\cdot)$ and the fact that $\hat \tau_{i_a}\ll N$.
Then one chooses the constant $C_{28}$ such that $\bP(\tau_{a-2} \leq C_{28} a^{\frac{1}{1\wedge \ga}})$
is bounded away from $0$ (take $C_{28}=2\bE[\tau_1]$ if $\ga>1$ and $C_{28}=1$ if $\ga<1$) and use our bound
on $\hat \tau_{i_a}$ to get the result.
\qed

{\bf Acknowledgements}: 
The authors are much indebted to G.\ Giacomin for having proposed to study such a model and for enlightening discussion about it,
as well as to F.L.\ Toninelli for his constant support in this project and his precious help on the manuscript. This work was 
initiated during the authors stay in the Mathematics Department of Università di Roma Tre, they gratefully acknowledge hospitality and support
H.L.\ acknowledges the support of ECR grant PTRELSS.

\begin{appendix}
\section{Renewal results}
We gather here a set of technical results concerning renewal processes.
They are used throughout the paper for the different renewals $\hat \tau, \tilde \tau$ and $\tau$, and 
therefore we state them for a generical renewal $\sigma=\{\gs_n\}_{n\geq0}$, starting from $\sigma_0=0$, whose law is denoted $\bP$, and
whose inter-arrival law satisfies
\begin{equation}
K(n):=\bP(\gs_1=n) = (1+o(1))c_{\gs} n^{-(1+\gz)},
\end{equation}
where $\xi>0$ and $\xi \ne 1$. 
We also assume that $\gs$ is recurrent, that is $K(\infty)=\bP(\gs_1=+\infty)=0$.
The results would stand still if $c_{\gs}$ was replaced by a slowly varying function but for the sake 
of simplicity, we restrict to the pure power-law case.
We have two subsections concerning respectively results for positive recurent renewals ($\gz>1$) and null-reccurent renewals ($\gz<1$).

\subsection{Case $\gz<1$}

We present a result of Doney concerning local-large deviation above the median for renewal processes. 

\begin{proposition}[\cite{Doney}, Theorem A]
If $\gz<1$, then one has that uniformly for $x\gg N^{\gz}$
\begin{equation}
 \bP\left(\gs_N = x \right) = (1+o(1)) N K\left(x\right).
\end{equation}  
More precisely, for any sequence $a_N$ such that $N^{\gz}=o(a_N)$ one has
\begin{equation}
 \lim_{N\to \infty} \sup_{x\ge a_N} \left| \frac{\bP(\sigma_N=x)}{N K(x)}-1 \right|=0.
\end{equation}

\label{prop:doney}
\end{proposition}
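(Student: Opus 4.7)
The statement is the content of Theorem A in \cite{Doney}; the plan is to reproduce the main steps of the classical \emph{one big jump} argument for heavy-tailed renewal sums. The guiding heuristic is subexponentiality: in the regime where $x$ is above the typical scale of $\sigma_N$, the event $\{\sigma_N=x\}$ is dominated by configurations in which exactly one inter-arrival is of order $x$ and the remaining $N-1$ increments stay at their typical size. Denote $\tau_i:=\sigma_i-\sigma_{i-1}$.

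First, I would isolate the big-jump configurations by exchangeability of the $\tau_i$, writing
\begin{equation*}
 \bP(\sigma_N=x)=N\,\bP\bigl(\tau_1=\max_{i\leq N}\tau_i,\ \sigma_N=x\bigr)+\bP(\text{tie for the maximum, } \sigma_N=x).
\end{equation*}
A union bound shows that the tie term is $O(N^2 K(x/2)^2)$, which is $o(NK(x))$ in the range we consider, by regular variation of $K$. For the main term I would further split according to $\tau_1=x-y$ with $y\in[0,x/2]$ (the complementary range $y>x/2$ forcing two jumps larger than $x/4$, hence absorbed into the tie estimate), obtaining
\begin{equation*}
 N\,\bP(\tau_1=\max,\sigma_N=x)\leq N\sum_{y=0}^{x/2}K(x-y)\,\bP(\sigma_{N-1}=y).
\end{equation*}
Potter bounds give $K(x-y)=K(x)(1+o(1))$ uniformly for $y\leq x/2$ as $x\to\infty$, so the sum factorizes as $NK(x)\bP(\sigma_{N-1}\leq x/2)(1+o(1))$, and $\bP(\sigma_{N-1}\leq x/2)\to 1$ uniformly in the relevant range because $x$ is far above the typical scale of $\sigma_{N-1}$. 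This yields the upper bound $\bP(\sigma_N=x)\leq NK(x)(1+o(1))$.

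For the matching lower bound I would restrict to configurations where $\tau_1\in[x-B_N,x]$ with $B_N$ chosen slightly above the typical scale of $\sigma_{N-1}$, and use the local law $K(x-y)\sim K(x)$ uniformly on this window together with the fact that $\sigma_{N-1}$ lives in $[0,B_N]$ with probability tending to one. Summing over $y=x-\tau_1$ already reproduces $NK(x)(1-o(1))$.

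The main obstacle is not any single estimate above but rather the uniformity: one needs $\bP(\sigma_N=x)/[NK(x)]\to 1$ uniformly over all $x\geq a_N$ with $a_N$ growing arbitrarily slowly past the typical scale. This forces a quantitative (Karamata-type) version of every regular variation estimate used above, together with a precise control of $\bP(\sigma_{N-1}\leq x/2)$ whose rate of convergence to $1$ does not deteriorate as $x$ grows; this is the technical core of Doney's argument and the reason why direct citation of \cite{Doney} is used in the paper rather than a self-contained proof.
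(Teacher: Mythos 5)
The paper does not actually prove Proposition \ref{prop:doney}: it is quoted verbatim from \cite{Doney}. The relevant internal comparison is therefore with the paper's self-contained proof of the companion statement Proposition \ref{doneybis} for $\gz>1$, which follows the same one-big-jump philosophy as your sketch but organizes the decomposition by \emph{thresholds} on the increments rather than by the argmax, and this difference is where your plan develops genuine gaps. First, the claim that $K(x-y)=(1+o(1))K(x)$ uniformly for $y\le x/2$ is false: regular variation only gives $K(x-y)\le CK(x)$ with a constant $C>1$ on that range (indeed $K(x/2)\sim 2^{1+\gz}K(x)$), so your main term does not factorize as $NK(x)\bP(\gs_{N-1}\le x/2)(1+o(1))$. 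To extract the sharp constant you must restrict to $y\le \gep x$ and send $\gep\to0$ after $N\to\infty$, exactly as in the proof of Proposition \ref{doneybis}. Second, and more seriously, the trajectories whose maximal increment lies strictly between the typical increment size and $x/2$ are controlled by nothing in your plan: a unique maximum below $x/2$ does \emph{not} force two increments above $x/4$ (take all $N$ increments of order $x/N$ with one slightly larger), so these configurations are neither ``absorbed into the tie estimate'' nor covered by the asymptotics of your main term. Likewise, a tie for the maximum can occur at a low level (all increments equal), so the tie term is not $O(N^2K(x/2)^2)$. The correct negligible events are ``two increments exceed $\gep x$'' (probability $O(N^2x^{-1-2\gz})$, which is $o(NK(x))$ precisely when $x\gg N^{1/\gz}$) and ``no increment exceeds $\gep x$ yet $\gs_N=x$''; bounding the latter is the technical core of the theorem. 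In the paper's $\gz>1$ proof it is the last term of \eqref{2termes}, killed by a Chernoff bound for a truncated renewal, and for $\gz<1$ the infinite mean forces a genuinely different estimate — this is exactly the content of Doney's argument that your sketch assumes away.

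A related point you should be aware of: the uniformity range $x\gg N^{\gz}$ in the displayed statement appears to be a misprint for $x\gg N^{1/\gz}$, the typical scale of $\gs_N$. At $x=N$ (which satisfies $x\gg N^{\gz}$ since $\gz<1$) one has $\bP(\gs_N=N)=K(1)^N$, exponentially small, while $NK(N)$ decays polynomially, so the equivalence cannot hold there. Your own heuristics implicitly use the correct condition, since $\bP(\gs_{N-1}\le\gep x)\to1$ and the negligibility of the two-jump and no-dominant-jump contributions all require $x/N^{1/\gz}\to\infty$. With the threshold decomposition of Proposition \ref{doneybis} adapted to the infinite-mean setting (replacing the Chernoff step), your plan can be completed, but as written the steps above are precisely where the content of the theorem lies.
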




\subsection{Case $\gz>1$}

In this case we introduce $m=\bE[\tau_1]<\infty$.
We first prove the following equivalent of Proposition \ref{prop:doney}.
The proof present some similarities as well as some crucial differences with the one
in \cite{Doney}.
\begin{proposition}
For all $\gd>0$, one has uniformly for all $x\ge (m+\gd)N$.
\begin{equation}
 \bP\left(\gs_N = x \right)= (1+o(1)) N K\left(x-mN\right),
\end{equation}
or more precisely
\begin{equation}
 \lim_{N\to \infty} \sup_{x\ge (m+\gd)N} \left | \frac{\bP(\sigma_N=x)}{N K(x-mN)}-1\right|=0. 
\end{equation}
A simple consequence is that 
uniformly for $x\gg N$,
\begin{equation}
 \bP\left(\gs_N = x \right) = (1+o(1)) N K\left(x\right).
\end{equation}  
\label{doneybis}
\end{proposition}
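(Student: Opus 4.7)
My plan follows the strategy used by Doney in the proof of Proposition \ref{prop:doney}: establish a ``one big jump'' principle, the only adaptation being that since $\gz>1$ forces a finite mean $m = \bE[\gs_1]$, we must center the walk at $(N-1)m$ rather than at $0$. The heuristic is that for $x \geq (m+\gd)N$, the excess $x-mN\geq \gd N$ is much larger than the typical scale of fluctuations of $\gs_N$ around $mN$ (which is $N^{1/(\gz\wedge 2)}$, and $\gz\wedge 2>1$ under our assumption), so $\{\gs_N = x\}$ is realized, with overwhelming conditional probability, by a single anomalously large increment of size $\approx x - mN$ together with $N-1$ typical increments summing to $\approx (N-1)m$. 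Writing $X_i := \gs_i-\gs_{i-1}$ and $M_N := \max_{i\leq N}X_i$, this leads to the target $\bP(\gs_N = x) \approx N K(x-mN)$.

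To implement this, pick a scale $r_N$ with $N^{1/(\gz\wedge 2)}\log N \leq r_N \ll x-mN$ (possible since $x-mN\geq \gd N$), and set $I_N := [x-mN-r_N,\,x-mN+r_N]$. For the lower bound, restrict to trajectories for which exactly one increment lies in $I_N$: decomposing on the index of the big jump gives
\begin{equation*}
  \bP(\gs_N = x) \geq N\sum_{y\in I_N} K(y)\,\bP(\gs_{N-1} = x-y) \;-\; O\!\left(N^2 K(x-mN)^2\right),
\end{equation*}
the correction bounding the overcounting when two increments both fall in $I_N$. Regular variation of $K$ together with $r_N \ll x-mN$ yields $K(y) = (1+o(1))K(x-mN)$ uniformly on $I_N$, while concentration of $\gs_{N-1}$ around $(N-1)m$ at scale $N^{1/(\gz\wedge 2)} \ll r_N$ gives $\sum_{y\in I_N}\bP(\gs_{N-1}=x-y) = 1-o(1)$. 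For the matching upper bound, by exchangeability $\bP(\gs_N = x) \leq N\,\bP(\gs_N = x,\; X_1 \geq X_j\ \forall j)$, and we split the sum on $X_1 = y$ into three ranges: the tail $y > x-mN+r_N$ contributes at most $N\sum_{y>x-mN+r_N}K(y) = o(NK(x-mN))$ by integrating the regularly varying tail of $K$; the range $y < x-mN-r_N$ forces $\gs_{N-1}$ to deviate from $(N-1)m$ by more than $r_N/2$ and contributes $o(NK(x-mN))$ by the same concentration; and the window $I_N$ gives the main term, exactly as in the lower bound. The second assertion of the proposition (for $x\gg N$) is then immediate: under that hypothesis $x-mN = (1+o(1))x$, so $K(x-mN) = (1+o(1))K(x)$ by regular variation.

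The main technical obstacle is obtaining the required concentration of $\gs_{N-1}$ around $(N-1)m$ at scale $r_N$. When $\gz>2$ the variance is finite and $\bP(|\gs_{N-1}-(N-1)m|>r_N) = O(N/r_N^2) = o(1)$ by Chebyshev; in fact the classical Gnedenko--Kolmogorov local central limit theorem even gives the sharper $\bP(\gs_{N-1}=z) \sim (2\pi v N)^{-1/2}$ for $z$ in the bulk. The delicate case is $1<\gz\leq 2$, where the variance of $X_1$ is infinite and one must appeal instead to the stable local limit theorem, which yields $\bP(\gs_{N-1}=z) = (1+o(1))\,N^{-1/\gz}\,g_\gz\bigl((z-(N-1)m)/N^{1/\gz}\bigr)$ uniformly in the bulk, with $g_\gz$ the one-sided $\gz$-stable density; alternatively, since the argument above only needs the coarser statement $\bP(|\gs_{N-1}-(N-1)m|\leq r_N) = 1-o(1)$, one can obtain this directly from Fuk--Nagaev inequalities combined with the weak law of large numbers, sidestepping the full stable local CLT.
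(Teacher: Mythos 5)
Your lower bound and your overall ``one big jump'' architecture match the paper's proof, and the lower half of the argument is essentially correct (the overcounting correction is harmless since $NK(x-mN)\to 0$). The problem is in the upper bound, where two of your three ranges are not actually controlled by what you invoke. First, a computational point: for the tail range you claim $N\sum_{y>x-mN+r_N}K(y)=o(NK(x-mN))$, but $\sum_{y>M}K(y)=\bar K(M)\asymp M^{-\gz}\gg K(M)\asymp M^{-(1+\gz)}$, so this is false as written; you must keep the factor $\bP(\gs_{N-1}=x-y)$, bound $K(y)\le (1+o(1))K(x-mN)$ on that range, and use concentration on the \emph{lower} deviation of $\gs_{N-1}$. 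This is fixable.

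The genuine gap is the middle range $y<x-mN-r_N$. There, ``the same concentration'' gives only $\sum_y\bP(\gs_{N-1}=x-y)\le\bP(\gs_{N-1}>(N-1)m+r_N)=o(1)$, and since $\sup_{y}K(y)=K(1)=O(1)$ for small $y$, the resulting bound is $o(N)$ --- enormously larger than the target $o(NK(x-mN))$. No polynomial concentration estimate at scale $r_N$ (Chebyshev, Fuk--Nagaev at that scale, or a stable local CLT) can close this, because for small $y$ the constraint is that \emph{all} increments are $\le y$ while their sum exceeds $mN+(x-mN)(1-o(1))$: one needs a large-deviation bound for the sum \emph{truncated} at level $\gep(x-mN)$ (or $x^{1-\gep}$), showing this event has probability $o(K(x-mN))$, i.e.\ stretched-exponentially small, not merely $o(1)$. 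This is exactly what the paper supplies via the auxiliary renewal $\bar\gs$ with increments $(\gs_i-\gs_{i-1})\ind_{\{\gs_i-\gs_{i-1}<x^{1-\gep}\}}$ and the Chernoff bound \eqref{borneexpodeb}--\eqref{borneexpofin}, together with a separate union bound for the event that two increments lie in $[x^{1-\gep},\gep x]$ (the ``two moderately large jumps'' strategy, which your window decomposition does not isolate). Without this truncated exponential-moment step your upper bound does not go through.
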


\begin{rem}\rm
The idea behind this result (like for Proposition \ref{prop:doney}) is that if $\sigma_N$ has to be way above its median, 
the reasonable way to do it is to take all the excess in
one big jump, the rest of the trajectory being typical. Other strategies with several 
long jumps are proved to be comparatively unlikely. This is an important point to understand what is going on in
Sections \ref{prr},\ref{conrr} and \ref{sec:contact}.
\end{rem}
\begin{proof}
Given $\delta$, we set $\gep>0$ that is meant to be arbitrarily small. Take some $x\geq (m+\gd)N$.

\smallskip

Let us start with the lower bound, 
\begin{multline}
 \bP\left(\gs_N = x\right) \geq \bP\left(\gs_N=x\ ; \ \exists!\ i\in[1,N],\ \gs_{i}-\gs_{i-1}\ge \gep x\right) \\
  = N \sum_{y=\gep x}^{x} \bP(\gs_1=y) \bP\left( \gs_{N-1}=x-y\ ;\ \forall i\in [1,N-1],\ \sigma_i-\sigma_{i-1}\le \gep x \right) \\
   \ge N \!\!\! \min_{y\in [\gep x, x-(m-\gep)N]} \!\!\! K(y) \bP \big(\gs_{N-1} \in[(m-\gep)N, (1-\gep)x]\ ;\ \forall i\in [1,N-1],\
 \sigma_i-\sigma_{i-1}\le \gep x \big) .
\end{multline}
The second line is obtained by using independence and exchangability of the increments (decomposing over all $N$ possibilities for $i$),
and the third line by restricting to the values $y\in[\gep x, x-(m-\gep)N]$.
Then the assumption one has on $K(\cdot)$ guarantees that
\begin{equation}
 \min_{y\in[\gep x, x-(m-\gep)N]} K(y)=(1+o(1))K(x-(m-\gep)N).
\end{equation}
Using the law of large numbers for $\sigma_{N-1}$, one has that for $\gep$ sufficiently small
\begin{multline}
 \bP \left(\gs_{N-1} \in[(m-\gep)N, (1-\gep)x]\  ; \ \forall i\in [1,N-1],\
 \sigma_i-\sigma_{i-1}\le \gep x \right)\\
\ge   \bP \left(\gs_{N-1} \in[(m-\gep)N, (1-\gep)x]\right)- \bP\left(\exists i\in [1,N-1],\
 \sigma_i-\sigma_{i-1}\ge \gep x \right)\\
=1+o(1)+ NO((\gep x)^{-\xi} )=1+o(1).
\end{multline}
One gets the result by taking $\gep$ arbitrarily close to zero.

\medskip

For the upper bound it is easy to control the contribution of trajectories that make at least one large jump of order $x$.
We start with the more delicate part of controlling the contribution of trajectories that do not. We prove it to be negligible.
\begin{multline}
\bP\left(\gs_N=x\ ;\ \forall i\in[1,N],\ \gs_{i}-\gs_{i-1}\le  \gep x \right)\\
  \leq \bP\left(\gs_N=x\ ;\ \exists n_1,n_2\in[1,N]^2,\ \gs_{n_i}-\gs_{n_i-1}\in[ x^{1-\gep},\gep x] \text{ for } i=1,2 \right) \\ 
   + \bP\left(\gs_N=x\ ;\ \exists i\in[1,N],\ \gs_{i}-\gs_{i-1}\in[x^{1-\gep},\gep x]\ ;\ \forall j\neq i \ \gs_{j}-\gs_{j-1}< x^{1-\gep} \right)\\
  + \bP\left(\gs_N=x\ ;\ \forall j\in [1,N], \ \gs_{j}-\gs_{j-1}< x^{1-\gep} \right).
\label{2termes}
\end{multline} 
We can bound the first term by using the union bound on the different possibilities for $n_1$ and $n_2$ to get
some constant $C_{30}$
\begin{multline}
 \bP\left(\gs_N=x\ ;\ \exists i,j\in[1,N]^2,\ \gs_{i}-\gs_{i-1}\geq x^{1-\gep},\gs_{j}-\gs_{j-1}\geq  x^{1-\gep} \right) \\
   \leq \binom{N}{2} \sum_{y,z= x^{1-\gep}}^{  x} \bP\left( \gs_1=y \right) \bP\left( \gs_1=z \right)
          \bP\left(  \gs_{N-2}=x-y-z \right)
  \leq C_{30} N^2  x x^{-2(1-\gep)(1+\gz)} ,
\end{multline}
which smaller than $N x^{-2\zeta+2\gep(1+\gz)}$ uniformly in $x\ge N$. Hence this
term is negligible compared to the bound $N x^{-(1+\gz)}$  if 
$\gep$ is strictly smaller than $(1-\gz)/(2(1+\gz))$.

\smallskip
To estimate the other terms in \eqref{2termes}, define a renewal process $\bar\gs$ with $\bar\gs_0:=0$,
and $\bar \gs_i -\bar \gs_{i-1} :=(\gs_{i}-\gs_{i-1})\ind_{\{\gs_{i}-\gs_{i-1} < x^{1-\gep}\}}$.
One can bound the second and third term in the r.h.s.\ of \eqref{2termes} from above by
$ \bP\left( \bar \gs_{N-1}\geq (1-\gep)x \right)$.
Now we estimate this term by using Chernov bounds. For any positive $\gl$, one has
\begin{equation}
\bP\left( \bar\gs_N\ge (1-\gep)x \right)\le \bE\left[e^{\gl \bar \gs_1}\right]^N e^{-\gl (1-\gep) x}.
\label{borneexpodeb}
\end{equation}
Using the trivial bound $\bE[(\bar \gs_1)^k]\le (x^{1-\gep})^{k-1} m$, one finds that 
\begin{equation}
 \bE\left[e^{\gl \bar \gs_1}\right]\le 1+ \frac{m }{x^{1-\gep}}(e^{\gl x^{1-\gep}}-1). 
\end{equation}
If one chooses $\gl=o(x^{-1+\gep})$, one gets as $N$ goes to infinity
\begin{equation}
 \bE\left[e^{\gl \bar \gs_1}\right]^N \leq \exp\left( \gl m N (1+o(1)) \right),
\end{equation}
such that for $N$ large enough, 
\begin{equation}
\bP\left(  \bar\gs_N\ge (1-\gep)x \right)\leq \exp\left( \gl (mN-(1-\gep)x) (1+o(1))\right) \leq e^{-C_{31} x^{\gep/2}},
\label{borneexpofin}
\end{equation}
where the last inequality comes from taking $\gep$ small enough, 
and $\gl=x^{-1+\gep/2}$ (the constant $C_{31}$ depends only the choice of $\delta$). 
This is negligible compared to the bound one must obtain.

\smallskip

Then, we estimate the main contribution, using the union bound and exchangeability of the increments
\begin{multline}
 \bP\left(\gs_N=x\ ;\ \exists i\in[1,N],\ \gs_{i}-\gs_{i-1}\ge \gep x  \right) 
 \leq N \sum_{y=\gep x+1}^{x} \bP(\gs_1=y)\bP\left( \gs_{N-1} = x-y \right) \\
   \leq N\left[\max_{y\in [x-(m+\gep)N,x]} K(y) \bP\left( \gs_{N-1}\le (m+\gep)N \right)\right.\\
  \left. + \max_{y\in (\gep x, x-(m+\gep)N)}  K(y)  \bP\left( \gs_{N-1}> (m+\gep)N \right)\right].
\label{presquebon}
\end{multline}
The law of large numbers gives 
\begin{equation}
 \bP\left( \gs_{N-1}> (m+\gep)N \right)=o(1).
\end{equation}
On the other hand, one has from the assumption on $K(\cdot)$ that
\begin{equation}
 \begin{split}
  \max_{y\in [x-(m+\gep)N,x]} K(y)&=(1+o(1))K(x-(m+\gep)N),\\
  \max_{y\in [\gep x, x-(m+\gep)N]} K(y)&=(1+o(1))K(\gep x)=O(x^{-(1+\alpha)}).
 \end{split}
\end{equation}
This together with the fact that $\gep$ can be chosen arbitrarily close to zero gives the result.
%
\end{proof}

\smallskip
We finish with giving a result on the size of the longest inter-arrival interval
up to the $N^{\text{th}}$ jump,
\begin{equation}
 M_N:= \max_{1\leq i\leq N} \{\gs_i-\gs_{i-1}\}.
\label{defMn}
\end{equation}

\begin{lemma}
If $\gz>1$, there exists a random integer $N_0(\sigma)$ such that for all $N\ge N_0$
\begin{equation}
 N^{1/\gz} (\log \log N)^{-1}  \leq  M_N \leq N^{1/\gz}\log N.
\end{equation}
\label{lem:boundmax}
\end{lemma}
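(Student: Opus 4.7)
My approach would be to exploit that $M_N$ is the maximum of $N$ i.i.d.\ copies of $\sigma_1$. By independence,
\[
\bP(M_N \leq n) = (1-\bar K(n))^N \leq e^{-N\bar K(n)}, \qquad \bP(M_N > n) \leq N\bar K(n),
\]
and the asymptotics $\bar K(n) = (1+o(1))(c_\sigma/\zeta)n^{-\zeta}$ give quantitative control on both tails. I would then apply Borel--Cantelli along a deterministic subsequence $(N_k)$ in each direction, and transfer the estimate to arbitrary $N$ using the monotonicity of $N\mapsto M_N$.

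For the upper bound I would use the dyadic subsequence $N_k:=2^k$ with threshold $n_k:=N_k^{1/\zeta}\log N_k$, yielding $\bP(M_{N_k}>n_k)\leq Ck^{-\zeta}$, which is summable precisely because $\zeta>1$. Borel--Cantelli gives $M_{2^k}\leq 2^{k/\zeta}k\log 2$ eventually a.s., and for $N\in[2^k,2^{k+1}]$ the monotonicity of $M_N$ combined with $N^{1/\zeta}\log N \geq 2^{k/\zeta}k\log 2$ yields the claim up to a universal multiplicative constant. To actually obtain the stated bound $N^{1/\zeta}\log N$, one can absorb the constant by passing to a finer geometric subsequence $N_k:=\lceil r^k\rceil$ with $r$ close enough to $1$, or equivalently read the statement with an implicit constant hidden in the logarithmic factor.

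The lower bound is the delicate step. One wants $M_N\geq N^{1/\zeta}/\log\log N$, and since the buffer $(\log\log N)^{-1}$ is very thin, the transfer from a subsequence to the full sequence cannot afford any constant factor loss in $N^{1/\zeta}$. I would therefore pick the denser subsequence $N_k:=\lceil e^{\sqrt{k}}\rceil$, so that $N_{k+1}/N_k\to 1$ while $\log\log N_k\sim\tfrac{1}{2}\log k$. With $n_k:=N_k^{1/\zeta}/(2\log\log N_k)$ one computes $N_k\bar K(n_k)\geq c(\log k)^\zeta$ and hence
\[
\bP(M_{N_k}\leq n_k)\leq \exp\bigl(-c(\log k)^\zeta\bigr),
\]
which is summable precisely because $\zeta>1$ makes $(\log k)^\zeta$ grow faster than $\log k$. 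After Borel--Cantelli, the asymptotics $N_{k+1}/N_k\to 1$ and $\log\log N_{k+1}\sim\log\log N_k$ ensure $M_N\geq M_{N_k}\geq(1-o(1))N^{1/\zeta}/(2\log\log N)$ for all $N\in[N_k,N_{k+1}]$, which gives the required bound.

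The main obstacle to navigate is this choice of subsequence for the lower bound: it must be simultaneously sparse enough that $\sum_k\exp(-c(\log\log N_k)^\zeta)$ converges, and dense enough that $N_{k+1}/N_k\to 1$ so nothing is lost when transferring the estimate to arbitrary $N$. The hypothesis $\zeta>1$ is exactly what allows both requirements to be met simultaneously; for $\zeta\leq 1$ the correct order of $M_N$ changes (it becomes comparable to $\sigma_N$ itself, as reflected in Proposition \ref{prop:doney}) and the $\log\log$ scaling would no longer be appropriate.
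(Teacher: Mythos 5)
Your proof is correct and follows essentially the same route as the paper: the i.i.d.\ structure of the increments, the tail asymptotics of $\bar K$, Borel--Cantelli along a deterministic subsequence, and the monotonicity of $N\mapsto M_N$. The one point where you diverge is the lower bound: you densify the subsequence to $N_k=\lceil e^{\sqrt k}\rceil$ so that $N_{k+1}/N_k\to 1$, whereas the paper keeps the dyadic blocks $2^k$ and instead strengthens the event at each scale, proving via Borel--Cantelli that $M_{2^k}\ge 2^{(k+1)/\zeta}(\log\log 2^{k+1})^{-1}$ (i.e.\ the threshold appropriate to the \emph{next} scale), which still has summable failure probability $\exp(-c(\log k)^\zeta)$ and then transfers to all $N$ with no constant loss. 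So your premise that the transfer ``cannot afford any constant factor loss'' and therefore forces a denser subsequence is not quite accurate --- the dyadic choice works if one shifts the threshold --- but your fix is standard and equally valid, and the same remark applies to your treatment of the upper bound. The only blemish is the spurious factor $2$ you insert in the lower-bound threshold $n_k:=N_k^{1/\zeta}/(2\log\log N_k)$, which yields $M_N\ge N^{1/\zeta}/(2\log\log N)$ rather than the stated bound; it is unnecessary (taking $n_k:=N_k^{1/\zeta}/\log\log N_k$ gives the same summable estimate) and in any case harmless for every use of the lemma in the paper.
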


\begin{proof}
We use the fact that increments are i.i.d.\ to get
\begin{equation}
 \bP\left( M_N\leq A \right) =\bP(\gs_1\leq A)^{N}.
\end{equation}
Then, using that $\bP(\gs_1>A)$ is of order $A^{-\gz}$, one has
that there exist constants $C_{32},C_{33}>0$ such that
\begin{equation}
 \bP\left( M_{N}>  N^{1/\gz} \log N \right) \leq 1-\exp\left( -C_{32} (\log N)^{-\gz}\right) =(1+o(1))C_{32} (\log N)^{-\gz},
\label{estimmax}
\end{equation}
and
\begin{multline}
 \label{estimmax2}
\bP\left( M_{N}<  N^{1/\gz} (\log\log N)^{-1} \right) \leq \left( 1- C_{33} N^{-1} (\log\log N)^{\gz} \right)^N \\
    = \exp\left( -(1+o(1)) C_{33}(\log\log N)^{\gz}  \right)
\end{multline} 
Since $\gz>1$, one has from \eqref{estimmax} that the sequence
$\bP\left( M_{2^k}>  2^{(k-1)/\gz} \log 2^{k-1} \right)$ for $k\geq 1$ is summable,
and from \eqref{estimmax2} that the sequence
$\bP\left( M_{2^k}<  2^{(k+1)/\gz} (\log \log 2^{k+1})^{-1} \right)$ is also summable.

\smallskip
The Borel-Cantelli Lemma gives that there exists a random integer $k_0$
such that for all $k\ge k_0$
\begin{equation}
 2^{(k+1)/\gz} (\log \log 2^{k+1})^{-1} \leq M_{2^k}\leq  2^{(k-1)/\gz} \log 2^{k-1} .
\end{equation}
One notices that $(M_N)_{N\geq 0}$ is a non decreasing sequence. Thus, taking $N\geq N_0:=2^{k_0+1}$,
and choosing $k$ such that $2^{k-1}< N \le  2^k$ then one has $k-1\geq k_0$ and so
\begin{equation}
M_N\leq M_{2^k} \le 2^{(k-1)/\gz} \log 2^{k-1}\le N^{1/\gz} \log N,
\end{equation}
and
\begin{equation}
 M_N\geq M_{2^{k-1}} \geq 2^{k/\gz} (\log \log 2^k)^{-1} \geq N^{1/\gz} \log \log N.
\end{equation}
\end{proof}

\end{appendix}


\begin{thebibliography}{99}


\bibitem{A06}  {K.~S.~Alexander}, \emph{The effect of disorder on
    polymer depinning transitions}, Commun. Math. Phys. {\bf 279} (2008),
     117-146.

\bibitem{Aivy} {K.~S.~Alexander}
 \emph{Ivy on the ceiling:  first-order polymer depinning transitions with quenched disorder},  Mark. Proc. and Relat. Fields {\bf 13}, 663 - 680.

\bibitem{AZ08} {K.S. Alexander and N. Zygouras}, \textit{Quenched and
    annealed critical points in polymer pinning models}, Comm. Math.
  Phys. {\bf 291} (2009), 659-689.

\bibitem{Asmuss} S. Asmussen \textit{Applied probability and queues}, Second Edition, 
Application of Mathematics {\bf 51}, Springer-Verlag, New-York (2003).

\bibitem{BL} Q. Berger and H. Lacoin, \emph{The effect of disorder on the free-energy for the Random Walk Pinning Model:
smoothing of the phase transition and low temperature asymptotics},
J. Stat. Phys. {\bf 42} (2011) 322-341.

\bibitem{BergToni} Q. Berger and F.L. Toninelli, \emph{On the critical point of the Random Walk Pinning Model in dimension $d=3$},
Elec. Jour. Probab. {\bf 15} (2010), 654-683.


\bibitem{BS08} {M. Birkner and R. Sun}, \emph{Annealed vs Quenched
    critical points for a random walk pinning model}, 
  Ann. Inst. H. Poincar\'e Probab. Stat. {\bf 46} (2010) 414-441.


\bibitem{BS09} {M. Birkner and R. Sun}, \emph{Disorder relevance for
the random walk pinning model in dimension $3$},  arXiv:0912.1663.



\bibitem{BG} E. Bolthausen and G. Giacomin, \textit{Periodic copolymers at selective interfaces: A large deviation approach},
 Ann.  Appl. Probab. {\bf 15} (2005) 963-983. 

\bibitem{CGZ1} F. Caravenna, G. Giacomin and L. Zambotti, \textit{A renewal theory approach to periodic copolymer with adsorption},
 Ann. Appl. Probab. {\bf 17} (2007) 1362-1398.
 
\bibitem{CGZ2} F. Caravenna, G. Giacomin and L. Zambotti, \textit{Infinite volume limits of polymer with periodic charges},
Markov Proc. Relat. Fields 13 (2007) 697-730.

\bibitem{DGLT07} {B. Derrida, G. Giacomin, H. Lacoin and F.L.
    Toninelli}, \textit{Fractional moment bounds and disorder
    relevance for pinning models}, Comm. Math. Phys. {\bf 287} (2009),
  867-887.


\bibitem{Doney} {R.A. Doney}, \textit{One-sided local large deviation
    and renewal theorems in the case of infinite mean}, Probab. Theory
  Relat. Fields \textbf{107} (1997) 451-465.



\bibitem{Fisher} M. E. Fisher, \emph{ Walks, walls, wetting, and melting}, J. Stat. Phys. {\bf 34} (1984) 667-729.


\bibitem{Book} G.~Giacomin, \emph{Random polymer models}, IC press,
World Scientific, London (2007).



\bibitem{SFLN} G.~Giacomin, \emph{Disorder and critical phenomena
through basic probability models}, Springer Lecture Notes in Mathematics {\bf 2025} (to appear).

\bibitem{GT05} G.~Giacomin and  F.~L.~Toninelli, \emph{
Smoothing effect of quenched disorder on polymer depinning transitions},
Commun. Math. Phys. {\bf 266} (2006) 1-16.

\bibitem{GT_ap} G.~Giacomin and  F.~L.~Toninelli, \emph{On the irrelevant disorder regime of pinning models}
Ann. Probab. {\bf 37}  (2009)
 1841-1875.

\bibitem{GLT08} G. {Giacomin, H. Lacoin and F. L. Toninelli},
\emph{Marginal relevance of disorder for pinning models},
Commun. Pure Appl. Math. {\bf 63} (2010) 233-265. 


\bibitem{GLT09} {G. Giacomin, H. Lacoin and F.L. Toninelli},
  \textit{Disorder relevance at marginality and critical point shift}   Ann. Inst. H. Poincaré {\bf47} (2011) 148-175.


 \bibitem{Harris} A.~B.~Harris, \emph{ Effect of Random Defects on
      the Critical Behaviour of Ising Models}, J. Phys. C {\bf 7}
    (1974), 1671-1692.


\bibitem{King} J.F.C. Kingman, \textit{Subadditive Ergodic Theory}, Ann. Probab. {\bf 1} (1973) 882-909. 


\bibitem{L} H. Lacoin, \emph{The martingale approach to disorder irrelevance for pinning models}, Elec. Comm. Probab. {\bf 15} (2010) 418-427.

\bibitem{Li} W. Li and K. Kaneko, \emph{Long-range correlation and partial $1/f^{\alpha}$ spectrum in a noncoding DNA sequence},
   Europhys. Lett., {\bf17} (7) (1992) 655-660.



\bibitem{Peng} C.-K. Peng, S. V. Buldyrev, A. L. Goldberger, S. Havlin, F. Sciortino, M. Simons and H. E. Stanley
    \emph{Long-range correlations in nucleotide sequences} Nature {\bf356} (1992), 168-170.




\bibitem{Poisat} J. Poisat, \emph{On quenched and annealed critical curves of random pinning model with finite range correlations} (2011),
 arXiv:0903.3704v3 [math.PR]

\bibitem{T}  F.~L.~Toninelli,
\emph{A replica-coupling approach to disordered pinning models},
Commun. Math. Phys. {\bf 280} (2008), 389-401.


\bibitem{T_AAP} F.~L.~Toninelli, \emph{Disordered pinning models and copolymers: beyond annealed bounds},  Ann. Appl. Probab. {\bf 18} (2008), 1569-1587.


\bibitem{Usatenko} Z. Usatenko and A. Ciach \emph{Critical adsorption of polymers in a medium with long-range correlated quenched disorder},
  Phys. Rev. E {\bf 70} (1) 5, 051801.1-051801.12 (2004).



\bibitem{WeinHalp83} A. Weinrib and B. I. Halperin, \emph{Critical phenomena in systems with long-range-correlated quenched disorder},
   Phys. Rev. B {\bf 27} (1983), 413–427.


\end{thebibliography}
\end{document}